\title{Bounds on special values of $L$-functions of \\ elliptic curves  in an Artin-Schreier family}
\author{ {\large Richard Griffon}  \\
{\normalsize\it Mathematisch Instituut, 
Universiteit Leiden} } 
\date{} 
\theoremstyle{plain}
			\newtheorem{theo}{Theorem}[section]}
\theoremstyle{plain}
			\newtheorem{coro}[theo]{Corollary}}
			\newtheorem{lemm}[theo]{Lemma}}
			\newtheorem{prop}[theo]{Proposition}}
			\newtheorem{itheo}{Theorem}}
			\newtheorem{defi}[theo]{Definition}}
\theoremstyle{plain}
\theoremstyle{plain}
			\newtheorem{rema}[theo]{Remark}}
			\newtheorem*{proof}{Proof}}
			\newcommand{\ProofEnd}{\hfill $\Box$}
			\DeclareMathOperator{\rk}{rank}
			\DeclareMathOperator{\Reg}{Reg}
			\DeclareMathOperator{\ord}{ord}
			\DeclareMathOperator{\Gal}{Gal}
			\DeclareMathOperator{\DEGRE}{deg }
			\renewcommand{\deg}{\DEGRE}
			\newcommand{\R}{\ensuremath{\mathbb{R}}}
			\newcommand{\Q}{\ensuremath{\mathbb{Q}}}
			\newcommand{\C}{\ensuremath{\mathbb{C}}}
			\newcommand{\Z}{\ensuremath{\mathbb{Z}}}
			\newcommand{\G}{\ensuremath{\mathbb{G}}}
			\renewcommand{\P}{\ensuremath{\mathbb{P}}}
			\newcommand{\F}{\ensuremath{\mathbb{F}}}
			\renewcommand{\H}{\ensuremath{\mathrm{H}}}
			\newcommand{\Qbar}{\ensuremath{\bar{\mathbb{Q}}}}
			\newcommand{\Qellbar}{\bar{\Q_\ell}}  
			\newcommand{\Ecal}{\mathcal{E}}
			\newcommand{\Ccal}{\mathcal{C}}
			\newcommand{\e}{\ensuremath{\mathrm{e}}}
			\newcommand{\dd}{\ensuremath{\,\mathrm{d}}}
			\newcommand{\into}{\hookrightarrow}
			\renewcommand{\bar}[1]{\ensuremath{\overline{#1}}}
			\newcommand{\hhat}[1]{\ensuremath{\widehat{#1}}}
			\newcommand{\tam}{\tau}
			\newcommand{\cond}{\mathcal{N}}
			\newcommand{\type}[1]{\mathbf{#1}}
			\newcommand{\tors}{_{\mathrm{tors}}}
			\newcommand{\sep}{^\mathrm{sep}}
			\newcommand{\ie}{\textit{i.e.}{}}
			\newcommand{\cf}{\textit{cf.}{}}
			\DeclareFontFamily{U}{russian}{}
			\DeclareFontShape{U}{russian}{m}{n}
				{ <5><6> wncyr5
				<7><8><9> wncyr7
				<10><10.95><12><14.4><17.28><20.74><24.88> wncyr10 }{}
			\DeclareSymbolFont{Russian}{U}{russian}{m}{n}
			\DeclareSymbolFontAlphabet{\mathcyr}{Russian}
			\let\@math@cyr\mathcyr
			\renewcommand{\mathcyr}[1]{\@math@cyr{\cyracc #1}}
			\newcommand{\sha}{\mathcyr{SH}}
			\newcommand{\ka}{\mathcyr{K}}
			\newcommand{\trivcar}{ \bm{1}}
			\newcommand{\oomega}{\hm{\omega}}
			\newcommand{\shG}{\mathcal{G}}
			\newcommand{\shKl}{\mathcal{K}l}
			\newcommand{\norm}{\mathbf{N}}
			\DeclareMathOperator{\trace}{tr}
			\newcommand{\Trace}{\mathrm{Trace}\,} 
			\newcommand{\mom}{\mathscr{M}}
			\newcommand{\dis}{\mathcal{D}^\ast}
			\newcommand{\totvar}[1]{\mathscr{V}(#1)}
			\newcommand{\frob}{\mathrm{Frob}}
			\newcommand{\Frob}{\mathrm{Fr}}
			\newcommand{\Kl}{\mathrm{Kl}}
			\newcommand{\Kln}{\ka} 
			\newcommand{\kkappa}{\ensuremath{{\mathsf{kl}}}}
			\newcommand{\kkappan}{\ensuremath{\hm{\mathsf{kl}}}}
			\newcommand{\ang}{\theta}
			\newcommand{\angn}{\bm{\theta}}
			\newcommand{\muST}{\mu_{{\rm ST}}} 
			\newcommand{\symm}{\mathrm{Symm}}
			\newcommand{\GL}{\mathrm{GL}}
			\newcommand{\SL}{\mathrm{SL}}
			\newcommand{\SU}{\mathrm{SU}}
			\renewcommand{\ss}{^\mathrm{s.s.}}
\begin{document}
\pagestyle{fancy}

\maketitle 

\noindent\hfill\rule{7cm}{0.5pt}\hfill\phantom{.}

\paragraph{Abstract --} 
We study a certain Artin-Schreier family of elliptic curves over the function field $K=\F_q(t)$. 
We prove an asymptotic estimate on the special values of their $L$-function in terms of the degree of their conductor; 
we show that the special values are, in a sense, ``asymptotically as large as possible''. 
In the paper we also provide an explicit expression for their $L$-function.

The proof of the main result uses this expression and a detailed study of the distribution of character sums related to Kloosterman sums.
Via the BSD conjecture, the main result translates into an analogue of the Brauer--Siegel theorem for these elliptic curves.

\medskip

\noindent{\it Keywords:} 
Elliptic curves over function fields in characteristic $p$, 
Explicit computation of $L$-functions,
Special values of $L$-functions,
BSD conjecture,
Kloosterman sums, 
Sato--Tate distribution. 
 
\smallskip
\noindent{\it 2010 Math. Subj. Classification:}  
11G05, 
11G40, 
11M38, 
11F67, 
11L05, 
11J20. 
 
\smallskip
\noindent{\it E-mail:} \href{mailto:r.m.m.griffon@math.leidenuniv.nl}{{r.m.m.griffon@math.leidenuniv.nl}}
\medskip

\noindent\hfill\rule{7cm}{0.5pt}\hfill\phantom{.}

\section*{Introduction}
\pdfbookmark[0]{Introduction}{Introduction} 
\addcontentsline{toc}{section}{Introduction}

\setcounter{section}{0}

\paragraph{}
Let $\F_q$ be a finite field of odd characteristic $p$ and $K=\F_q(t)$.
Consider a nonisotrivial elliptic curve~$E$ defined over $K$, and  its associated $L$-function%
\footnote{Since the base field $K$ is fixed  and all the invariants of $E$ we consider are relative to $K$, we drop the dependency on $K$ from the notations.} $L(E, T)$.
\emph{Via} a cohomological interpretation, Grothendieck has proved that, 
even though $L(E, T)$ is \emph{a priori} defined as a formal power series in $T$, it is actually a polynomial with integral coefficients, whose degree we denote by $b(E)$.
Moreover, $L(E, T)$ satisfies the expected functional equation relating $L(E, T)$ to $L(E, 1/q^2T)$. 

Define $\rho(E)$ to be the order of vanishing of $L(E, T)$ at the central point $T=q^{-1}$ 
 and the \emph{special value} of $L(E, T)$ by $L^\ast(E, 1):= \lim_{T\to q^{-1}} (1-qT)^{-\rho(E)} \cdot L(E, T)$.
These invariants both appear in the conjecture of Birch and Swinnerton-Dyer\footnote{Hereafter abbreviated as BSD} through which they are related to ``arithmetic'' invariants of $E$. 


We will be interested in comparing the size of the special value 
$L^\ast(E, 1)$ to the degree $b(E)$ of the $L$-function.
It is relatively straightforward to prove  that 
\begin{equation}\label{ieq.trivbounds}
-1 \leq \frac{\log L^\ast(E, 1)}{\log\left(q^{b(E)}\right)} \leq 0 + o(1) \qquad (\text{as } b(E)\to\infty),
\end{equation}
and it seems natural to ask about the optimality of such bounds.
 In other words, given a family $\mathscr{E}$ of nonisotrivial elliptic curves $E$ over $K$ 
 with $b(E)\to\infty$, we investigate the asymptotic behaviour of the ratio 
${\log(L^\ast(E, 1))}/{\log\left(q^{b(E)}\right)}$ as $E$ runs through $\mathscr{E}$. 
Does this ratio have a limit? If so, what is this limit?

These questions are still wide open and, as far as the author knows, they have only been settled for six special families $\mathscr{E}$ (see \cite[Thm.\,7.12]{HP15}, \cite[Coro.\,5.1]{Griffon_Legendre}, \cite[Thm.\,4.2]{Griffon_Hessian} and \cite[Thm.\,9]{Griffon_PHD}).  
These six examples are known as ``Kummer families'' of elliptic curves: one obtains them by pulling-back an elliptic curve $E_1/K$ by the map $t\mapsto t^d$ for larger and larger integers $d$ which are coprime to $q$. 

In those cases, the ratio in \eqref{ieq.trivbounds} does have a limit, and this limit is $0$.


\paragraph{}
In this article, we answer the two questions above for an ``Artin--Schreier family'' of elliptic curves over~$K$. 
More precisely, we prove 

\begin{itheo}\label{itheo.main}
Let $\F_q$ be a finite field of odd characteristic $p$  and $K=\F_q(t)$. 
Fix $\gamma\in\F_q^\times$ and, for all  integers $a\geq 1$, let $E_{a,\gamma}$ be the elliptic curve over $K$ given by the affine Weierstrass model
\begin{equation}\notag{} 
E_{a,\gamma}:\quad y^2=x\left(x+16\gamma \right)\left(x+\wp_a(t)^2\right), 
\quad \text{ with } \wp_a(t) =t^{q^a} -t\in\F_q[t]. 
\end{equation}
Then, as $a\to\infty$, the limit below exists and is $0$: 
\begin{equation}\label{ieq.main}
\lim_{a\to\infty} 
\frac{\log L^\ast(E_{a, \gamma}, 1)}{\log\left(q^{b(E_{a, \gamma})}\right)} = 0.
\end{equation}
\end{itheo}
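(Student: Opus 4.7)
The plan follows the strategy pioneered in the Kummer-family results cited above: first, compute $L(E_{a,\gamma}, T)$ explicitly enough to identify its inverse roots with Frobenius eigenvalues of a well-understood family of exponential sums; next, rewrite $\log L^\ast(E_{a,\gamma}, 1)$ as a sum over the associated Frobenius angles of an explicit function; finally, extract the limit $0$ from a quantitative equidistribution statement. Here the exponential sums will be Kloosterman sums, because the Artin--Schreier polynomial $\wp_a(t) = t^{q^a} - t$ is $\F_q$-linear of degree $q^a$ and naturally produces additive character sums that Fourier inversion converts into Kloosterman sums.

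For the explicit $L$-function, note that $E_{a,\gamma}$ has full $2$-torsion over $K$ and its model $y^2 = x(x+16\gamma)(x+\wp_a(t)^2)$ is of Legendre shape. I would compute local Euler factors place-by-place, setting aside a short list of bad-reduction places. At a good place $v$, the trace of Frobenius expands as a quadratic character sum over $\F_{q_v}$, which after a change of variable becomes a sum involving the additive character $\psi$ of $\F_{q_v}$ evaluated at expressions containing $\wp_a(t)$; Fourier inversion, using that $\wp_a$ is $\F_q$-linear and surjective onto an $\F_q$-subspace of $\F_{q_v}$, turns each such sum into a (twisted) Kloosterman sum $\Kl_2(\cdot; \F_{q_v})$. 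Globally, I would exhibit $L(E_{a, \gamma}, T)$ as a product, indexed by $\F_q^\times$-orbits on a parameter set of size $\asymp q^a$, of twisted Kloosterman $L$-functions on $\G_m$, so that every inverse root has the form $q\,e^{\pm i\theta}$ with $\theta$ a Kloosterman angle. This also gives $b(E_{a,\gamma}) \sim C \cdot q^a$ for some explicit constant $C > 0$.

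Factoring $L(E_{a,\gamma},T) = \prod_j(1 - \alpha_j T)$, evaluating at $T = 1/q$, and absorbing the vanishing order $\rho(E_{a,\gamma})$ into the $\ast$ turn the special value into
\[
\log L^\ast(E_{a,\gamma}, 1) = \sum_{\alpha_j \ne q} \log|1 - \alpha_j/q| + O(\log q),
\]
the $O(\log q)$ term accounting for bad-place factors. Writing $\alpha_j = q\,e^{i\theta_j}$ and dividing by $b(E_{a,\gamma}) \log q$, the theorem reduces to showing that the mean of $\log|2 \sin(\theta_j/2)|$ over the Kloosterman angles identified above tends to $0$ as $a \to \infty$. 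By Katz's Sato--Tate theorem, the $\{\theta_j\}$ equidistribute in $[0, \pi]$ with respect to $\muST$, and $\int_0^\pi \log|2\sin(\theta/2)|\,\dd\muST(\theta) = 0$ is a classical identity, so the desired limit would follow immediately were $\log|2\sin(\theta/2)|$ continuous.

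It is not, of course: it has an integrable logarithmic singularity at $\theta = 0$, and this is the main obstacle. One must control, quantitatively, how many $\theta_j$ can cluster near $0$. I would establish a discrepancy bound of the form $\#\{j: |\theta_j| \leq \varepsilon\} \ll \varepsilon \cdot b(E_{a,\gamma}) + b(E_{a,\gamma})^{1-\delta}$ for some $\delta > 0$, via the Erd\H{o}s--Tur\'an--Koksma inequality applied to moment estimates $\sum_j e^{ik\theta_j}$; those estimates follow from Deligne's Riemann Hypothesis applied to the symmetric-power $L$-functions of the Kloosterman sheaf, whose geometric monodromy group was computed by Katz to be $\SL_2$. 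Truncating the sum at a threshold $\varepsilon(a) \to 0$ chosen slowly enough, the near-singularity piece is $o(b(E_{a,\gamma}))$ by the discrepancy bound and the integrability of $\log|2\sin(\theta/2)|$ against $\muST$, while the truncated function is continuous and handled by pure Sato--Tate. The bookkeeping in the first two steps is nontrivial but essentially routine; the genuine difficulty is precisely this quantitative control of Kloosterman angles near $0$.
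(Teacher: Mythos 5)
Your overall strategy---compute $L(E_{a,\gamma},T)$ explicitly by point-counting and identify its inverse roots with Kloosterman Frobenius eigenvalues, express $\log L^\ast(E_{a,\gamma},1)$ as a sum of $\log\sin$ over Kloosterman angles, then feed this into a quantitative Sato--Tate equidistribution statement---is essentially the same as the paper's. The explicit $L$-function, the factored expression for the special value, and the effective equidistribution via an Erd\H{o}s--Tur\'an--Koksma-type inequality all appear there, and your sketch of each of these ingredients is correct in spirit.

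However, your proposed handling of the singularity at $\theta=0$ has a genuine gap, and it is precisely at the step you flag as ``the genuine difficulty.'' You propose to control the near-singularity piece $\sum_{|\theta_j|\leq\varepsilon}\log|2\sin(\theta_j/2)|$ using a discrepancy bound $\#\{j:|\theta_j|\leq\varepsilon\}\ll\varepsilon\,b+b^{1-\delta}$ together with the integrability of $\log\sin$ against $\muST$. This cannot close the argument: $\log\sin$ is unbounded near the origin, and a discrepancy bound gives \emph{no} lower bound on how small the angles can be. Concretely, a configuration in which one single angle equals $\exp(-b^2)$ while the rest are perfectly equidistributed satisfies every discrepancy bound (the discrepancy changes by at most $1/b$) and every moment estimate (each $\sum_j \e^{ik\theta_j}$ is perturbed by $O(1)$), yet that single angle contributes $\asymp -b^2$ to $\sum_j\log\sin\theta_j$ and destroys the desired $o(b)$ estimate. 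What is missing is a quantitative lower bound on $\min_j\theta_j$ (and on $\pi-\max_j\theta_j$). The paper supplies exactly this, in Theorem~\ref{theo.minangle} and Corollary~\ref{coro.minangle}: $\min\{\theta_v,\pi-\theta_v\}\geq (q^a)^{-c_p}$ with $c_p=2(p-1)$. The proof is a diophantine-approximation argument via Liouville's inequality, exploiting the fact that $\e^{i\theta_v}$ is an algebraic number of degree at most $2(p-1)$ and of height at most $\tfrac12\log|\F_v|$; this is an arithmetic input that does not follow from any equidistribution or moment statement. In the paper's proof of Theorem~\ref{theo.ST.ext}, this small-angle bound is what makes the term $T_1=\int|w-w_\varepsilon|\dd\nu_a$ vanish identically for $\varepsilon<(q^a)^{-c_p}$, after which the discrepancy bound takes care of the rest. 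Without such an input, your argument only recovers the trivial lower bound $\log L^\ast(E_{a,\gamma},1)\geq -b(E_{a,\gamma})\log q$.
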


The name of ``Artin--Schreier family'' stems from its construction: starting with the elliptic curve $E_{\gamma}/K$ given by ${y^2=x(x+16\gamma)(x+t^2)}$, one obtains $E_{a, \gamma}$ by pulling back $E_{\gamma}$ by the Artin--Schreier map $t\mapsto \wp_a(t)$ for any $a\geq1$.
This family of elliptic curves $\{E_{a, \gamma}\}_{a\geq 1}$  was previously studied in 
\cite{UlmerPries} where, among others, the authors prove that $E_{a, \gamma}$  satisfies the BSD conjecture.
Following \cite[\S7.3]{UlmerPries}, we note the ressemblance between $E_{a, \gamma}$ and a Legendre elliptic curve. 
We refer to Theorem \ref{theo.bnd.spval} for a more quantitative version of \eqref{ieq.main}.

Once again, prior to Theorem \ref{itheo.main}, the only known examples of families of elliptic curves 
exhibiting a behaviour such as \eqref{ieq.main}
were Kummer families.

\paragraph{} 
From Theorem \ref{itheo.main} and from the BSD conjecture, we will deduce (see Theorem \ref{theo.BS}): 
\begin{itheo}\label{itheo.BS}
Let $\F_q$ be a finite field of odd characteristic and $K=\F_q(t)$. 
For any $\gamma\in\F_q^\times$ and any integer~$a\geq 1$, the Tate--Shafarevich group $\sha(E_{a, \gamma})$ is finite.
Furthermore, one has
\[\lim_{a\to\infty} \frac{ \log\big(|\sha(E_{a, \gamma})|\cdot\Reg(E_{a, \gamma}) \big)}{\log H(E_{a, \gamma})} = 1.\]
\end{itheo}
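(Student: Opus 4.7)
The first assertion, finiteness of $\sha(E_{a,\gamma})$, is immediate: Ulmer--Pries \cite{UlmerPries} establish the BSD conjecture for every curve in the family. For the Brauer--Siegel identity, the plan is to combine BSD with Theorem \ref{itheo.main}. Schematically, BSD yields an identity (up to an explicit bounded factor coming from normalisations) of the shape
\begin{equation*}
L^\ast(E_{a,\gamma}, 1)\cdot |E_{a,\gamma}(K)_{\mathrm{tors}}|^2 \cdot H(E_{a,\gamma}) \;=\; |\sha(E_{a,\gamma})|\cdot\Reg(E_{a,\gamma})\cdot\tam(E_{a,\gamma}),
\end{equation*}
where $\tam(E_{a,\gamma}) = \prod_v c_v(E_{a,\gamma})$ is the Tamagawa product and $H(E_{a,\gamma})$ the exponential differential height. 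Taking logarithms, dividing by $\log H(E_{a,\gamma})$, and rearranging gives
\begin{equation*}
\frac{\log\bigl(|\sha(E_{a,\gamma})|\cdot\Reg(E_{a,\gamma})\bigr)}{\log H(E_{a,\gamma})} \;=\; 1 \;+\; \frac{\log L^\ast(E_{a,\gamma},1) + 2\log|E_{a,\gamma}(K)_{\mathrm{tors}}| - \log \tam(E_{a,\gamma})}{\log H(E_{a,\gamma})} + o(1).
\end{equation*}
It thus suffices to show that the three summands in the numerator are each $o(\log H(E_{a,\gamma}))$.

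For the first summand, one needs $b(E_{a,\gamma})$ and $\log_q H(E_{a,\gamma})$ to grow at the same rate. A direct computation on the displayed Weierstrass model shows that $\deg \Delta_{\min}(E_{a,\gamma}) = \Theta(q^a)$: the discriminant is supported at the zeros of $\wp_a(t)$, at the zeros of $\wp_a(t)^2-16\gamma$, and at the place at infinity, each contributing a bounded multiple of $q^a = \deg \wp_a$. A parallel Ogg-formula computation of the conductor yields $b(E_{a,\gamma}) = \Theta(q^a)$, so Theorem \ref{itheo.main} translates to $\log L^\ast(E_{a,\gamma}, 1) = o(\log H(E_{a,\gamma}))$. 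The torsion summand is handled by noting that the factored Weierstrass model already exhibits the full $2$-torsion of $E_{a,\gamma}$ over $K$, and that any additional rational torsion point would inject into the Mordell--Weil group of $E_{a,\gamma}$ reduced at any place of good reduction, a bounded finite group; hence $|E_{a,\gamma}(K)_{\mathrm{tors}}| = O(1)$ uniformly in $a$.

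The main obstacle is then the Tamagawa contribution $\log\tam(E_{a,\gamma})$. I would apply Tate's algorithm to the displayed model to show that, away from a controlled additive locus (where wild ramification of the Artin--Schreier map intervenes), the bad places are multiplicative of type $I_{n_v}$ with $n_v$ explicitly computable from the Taylor expansions of $\wp_a(t)$ and of $\wp_a(t)^2-16\gamma$, and uniformly bounded at most bad places. At a non-split multiplicative place one has $c_v \in \{1,2\}$, contributing negligibly; at a split place $c_v = n_v$. Combining the crude bound $c_v \leq \ord_v(\Delta_{\min})$ with the concavity inequality $\sum_v \log c_v \leq (\#\text{bad places})\cdot\log(\max_v c_v)$ and a split/non-split analysis — the distribution of split vs.\ non-split reductions being governed, as in the proof of Theorem \ref{itheo.main}, by character-sum equidistribution — I expect to derive $\log\tam(E_{a,\gamma}) = o(q^a\log q) = o(\log H(E_{a,\gamma}))$. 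Once this last estimate is in place, all three correction terms vanish in the limit and the stated Brauer--Siegel identity equals $1$.
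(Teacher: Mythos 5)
Your overall skeleton matches the paper exactly: Pries--Ulmer's BSD result gives finiteness of $\sha$ and the formula~\eqref{eq.BSD}; taking logarithms reduces the claim to showing $\log L^\ast$, $\log|E(K)_{\mathrm{tors}}|$, and $\log\tam(E_{a,\gamma})$ are each $o(\log H)$; the first follows from Theorem~\ref{itheo.main} together with the computations in \S\ref{subsec.invariants} giving $b(E_{a,\gamma}) \asymp q^a \asymp \log_q H(E_{a,\gamma})$. Where the paper simply cites Proposition~\ref{prop.dioph.bnd} — a Merel-type uniform torsion bound and the Hindry--Pacheco estimate $\log\tam(E)=o(\log H(E))$ — you propose to rederive these for the specific family. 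That is a reasonable alternative route, but as you have sketched it the Tamagawa step contains a genuine gap.

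From Proposition~\ref{prop.badred} the types are $I_4$, $I_2$ at finite bad places (so $c_v\leq 4$) and $I_{4q^a}$ at $\infty$ (so $c_\infty\leq 4q^a$). The inequality you invoke, $\sum_v\log c_v\leq(\#\text{bad places})\cdot\log(\max_v c_v)$, would plug in $\max_v c_v=4q^a$ together with a naive bad-place count of order $\deg\Delta_{\min}=12q^a$, producing the useless bound $\log\tam=O(q^a\cdot a\log q)\gg\log H$. Two things rescue the estimate, and both have to be made explicit. First, separate $\infty$: it alone contributes $\log c_\infty\leq\log(4q^a)=O(a\log q)$, negligible. Second — and this is the point you cannot get from the discriminant degree alone — the \emph{number} of finite bad places is $O(q^a/a)$, not $O(q^a)$. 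For $\wp_a(t)$ this is immediate: it factors exactly over the closed points of degree dividing $a$, of which there are $|P_q(a)|+1\asymp q^a/a$ by Lemma~\ref{lemm.estimates}. For $\wp_a(t)^2-16\gamma$ (squarefree of degree $2q^a$), count factors of degree $\leq D$ (at most $O(q^D)$, since that bounds the total number of monic irreducibles of degree $\leq D$) and of degree $>D$ (at most $2q^a/D$), then take $D\approx a$. With both observations, $\log\tam\leq\log 4\cdot O(q^a/a)+O(a\log q)=o(q^a\log q)=o(\log H)$. Note that the split/non-split analysis you suggest is unnecessary here: since $c_v\leq 4$ at every finite bad place regardless of whether reduction is split, no equidistribution input is needed. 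Your torsion argument is also too quick as stated: reducing at a good place $v$ bounds torsion by $|\widetilde{E}_v(\F_v)|\approx q^{d_v}$, which is not $O(1)$ unless $d_v$ is bounded, and for this family \emph{every} place of degree $1$ is bad (all of $\F_q\cup\{\infty\}$ lies in the bad locus); the paper's citation of the uniform Merel-type bound avoids this issue.
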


Following \cite{HP15}, we view this result as an analogue of the Brauer--Siegel theorem for the elliptic curves~$\{E_{a, \gamma}\}_{a\geq 1}$.
We further comment on this result in section \ref{sec.BS}. 

\paragraph{}
Let us give an outline of the proof of Theorem \ref{itheo.main} as we describe the structure of the paper and state the other results contained in it.
In section \ref{sec.family}, we start by reviewing the construction of the elliptic curves $E_{a, \gamma}$ and by computing some of their invariants. 
We also recall the definition of their $L$-function and state the BSD conjecture (proved by Pries and Ulmer for $E_{a, \gamma}$, \cf{} \cite{UlmerPries}).

In the two following sections, we compute the $L$-function of $E_{a, \gamma}$; 
the relevant objects are introduced in section \ref{sec.prelim}. 
In particular, a central role is played by angles of some Kloosterman sums. 
For the purpose of this introduction  let us only say that, to any place $v\neq 0, \infty$ of $K$, we will attach a character sum $\Kln_\gamma(v)$.  
The sum $\Kln_\gamma(v)$ is a real number satisfying $|\Kln_\gamma(v)|< 2q^{d_v/2}$ where $d_v$ is the degree of $v$. 
Hence there exists an angle $\theta_v\in (0, \pi)$ such that $\Kln_\gamma(v) = 2q^{d_v/2}\cdot\cos\theta_v$. 
The reader is referred to \S\ref{subsec.kloos.norm} and \S\ref{subsec.anglesnorm} for precise definitions.
Section \ref{sec.Lfunc} is devoted to the calculation 
of the $L$-function itself, which 
results in the following expression: 

\begin{itheo}\label{itheo.Lfunc} For all integers $a\geq 1$, we denote by $P_q(a)$ the set of places $v\neq 0, \infty$ of $K$ whose degree $d_v$ divides $a$. 
Then, for all $\gamma\in\F_q^\times$, the $L$-function of $E_{a, \gamma}$ is given by
\begin{equation}\label{ieq.Lfunc}
L(E_{a, \gamma}, T)
= \prod_{v\in P_q(a)} \left(1-(qT)^{d_v}\right)
\left(1-\e^{2i\theta_v}(qT)^{d_v}\right)
\left(1-\e^{-2i\theta_v}(qT)^{d_v}\right),
\end{equation}
where,  for all $v\in P_q(a)$,  $\theta_v\in(0,\pi)$ is as above (see \S\ref{subsec.kloos.norm} and \S\ref{subsec.anglesnorm} for a precise definition).
\end{itheo}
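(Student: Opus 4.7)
The plan is to decompose $L(E_{a,\gamma},T)$ via the Artin--Schreier cover $\pi_a\colon \mathbb{A}^1\to \mathbb{A}^1$, $t\mapsto \wp_a(t)$, and to evaluate the resulting pieces. Over $\bar{\F_q}$, $\pi_a$ is a Galois étale cover of $\mathbb{A}^1$ with group $(\F_{q^a},+)$; writing $\shF_\gamma$ for the middle-extension $\ell$-adic sheaf on $\P^1_{\F_q}$ attached to $E_\gamma$, and using $E_{a,\gamma}=\pi_a^*E_\gamma$, the projection formula yields
\[ \pi_{a*}\pi_a^*\shF_\gamma\;\simeq\;\bigoplus_{\psi}\shF_\gamma\otimes\shG_\psi, \]
where $\psi$ runs over the $\Qellbar^\times$-valued additive characters of $\F_{q^a}$ and $\shG_\psi$ is the corresponding Artin--Schreier sheaf on $\mathbb{A}^1$. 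Taking $H^1$ and grouping the $\psi$'s into $\Gal(\bar{\F_q}/\F_q)$-orbits---the trivial orbit $\{\psi=0\}$ alone, and the non-trivial orbits in natural bijection with the places $v\in P_q(a)$---gives a factorisation
\[ L(E_{a,\gamma},T)\;=\;L(E_\gamma,T)\cdot\prod_{v\in P_q(a)}L_v^{\mathrm{AS}}(T), \]
where each $L_v^{\mathrm{AS}}(T)\in\Z[T^{d_v}]$ collects the contributions of one non-trivial orbit.

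Two separate computations then finish the proof. First, $L(E_\gamma,T)=1$: the cubic $x(x+16\gamma)(x+t^2)$ has two coinciding roots precisely at $t=0$ and $t=\pm 4\sqrt{\gamma}$, while the substitution $s=1/t$ shows that $E_\gamma$ also has multiplicative reduction at $\infty$. Hence $E_\gamma/K$ has multiplicative reduction at exactly four geometric places and good reduction elsewhere, so $\deg\cond(E_\gamma)=4$, and the Grothendieck--Ogg--Shafarevich formula on $\P^1_{\F_q}$ yields $\deg L(E_\gamma,T)=\deg\cond(E_\gamma)-4=0$, forcing $L(E_\gamma,T)\equiv 1$. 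Second, for $v\in P_q(a)$ and any representative $\lambda_0\in\F_{q^{d_v}}$ of the corresponding Galois orbit,
\[ L_v^{\mathrm{AS}}(T)\;=\;\det\!\left(1-\Frob_{\F_{q^{d_v}}}\,T^{d_v}\,\bigm|\,H^1\bigl(\mathbb{A}^1_{\bar{\F_q}},\,\shF_\gamma\otimes\shG_{\psi_{\lambda_0}}\bigr)\right). \]
An Euler--Poincaré count on $\P^1$---$\shF_\gamma$ has rank $2$ and is tame at its four multiplicative-reduction places with $1$-dimensional invariants there, while $\shG_{\psi_{\lambda_0}}$ has rank $1$ and Swan conductor $1$ at $\infty$---gives $\chi(\P^1_{\bar{\F_q}},j_*(\shF_\gamma\otimes\shG_{\psi_{\lambda_0}}))=-3$. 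So the determinant is a cubic in $T^{d_v}$ whose three Frobenius eigenvalues have absolute value $q^{d_v}$ by Deligne's purity. The Grothendieck--Lefschetz trace formula then expresses the traces of Frobenius on $H^1$ as character sums of the form $\sum_{s}\psi_{\lambda_0}(s)\cdot a_s(E_\gamma)$, where $a_s(E_\gamma)=-\sum_x\chi_2\bigl(x(x+16\gamma)(x+s^2)\bigr)$ is the Frobenius trace on the fibre $E_\gamma|_s$ (with $\chi_2$ the quadratic character). Swapping the order of summation and applying a suitable change of variable folds this double sum into the Kloosterman sum $\Kln_\gamma(v)$ from Section~\ref{sec.prelim}, producing two of the three eigenvalues as $q^{d_v}\e^{\pm 2i\ang_v}$; the remaining tame eigenvalue is $q^{d_v}$, coming from the unipotent monodromy of $\shF_\gamma$ at $\infty$. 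The claimed factorisation follows.

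The main technical hurdle will be the last identification: explicitly matching the character sum arising from the trace formula with $\Kln_\gamma(v)$ as defined in Section~\ref{sec.prelim}, both in shape and in normalisation, and pinning down the tame eigenvalue as exactly $q^{d_v}$ (rather than $-q^{d_v}$ or some other root-of-unity multiple). Concretely, this should rest on a reduction-to-Legendre calculation: one exhibits $E_\gamma$ as a quadratic twist of the Legendre family $Y^2=X(X-1)(X-\mu)$ with parameter $\mu(s)=s^2/(16\gamma)$, so that $\sum_{s}\psi_{\lambda_0}(s)\,a_s(E_\gamma)$ reduces, after elementary manipulations of hypergeometric character sums, to a classical Kloosterman sum of length $q^{d_v}$---and it is here that the normalisations fixed in \S\ref{subsec.kloos.norm} and \S\ref{subsec.anglesnorm} must be respected exactly.
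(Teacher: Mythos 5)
Your proposal takes a genuinely different route from the paper. The paper proves Theorem~\ref{theo.Lfunc} entirely by elementary manipulation of character sums: starting from \eqref{eq.practical.Lfunc}, it uses Lemma~\ref{lemm.solcount} to unfold the Artin--Schreier condition into additive characters, and the whole weight of the argument rests on the explicit identity $M_\F(\beta,\gamma)=\Kl_\F(\psi;\gamma\beta^2)^2-|\F|$ of Proposition~\ref{prop.id.charsum} (proved via Sali\'e's formula and the auxiliary point count of Lemma~\ref{lemm.id.prelim}). You instead propose a sheaf-theoretic decomposition: $\pi_{a*}\pi_a^*\shF_\gamma\simeq\bigoplus_\psi\shF_\gamma\otimes\shG_\psi$, Galois orbits of the $\psi$'s indexed by $P_q(a)$, $L(E_\gamma,T)=1$ by a Grothendieck--Ogg--Shafarevich degree count, and a rank-3 Euler--Poincar\'e computation for each nontrivial orbit. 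Your degree count for $L(E_\gamma,T)$ is correct (the conductor has degree $1+2+1=4$, so the $L$-function has degree $0$), and the Euler characteristic $-3$ for $j_*(\shF_\gamma\otimes\shG_{\psi_{\lambda_0}})$ on $\P^1$ checks out (drop $1$ at each of $0,\pm4\sqrt\gamma$; drop $2$ and Swan $2$ at $\infty$). This structural route has the advantage of explaining \emph{a priori} why the answer factors over $P_q(a)$ into cubic Euler factors; the paper's route has the advantage of being self-contained and fully explicit.

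However, two points in your sketch need to be corrected. First, your explanation of the Frobenius eigenvalue $q^{d_v}$ --- ``the remaining tame eigenvalue is $q^{d_v}$, coming from the unipotent monodromy of $\shF_\gamma$ at $\infty$'' --- cannot be right as stated: tensoring $\shF_\gamma$ with the Artin--Schreier sheaf $\shG_{\psi_{\lambda_0}}$ makes the local representation at $\infty$ totally wild, so it has no inertia-invariants at $\infty$ and contributes nothing tame there. The $q^{d_v}$ eigenvalue is instead a reflection of the relation $\kkappan_\gamma(v)\cdot\kkappan'_\gamma(v)=q^{d_v}$ (i.e.\ of the determinant of the Kloosterman sheaf, or of the ``anti-diagonal'' piece of $\shKl\otimes\shKl$), which the paper records via the algebraic identity $\Kl^2-q^{d_v}=\kkappa^2+\kkappa'^2+q^{d_v}$. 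Second, you say the trace-formula sum ``folds \dots into the Kloosterman sum $\Kln_\gamma(v)$''; what it must actually fold into is $\Kln_\gamma(v)^2-q^{d_v}$, and establishing this is not a routine normalization check but \emph{is} Proposition~\ref{prop.id.charsum}, the technical core of the proof. Your proposal correctly identifies this as the main hurdle but leaves it entirely open; since all the structural steps around it (the degree of $L(E_\gamma,T)$, the Euler--Poincar\'e count, Deligne purity) only tell you the Euler factor is a cubic with roots of absolute value $q^{d_v}$, the identification with $\{q^{d_v},\kkappan_\gamma(v)^2,\kkappan'_\gamma(v)^2\}$ requires exactly the character-sum computation that the paper carries out in \S\ref{subsec.id.charsum}. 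So the cohomological scaffolding repackages but does not replace the key lemma.
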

This result is proved by a ``point-counting'' argument, directly from the definition of $L(E_{a, \gamma}, T)$, through manipulations of character sums over finite fields. 
Given the paucity of tables of $L$-functions of elliptic curves over $K$ of large conductor, such an explicit expression of $L(E_{a, \gamma}, T)$ can be of independent interest.

As a by-product, Theorem~\ref{itheo.Lfunc} yields a closed formula for the analytic rank $\ord_{T=q^{-1}}L(E_{a, \gamma}, T)$. 
Using that the BSD conjecture is proven for $E_{a, \gamma}$, we recover a result of \cite{UlmerPries} stating that the ranks of $E_{a, \gamma}(K)$ are unbounded as $a\to\infty$; 
more precisely, we show in Corollary \ref{coro.unbnd.rk} that:
$\rk E_{a, \gamma}(K) \gg_q {q^a}\big/{a}$. 

From Theorem \ref{itheo.Lfunc}, we also derive (Proposition \ref{prop.anal.rk} and \eqref{eq.expr.spval.alt}) an explicit expression for the special value $L^\ast(E_{a, \gamma}, 1)$. 
In the notations of \eqref{ieq.Lfunc}, we obtain an expression of the shape
\begin{equation}\label{ieq.est.spval}
\frac{\log L^\ast(E_{a, \gamma}, 1)}{b(E_{a, \gamma})} 
= \frac{\log\left(\substack{\text{positive} \\ \text{integer}}\right)}{b(E_{a, \gamma})}   + \frac{1}{b(E_{a, \gamma})} \cdot\sum_{v\in P_q(a)} \log \sin^2\theta_v.
\end{equation} 
Estimating the first term is straightforward: it is $o(1)$ as $a\to\infty$ and thus does not play any role in the asymptotics. 
In order to prove the limit in Theorem \ref{itheo.main}, we have  to investigate the behaviour of the second term: specifically, we need to show that it is $o(1)$ too. 
The size of this term visibly depends on how the angles $\{\theta_v\}_{v\in P_q(a)}$ distribute in the interval $[0,\pi]$.
Since $t\mapsto \log \sin^2 t$ tends to $-\infty$ 
at $0$ and $\pi$, this size also depends on how close the angles 
can  be to the endpoints of $(0,\pi)$.

Therefore we devote sections \ref{sec.small.angles} and \ref{sec.distrib} to studying the distribution of the angles $\{\theta_v\}_{v\in P_q(a)}$  in more detail. 
The two main results in these sections can be stated as follows: 

\begin{itheo}\label{itheo.angles}
 Notations being as above,
\begin{enumerate}[(i]
\item\label{itheo.ST.eff}- Theorem \ref{theo.ST.eff})
For all continuously differentiable functions $g:[0, \pi]\to\C$, 
\begin{equation}\notag{}
\left|\frac{1}{|P_q(a)|}\sum_{v\in P_q(a)} g(\theta_v) 
- \frac{2}{\pi}\int_0^\pi g(t) \cdot \sin^2 t\dd t\right|
\ll_q \frac{a^{1/2}}{q^{a/4}}\cdot \int_0^\pi |g'(t)|\dd t.
\end{equation}
\item\label{itheo.minangle}- Corollary \ref{coro.minangle})
There is a constant $c>0$ such that $\min\{\theta_v, \pi-\theta_v\} \geq (q^a)^{-c}$ for all $v\in P_q(a)$. 
\end{enumerate}
\end{itheo}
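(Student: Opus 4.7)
The plan is to handle the two parts separately: (i) via a discrepancy-style inequality for equidistribution against the Sato--Tate measure combined with Deligne's equidistribution theorem applied to symmetric powers of the Kloosterman sheaf $\shKl_\gamma$, and (ii) via an arithmetic argument exploiting that Kloosterman sums are algebraic integers in a fixed cyclotomic ring.

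For (i), I would first recall that the Sato--Tate measure $\muST = (2/\pi)\sin^2 t\dd t$ on $[0,\pi]$ has the Chebyshev polynomials $U_n(\cos\theta)=\sin((n+1)\theta)/\sin\theta$ as an orthonormal basis. A Koksma--Hlawka-type inequality adapted to $\muST$ then shows that, for any $C^1$ function $g:[0,\pi]\to\C$ and any integer $N\geq 1$,
\[
\left|\frac{1}{|P_q(a)|}\sum_{v\in P_q(a)} g(\theta_v) - \int_0^\pi g\dd\muST\right|
\ll \left(\int_0^\pi |g'(t)|\dd t\right)\left(\frac{1}{N}+\sum_{n=1}^N\frac{|W_n(a)|}{n\,|P_q(a)|}\right),
\]
where $W_n(a):=\sum_{v\in P_q(a)} U_n(\cos\theta_v)$. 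Via the Grothendieck--Lefschetz trace formula, each $W_n(a)$ can be expressed in terms of the Frobenius trace on $\symm^n\shKl_\gamma$, a geometrically irreducible lisse $\ell$-adic sheaf on an open subset of $\G_m$, pure of weight $n$. Deligne's Riemann hypothesis together with Katz's bounds on the Euler characteristic of such symmetric powers then give the uniform bound $|W_n(a)|\ll (n+1)\,q^{a/2}$. Since $|P_q(a)|\asymp q^a/a$, the weighted Weyl sum in the displayed inequality is $\ll N\cdot a\cdot q^{-a/2}$, and the optimal choice $N\asymp q^{a/4}/\sqrt{a}$ produces the announced bound.

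For (ii), each $\Kln_\gamma(v)$ is an algebraic integer lying in the cyclotomic ring $\Z[\zeta_p]$, where $\zeta_p$ is a primitive $p$-th root of unity. Moreover, the extremal values $\pm 2q^{d_v/2}$ cannot be attained by a non-degenerate Kloosterman sum, so the quantities
\[
\alpha_v^\pm \,:=\, 2q^{d_v/2}\mp\Kln_\gamma(v)
\]
(equal respectively to $4q^{d_v/2}\sin^2(\theta_v/2)$ and $4q^{d_v/2}\cos^2(\theta_v/2)$) are nonzero elements of $\Z[\zeta_p]$. Their absolute norms to $\Z$ are therefore nonzero rational integers, of absolute value at least $1$. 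Since each of the $p-1$ Galois conjugates of $\alpha_v^\pm$ has modulus at most $4q^{d_v/2}$, this forces $|\alpha_v^\pm|\geq(4q^{d_v/2})^{-(p-2)}\gg q^{-cd_v}\geq q^{-ca}$ for some constant $c=c(p,q)>0$, which translates directly into the lower bound on $\min\{\theta_v,\pi-\theta_v\}$ claimed in part (ii).

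The hard part will be establishing the uniform-in-$n$ bound $|W_n(a)|\ll(n+1)q^{a/2}$: while the dependence on $a$ follows at once from Deligne's theorem applied to $\symm^n\shKl_\gamma$, controlling the implicit constant linearly in $n$ requires careful tracking of the local monodromy of $\shKl_\gamma$ at $0$ and $\infty$, together with a proof that $\symm^n\shKl_\gamma$ admits neither non-trivial geometric invariants nor coinvariants for any $n\geq 1$. A secondary (but necessary) subtlety is justifying that $\Kln_\gamma(v)\neq\pm 2q^{d_v/2}$ for every $v\in P_q(a)$, which must be explicitly verified from the local structure of $\shKl_\gamma$ before the argument in (ii) becomes rigorous.
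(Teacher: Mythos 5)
Your overall plan for both parts is sound and structurally parallel to the paper's. For part~(i), the paper uses exactly the Erd\H{o}s--Tur\'an + Koksma framework you describe (Niederreiter's version for the Sato--Tate measure, \cf{} \cite{Niederreiter_Klo}), driven by a bound $|\mom_n(a)|\ll_q(n+1)\,a\,q^{-a/2}$ on the $n$-th Weyl moment $\mom_n(a)=\int\Trace\Lambda_n\dd\nu_a$. The paper obtains this bound by citing a result of Fu and Liu rather than reproving a Deligne + Euler-characteristic estimate on $\symm^n\shG_\gamma$ from scratch; either route is viable, and you correctly flag geometric irreducibility of $\symm^n\shG_\gamma$ and the linear-in-$n$ Euler characteristic as the genuine work. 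There is, however, one step you elide that deserves attention: the Grothendieck--Lefschetz trace formula applied to $\symm^n\shG_\gamma$ controls the sum over \emph{rational points} $\beta\in\G_m(\F_{q^a})$, i.e.\ the measure the paper calls $\xi_a$, whereas your $W_n(a)=\sum_{v\in P_q(a)}U_n(\cos\theta_v)$ is a sum over \emph{closed points}. The two sums are not equal: a closed point $v$ of degree $d_v\mid a$ contributes $d_v$ rational points whose associated angle is $\tfrac{a}{d_v}\theta_v\bmod\pi$, not $\theta_v$. The paper devotes a separate argument (Proposition~\ref{prop.ST2}, rewriting $W_a:=|\G_m(\F_{q^a})|\xi_a-a|P_q(a)|\nu_a$ and bounding it crudely) to pass from one to the other, and this conversion is precisely where the additional factor of $a$ in the final estimate comes from. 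Your quoted bound $|W_n(a)|\ll(n+1)q^{a/2}$ does turn out to be numerically consistent with what survives after that conversion, but as written your proposal attributes it directly to the trace formula, which is not what the trace formula computes; you would need to add the rational-point-to-closed-point step explicitly.

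For part~(ii), your argument is correct and in fact a streamlined variant of the paper's, which invokes Liouville's inequality in the form of \cite{MiWa}. Two remarks. First, a small imprecision: when $d_v$ is odd, $2q^{d_v/2}$ need not lie in $\Q(\zeta_p)$, so $\alpha_v^\pm$ is generally \emph{not} an element of $\Z[\zeta_p]$; it is an algebraic integer in $\Q\bigl(\sqrt{q},\,\zeta_p+\zeta_p^{-1}\bigr)$, a field of degree at most $p-1$ over $\Q$. This is still all you need: the norm to $\Q$ is a nonzero rational integer, all conjugates have modulus $\leq 4q^{d_v/2}$ (by the Weil bound applied to each conjugate Kloosterman sum), and the product-of-conjugates argument gives $|\alpha_v^\pm|\geq(4q^{d_v/2})^{-(p-2)}$, hence $\min\{\theta_v,\pi-\theta_v\}\gg q^{-d_v(p-1)/4}\geq(q^a)^{-c}$. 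Second, it is worth noticing that your choice of working with $\alpha_v^\pm$, which lives in a \emph{totally real} field of degree $\leq p-1$, rather than with $z=\e^{i\theta_v}\in\Q(\sqrt{q},\zeta_p)$ of degree $\leq 2(p-1)$ as the paper does, actually yields a slightly sharper exponent than the paper's $c_p=2(p-1)$. Both arguments are the same norm/Liouville principle in slightly different clothing, but yours is a modest quantitative improvement; it would be worth stating the constant you obtain.
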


By the work of Katz, it is known that the angles of Kloosterman sums become equidistributed  in $[0, \pi]$ with respect to the Sato--Tate measure (see \cite{Katz_GKM}). 
It turns out that the same statement holds for the angles $\{\theta_v\}_{v\in P_q(a)}$ (see Theorem \ref{theo.ST}). 
The proof relies on an adaptation of Katz's method in \cite[Chap. 3]{Katz_GKM} and results of Fu and Liu in \cite{FuLiu}.
This equidistribution result, however, is not sufficient for our purpose: we need a more effective version such as Theorem \ref{itheo.angles}\eqref{itheo.ST.eff}. 
The effective version (Theorem \ref{theo.ST.eff}) will follow from Theorem \ref{theo.ST} after a more detailed analysis using tools from equidistribution theory (see \cite{Niederreiter_Klo}).

The main goal of section \ref{sec.small.angles} is to prove Theorem \ref{itheo.angles}\eqref{itheo.minangle}; 
we actually prove a more general result there  
(see Theorem \ref{theo.minangle}). 
The proof has a diophantine approximation flavour and the main tool 
is a version of Liouville's inequality (as in \cite{MiWa}). 

\paragraph{}
Combining the two results in Theorem \ref{itheo.angles} and approximating $t\mapsto\log\sin^2 t$ by sufficiently regular functions,
we prove (Theorem \ref{theo.ST.ext}) that 
\begin{equation}\label{ieq.ST.ext}
\frac{1}{|P_q(a)|} \sum_{v\in P_q(a)} \log\sin^2\theta_v \xrightarrow[a\to\infty]{}
\frac{2}{\pi}\int_0^\pi \log\sin^2 t \cdot \sin^2 t\dd t  = \log(\e/4).
\end{equation}

Theorem \ref{itheo.main} then follows rather easily (see Theorem \ref{theo.bnd.spval}), 
since \eqref{ieq.ST.ext} implies that 
the second term in \eqref{ieq.est.spval} is indeed $o(1)$ as $a\to\infty$.  
Finally, section \ref{sec.BS} is devoted to the proof of Theorem \ref{itheo.BS} (see Theorem \ref{theo.BS}).

\numberwithin{equation}{section}    
\section[The Artin-Schreier family of elliptic curves under consideration]{The Artin-Schreier family of elliptic curves $E_{a, \gamma}$} 
\label{sec.family}

\begin{center}
\emph{Throughout this article, we fix a finite field $\F_q$ of characteristic $p\geq 3$, \\
and we denote by $K=\F_q(t)$ the function field of the projective line $\P^1/\F_q$.}
\end{center}

In this section, we explain in some detail how the curves $E_{a, \gamma}$ are constructed and we collect elementary facts about them. 
We also setup some notations and conventions that will be in force for the rest of the paper. 
For a nice account of the theory of elliptic curves over $K$, the reader can confer \cite{UlmerParkCity}. 
 
For all integers $a\geq 1$, we let $\wp_a(t) = t^{q^a}-t\in\F_q[t]$. 
For any $\gamma\in\F_q^\times$ and any $a\geq 1$, we consider the elliptic curve $E_{a, \gamma}$ defined over $K$ by the affine Weierstrass model:
\begin{equation}\label{eq.Wmodel}
E_{a,\gamma}:\quad y^2=x\left(x+16\gamma \right)\left(x+\wp_a(t)^2\right). \end{equation}

The sequence $\{E_{a, \gamma}\}_{a\geq 1}$ is called an \emph{Artin-Schreier family} of elliptic curves over $K$. 
This terminology comes from the following observations. 
Let $E_\gamma/K$ be the elliptic curve given by $y^2=x(x+16\gamma)(x+t^2)$. Then, for all $a\geq 1$, the curve $E_{a, \gamma}$ is the pullback of $E_\gamma$ under the Artin-Schreier map $t\mapsto \wp_a(t)$ of $\P^1_{/\F_q}$. 
Hence, studying $E_{a, \gamma}$ over $K=\F_q(t)$ is equivalent to studying $E_\gamma$ over the Artin-Schreier extension $\F_q(u_a)$ of $\F_q(t)$, where $\wp_a(u_a)=t$.

We remark that $E_{a, \gamma}$ is ``almost'' a Legendre curve. 
More precisely, in the setting of \cite{BaigHall}, the curve~$E_{a, \gamma}$ can be obtained as follows. 
Starting from the Legendre elliptic curve $E_{0, \gamma}/K$ given by
\[E_{0, \gamma} : \quad y^2 = x(x-1)\left(1-(16\gamma)^{-1}\cdot t^2\right),\]
one takes a quadratic twist by $-(16\gamma)^{-1}$, obtaining $E'_{0, \gamma}$ defined by $ y^2 = x(x+1)\left(1+(16\gamma)^{-1}\cdot t^2\right)$. 
Pulling back $E'_{0, \gamma} $ along $\wp_a:\P^1\to\P^1$, one recovers the curve $E_{a, \gamma}$ defined by \eqref{eq.Wmodel}.

\paragraph{}
These elliptic curves $E_{a, \gamma}$ were studied in \cite[\S6.4, \S7.3]{UlmerPries} where, among others, it was shown that they satisfy the BSD conjecture (see \S\ref{subsec.BSD}). 
In their paper, Pries and Ulmer describe an alternative construction of the curves $E_{a, \gamma}$, which we now recall for later use. 
For any $\gamma\in\F_q^\times$, we let $f_\gamma:\P^1_{/K}\to\P^1_{/K}$ be the map $[x_0:x_1]\mapsto x_0/x_1+\gamma x_1/x_0$. 
Consider the curve $Z_{a, \gamma}\subset\P^1_{/K}\times\P^1_{/K}$ defined over $K$ by 
\[ f_\gamma([x_0:x_1])-f_\gamma([y_0:y_1]) =\wp_a(t),\]
in the  $([x_0: x_1], [y_0:y_1])$-coordinates  on $\P^1_{/K}\times\P^1_{/K}$. 
This curve is smooth of  genus $1$ and admits one $K$-rational point $([0:1], [0:1])$. 
The curve $Z_{a, \gamma}$ is given in the affine $(x,y)$-coordinates  on $\mathbb{A}^2_{/K}$ by 
 \[ (x-y)(xy-\gamma)=\wp_a(t)\cdot xy.\]
 The change of coordinates
 $\displaystyle (x, y)\mapsto (u,v)=\left( -\gamma\cdot\frac{\wp_a(t) -x+y}{x-y}, \gamma\wp_a(t)\cdot\frac{y(\wp_a(t)+y-x) +2\gamma}{x^2-y^2} \right)$
 then brings $Z_{a, \gamma}$ into the affine Weierstrass form
\begin{equation}\label{eq.Wmod.X}
E^\circ_{a, \gamma}:\qquad v^2 -\wp_a(t)\cdot uv = u^3-2\gamma\cdot u^2 +\gamma^2\cdot u. 
\end{equation}
One finally passes from $E^\circ_{a, \gamma}$ to $E_{a, \gamma}$ by means of the $2$-isogeny  $\phi:E^\circ_{a, \gamma}\to E_{a, \gamma}$ given by
\[ (u, v)\mapsto (x, y)=\left( \frac{4v(v-\wp_a(t)u)}{u^2}, \frac{4(2v-\wp_a(t)u)(u^2-\gamma^2)}{u^2}\right).\]
  
From the model \eqref{eq.Wmodel}, it is straightforward to compute the $j$-invariant $j(E_{a, \gamma})$ of $E_{a, \gamma}$ and obtain that
\[j(E_{a, \gamma}) = \frac{\left(\wp_a(t)^4 - 16\gamma\cdot \wp_a(t)^2 + 2^8 \gamma^2\right)^3}{\gamma^2 \cdot \wp_a(t)^4\cdot\left(\wp_a(t)^2 - 16\gamma\right)^2}\in K.\]
As a rational function of $t$, the $j$-invariant $j(E_{a, \gamma})$ is visibly nonconstant and separable. 
In particular, the curve $E_{a, \gamma}$ is not isotrivial.

\subsection{Bad reduction and invariants}
\label{subsec.invariants}

For any place $v$ of $K$, we denote by $d_v$ or $\deg v$ the degree of $v$ and $\F_{v}$ the residue field at $v$ (an extension of $\F_q$ of degree $d_v$). 
We identify finite places of $K$ with monic irreducible polynomials in $\F_q[t]$;
we also identify the residue field at $v\neq\infty$ with $\F_q[t]/(B_v)$ if $B_v\in\F_q[t]$ is the monic irreducible polynomial corresponding to $v$. 

Let us describe the reductions of $E_{a, \gamma}$ at places of $K$ and compute its relevant invariants. 
A straightforward computation of the discriminant of the model \eqref{eq.Wmodel} of $E_{a, \gamma}$ gives that
\begin{equation}\label{eq.disc}
\Delta = 2^{12} \gamma^2 \cdot \wp_a(t)^4\cdot\left(\wp_a(t)^2 - 16\gamma\right)^2.
\end{equation}
The finite places of bad reduction of $E_{a, \gamma}$ are then the monic irreducible divisors of $\Delta$ in $\F_q[t]$. 
By a routine application of Tate's algorithm (see \cite[Chap. IV, \S9]{ATAEC} for instance), one can give a more precise description:

\begin{prop}\label{prop.badred}
Let $Z_{a, \gamma}$ be the set of places of $K$ that divide $\wp_a(t)\cdot(\wp_a(t)^2-16\gamma)$. 
Then $E_{a, \gamma}$ has good reduction outside $S= Z_{a, \gamma}\cup\{\infty\}$.
The reduction of $E_{a, \gamma}$ at places $v\in S$ is as follows:
\begin{center}\renewcommand{\arraystretch}{2.0}
\begin{tabular}{| c | c  | c | c |} 
\hline
Place $v$ of $K$&   Fiber of $E_{a, \gamma}$ at $v$  & $\ord_{v}\Delta_{\min}(E_{a, \gamma})$ & $\ord_{v}\cond(E_{a, \gamma})$  \\
\hline \hline
$v \mid \wp_a(t)$  & $\type{I}_{4}$   & $4$ & $1$  \\ \hline
$v \mid \wp_a(t)^2-16\gamma$ &  $\type{I}_2$      & $2$ &$1$ \\  \hline
$\infty$    & $\type{I}_{4q^a}$   &    $4q^a$ & $1$  \\ \hline
\end{tabular}
\end{center}
\renewcommand{\arraystretch}{1.0}  
\end{prop}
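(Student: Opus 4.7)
The statement is essentially a direct application of Tate's algorithm to the model \eqref{eq.Wmodel}, carried out at each of the finitely many candidate places. The starting point is the explicit discriminant \eqref{eq.disc}. Since $p\geq 3$ and $\gamma\in\F_q^\times$, the factor $2^{12}\gamma^2$ is a unit at every finite place and at $\infty$; hence the finite places of bad reduction of the model can only divide ${\wp_a(t)\cdot(\wp_a(t)^2-16\gamma)}$. This together with the possibility of bad reduction at $\infty$ will give the set $S$.

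\textbf{Preliminary separability check.} First I would show that both $\wp_a(t)$ and $\wp_a(t)^2-16\gamma$ are separable and coprime in $\F_q[t]$. For $\wp_a$, one has $\wp_a'(t)=q^a t^{q^a-1}-1=-1$, a unit, so $\wp_a$ has simple roots; for $\wp_a(t)^2-16\gamma$, its derivative $-2\wp_a(t)$ has no common factor with $\wp_a(t)^2-16\gamma$ because a common zero would force $-16\gamma=0$, contradicting $\gamma\neq 0$. The same observation gives coprimality of the two factors. Consequently, for every finite $v\in Z_{a,\gamma}$ one has $\ord_v\wp_a(t)\in\{0,1\}$ and $\ord_v(\wp_a(t)^2-16\gamma)\in\{0,1\}$, never both simultaneously nonzero.

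\textbf{Finite places of bad reduction.} At $v\mid\wp_a(t)$ (and $v\nmid\wp_a(t)^2-16\gamma$), reducing \eqref{eq.Wmodel} modulo the uniformizer $B_v$ gives $y^2=x^2(x+16\gamma)$, a cubic with a node at the origin whose tangent cone $y^2=16\gamma x^2$ is nonsingular and splits over $\bar\F_v$ since $16\gamma\in\F_v^\times$. Hence the reduction is multiplicative. The valuation $\ord_v\Delta = 4\,\ord_v\wp_a(t) = 4$ is strictly less than $12$, so \eqref{eq.Wmodel} is already minimal at $v$; by the standard dictionary for multiplicative reduction, this yields Kodaira type $\type{I}_4$, with $\ord_v\Delta_{\min}=4$ and conductor exponent $1$ (the latter because any multiplicative fibre is semistable). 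The case $v\mid\wp_a(t)^2-16\gamma$ is analogous: the reduction becomes $y^2=x(x+16\gamma)^2$, again nodal, and $\ord_v\Delta=2<12$ gives type $\type{I}_2$.

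\textbf{The place at infinity.} This is where I expect the only real work. One sets $s=1/t$ (a uniformizer at $\infty$) and performs the change of variables $x=X/s^{2q^a}$, $y=Y/s^{3q^a}$ designed to absorb the pole of $\wp_a(t)^2=s^{-2q^a}(1-s^{q^a-1})^2$. A direct calculation transforms \eqref{eq.Wmodel} into the integral Weierstrass model
\[
Y^2 \;=\; X\bigl(X+16\gamma\, s^{2q^a}\bigr)\bigl(X+(1-s^{q^a-1})^2\bigr)
\]
over $\F_q[\![s]\!]$. Reducing modulo $s$ gives $Y^2=X^2(X+1)$, again a nodal cubic (split multiplicative). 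Computing the discriminant of this model yields $\ord_s\Delta=4q^a$. Minimality at $s=0$ follows from the observation that in the coefficient $a_2=16\gamma s^{2q^a}+(1-s^{q^a-1})^2$ the constant term is $1$, so $\ord_s(a_2)=0$, and no further scaling $X\mapsto u^2 X$, $Y\mapsto u^3 Y$ with $u$ a positive power of $s$ can preserve integrality. Hence the model is minimal at $\infty$, the Kodaira type is $\type{I}_{4q^a}$, and once more the conductor exponent is $1$ by semistability. Assembling these three cases with the statement on good reduction outside $S$ completes the proof. The main obstacle is carrying out the change of variables at $\infty$ cleanly; the finite places are straightforward because the relevant factors of $\Delta$ are manifestly squarefree and the tangent cones at the singular points are visibly split.
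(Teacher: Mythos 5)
Your plan is correct and takes essentially the same route as the paper, which simply invokes a routine application of Tate's algorithm and omits the computational details you supply. One small caveat about minimality at $\infty$: ruling out rescalings $X\mapsto u^2X,\ Y\mapsto u^3Y$ by positive powers of $s$ via $\ord_s(a_2)=0$ does not by itself give minimality, since an admissible change of variables also allows translations $X\mapsto X+r$ (which can change $\ord_s(a_2)$ when $p\neq 3$); the clean criterion is translation-invariant, namely $c_4 = 16\bigl(a^2-ab+b^2\bigr)$ with $a=16\gamma s^{2q^a}$, $b=(1-s^{q^a-1})^2$, so $\ord_s(c_4)=0<4$, which in one stroke certifies multiplicative reduction, minimality, and the type $\type{I}_{4q^a}$.
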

In this table, for all places  $v$   of bad reduction for $E_{a, \gamma}$, we have denoted by $\ord_{v}\Delta_{\min}(E_{a, \gamma})$ (resp. $\ord_v\cond(E_{a, \gamma})$) the valuation at $v$ of the minimal discriminant of $E_{a, \gamma}$ (resp. of the conductor of $E_{a, \gamma}$). 
See \cite[Chap. IV, \S9]{ATAEC}, \cite[Lect.\,1  \S8]{UlmerParkCity} for the definitions of these local invariants.

\paragraph{}
From this local information, one deduces the values of the following global invariants (we refer to \cite[Lect.\,1]{UlmerParkCity} for their definition). 
The minimal discriminant divisor $\Delta_{\min}(E_{a, \gamma})$ 
has degree ${\deg \Delta_{\min}(E_{a, \gamma}) = 12 q^{a}}$, 
and the conductor $\cond(E_{a, \gamma})\in\mathrm{Div}(\P^1)$ has degree $\deg \cond(E_{a, \gamma}) = 3q^a+1$.
Indeed, since both $\wp_a(t)$ and $\wp_a(t)^2-16\gamma$ are squarefree in $\F_q[t]$, one has 
\[ \sum_{v\mid \wp_a(t)} \deg v = \deg \wp_a(t) = q^a \quad\text{ and }\quad \sum_{v\mid \wp_a(t)^2-16\gamma} \deg v = \deg (\wp_a(t)^2-16\gamma) = 2q^a.\]
Hence the exponential differential height $H(E_{a, \gamma})$  is $q^{q^a}$ since, by definition (see \S2 in \cite[Lect.\,3]{UlmerParkCity}), it is given by $H(E_{a, \gamma}) := q^{(\deg \Delta_{\min}(E_{a, \gamma}))/12}$.
Summarising these calculations, we have 
\begin{equation}\label{eq.invariants}
 \deg \Delta_{\min}(E_{a, \gamma}) = 12 q^a, \quad
\deg \cond(E_{a, \gamma}) = 3q^a+1, \quad \text{ and } \quad 
H(E_{a, \gamma}) = q^{q^a}.
\end{equation}

\begin{rema}\label{rem.minimod.loc}
As is clear from \eqref{eq.disc} and Proposition \ref{prop.badred}, the discriminant $\Delta$ 
of the Weierstrass model \eqref{eq.Wmodel} 
has the same valuation as~${\Delta_{\min}(E_{a, \gamma})}$ at all finite places of $K$. 
Therefore, the model \eqref{eq.Wmodel} is a minimal integral model of $E_{a, \gamma}$ at all places $v\neq\infty$ of $K$. 
\end{rema}

\subsection[L-function, analytic rank and special value]{Definitions of $L$-function, analytic rank and special value}
\label{subsec.Lfunc.defi}

For any place $v$ of $K$, with degree $d_v$ and residue field $\F_v$, we denote by $(\widetilde{E_{a, \gamma}})_v$ the reduction modulo $v$ of a minimal integral model of $E_{a, \gamma}$ at $v$: $(\widetilde{E_{a, \gamma}})_v$ is thus a plane cubic curve over $\F_{v}$.
By definition, the $L$-function of $E_{a, \gamma}$ is the power series in $T$ given by
\begin{equation}\label{eq.def.Lfunc}
 L(E_{a, \gamma}, T) = \prod_{v\text{ good }} \left(1 - a_v \cdot T^{d_v} + q^{d_v} \cdot  T^{2d_v}\right)^{-1} \cdot
\prod_{v\text{ bad }} \left(1 - a_v \cdot  T^{d_v}  \right)^{-1} \in\Z[[T]],
\end{equation}
where the products are over places of $K$ where $E_{a, \gamma}$ has good (resp. bad) reduction, and where 
\begin{equation*}
a_v := \begin{cases}
q^{d_v} +1 - |(\widetilde{E_{a, \gamma}})_v(\F_{v})|
& \text{ if $E_{a, \gamma}$ has good reduction at }v, \\
0 & \text{ if $E_{a, \gamma}$ has additive reduction at }v, \\ 
+1& \text{ if $E_{a, \gamma}$ has split multiplicative reduction at }v, \\ 
-1& \text{ if $E_{a, \gamma}$ has nonsplit multiplicative reduction at }v. 
 \end{cases}
\end{equation*}
We refer to \cite[\S2.2]{BaigHall} and \cite[Lect.1  \S9, Lect.\,3  \S6]{UlmerParkCity} for more details. 

Since $E_{a, \gamma}$ is not isotrivial, a deep theorem of Grothendieck shows that $L(E_{a, \gamma}, T)$ is actually a polynomial in $T$ with integral coefficients whose degree is denoted by $b(E_{a, \gamma})$. 
Further, by the Grothendieck-Ogg-Shafarevich formula and our computation of the degree of $\cond(E_{a, \gamma})$ (see \eqref{eq.invariants}),    
we know that
\begin{equation}\label{eq.degL}
b(E_{a, \gamma}) =  \deg L(E_{a, \gamma}, T) = \deg \cond(E_{a, \gamma}) -4 = 3(q^a-1).
\end{equation}
In section \ref{sec.Lfunc}, we will compute the polynomial $L(E_{a, \gamma}, T)$ explicitly. For now, we only note that it makes sense to define the following two quantities: 

\begin{defi}\label{defi.spval}
Let $\rho(E_{a, \gamma})$ be the \emph{analytic rank of $E_{a, \gamma}$} \ie{}, the multiplicity of $T=q^{-1}$ as a root of~$L(E_{a, \gamma}, T)$. 
Further, define the \emph{special value of $L(E_{a, \gamma}, T)$ at $T=q^{-1}$} to be
\begin{equation}\label{eq.spval.def}
L^\ast(E_{a, \gamma},1) := \left.\frac{L(E_{a, \gamma}, T)}{(1-qT)^\rho}\right|_{T=q^{-1}}\in\Z[q^{-1}]\smallsetminus\{0\}, \quad \text{ where }\rho = \rho(E_{a, \gamma}).
\end{equation}
\end{defi}

\begin{rema}
The special value $L^\ast(E_{a, \gamma}, 1)$ is ``usually'' defined as the first nonzero coefficient in the Taylor expansion around $s=1$ of the function $s\mapsto L(E_{a, \gamma}, q^{-s})$. 
Our definition \eqref{eq.spval.def} differs from that more ``usual'' one by a factor $(\log q)^\rho$. 
We prefer to use the normalisation \eqref{eq.spval.def} because it ensures that $L^\ast(E_{a,\gamma}, 1)\in\Q^\ast$. 
This choice is consistent with our normalisation of $\Reg(E_{a, \gamma})$, see \S\ref{subsec.BSD} below.
\end{rema}

\subsection{The BSD conjecture}
\label{subsec.BSD}

The Mordell--Weil theorem implies that $E_{a, \gamma}(K)$ is a finitely generated abelian group (\cf{} \cite[Lect.\,1, Thm. 5.1]{UlmerParkCity}). 
Since the canonical N\'{e}ron--Tate height $\hhat{h}:E_{a, \gamma}(K) \to \Q$ is quadratic, it induces a $\Z$-bilinear pairing ${\langle \cdot,\cdot \rangle: E_{a, \gamma}(K)\times E_{a, \gamma}(K)\to\Q}$, which is nondegenerate modulo $E_{a, \gamma}(K)\tors$ (\cf{} \cite[Chap. III, Thm. 4.3]{ATAEC}).
We can then define the \emph{N\'{e}ron-Tate regulator} of $E_{a, \gamma}$ by 
\[\Reg(E_{a, \gamma}) := \left|\det\left( \langle P_i, P_j \rangle\right)_{1\leq i,j \leq r}\right| \in\Q^\ast,\]
for any choice of a $\Z$-basis $P_1, \dots, P_r \in E_{a, \gamma}(K)$ of $E_{a, \gamma}(K)/E_{a, \gamma}(K)\tors$. 
Note that we normalise $\langle \cdot,\cdot\rangle$ to have values in $\Q$: we may do so since, in our context, this height pairing has an interpretation as an intersection pairing on the minimal regular model of $E_{a, \gamma}$ (see \cite[Chap. III, \S9]{ATAEC}).    

Let us also recall that the \emph{Tate--Shafarevich group} of $E_{a, \gamma}/K$ is defined by
\[\sha(E_{a, \gamma}) :=
\ker\left( \H^1(K, E_{a, \gamma}) \longrightarrow \prod_{v} \H^1(K_v, (E_{a, \gamma})_v)\right),\]
see \cite[Lect.\,1 \S11]{UlmerParkCity} for more details. 
In Theorem \ref{theo.BSD} right below, we will see that $\sha(E_{a, \gamma})$ is finite. 
 
It has been conjectured by Birch, Swinnerton-Dyer and Tate that the ``analytic'' quantities $\rho(E_{a, \gamma})$ and $L^\ast(E_{a, \gamma}, 1)$ have an arithmetic interpretation (see \cite[Conj. B]{Tate_BSD}). 
Even though this conjecture is still open in general, it has been proved by Pries and Ulmer for $E_{a, \gamma}$ in \cite{UlmerPries}. 
Let us state their result as follows:

\begin{theo}[Pries - Ulmer]\label{theo.BSD}
For all $\gamma\in\F_q^\times$ and all integers $a\geq 1$, the elliptic curve $E_{a, \gamma}/K$ satisfies the full Birch and Swinnerton-Dyer conjecture. That is to say,
\begin{enumerate}[$\bullet$]
\item The Tate--Shafarevich group $\sha(E_{a, \gamma})$ is finite.
\item The rank of $E_{a, \gamma}(K)$ is equal to $\rho(E_{a, \gamma})=\ord_{T=q^{-1}}L(E_{a, \gamma}, T)$.
\item Moreover, one has
\begin{equation}\label{eq.BSD}
L^\ast(E_{a, \gamma}, 1) 
= \frac{|\sha(E_{a, \gamma})| \cdot \Reg(E_{a, \gamma})}{H(E_{a, \gamma})}\cdot \frac{\tam(E_{a, \gamma})\cdot q}{|E_{a, \gamma}(K)\tors|^2},
\end{equation}
where $\tam(E_{a, \gamma})$ denotes the Tamagawa number of $E_{a, \gamma}$.
\end{enumerate}
\end{theo}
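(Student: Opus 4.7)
The plan is to reduce all three assertions to the single statement that the Tate--Shafarevich group $\sha(E_{a,\gamma})$ is finite, and then to derive that finiteness from the explicit Artin--Schreier geometric structure of $E_{a,\gamma}$. Indeed, by the fundamental theorem of Artin and Tate (see \eg{} \cite[Lect.\,3, \S6]{UlmerParkCity}), for any nonisotrivial elliptic curve $\Ecal/K$, the three statements ``$\sha(\Ecal)$ is finite'', ``$\rk \Ecal(K) = \rho(\Ecal)$'', and ``the refined BSD formula \eqref{eq.BSD} holds'' are equivalent, each implying the other two. Hence the whole theorem reduces to showing that $\sha(E_{a,\gamma})$ is finite.

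To prove that finiteness, I would pass to the minimal regular proper model $\Ecal_{a,\gamma} \to \P^1_{/\F_q}$ of $E_{a,\gamma}$. A second theorem of Artin--Tate identifies the finiteness of $\sha(E_{a,\gamma})$ with the Tate conjecture for divisors on this surface, \ie{} with the finiteness of the $\ell$-primary part of the Brauer group $\mathrm{Br}(\Ecal_{a,\gamma})$ for some (equivalently, any) prime $\ell\neq p$. The key geometric input is that $E_{a,\gamma}$ is the pullback of $E_\gamma$ under the Artin--Schreier map $\wp_a:\P^1\to\P^1$, so that $\Ecal_{a,\gamma}$ is birational to a desingularisation of the fibered product $\Ecal_\gamma \times_{\P^1,\wp_a} \P^1$. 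The alternative Pries--Ulmer model \eqref{eq.Wmod.X} and the curve $Z_{a,\gamma}$ defined by $f_\gamma([x_0:x_1]) - f_\gamma([y_0:y_1]) = \wp_a(t)$ render this structure especially transparent.

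The main obstacle---and the heart of the proof---consists in exhibiting a dominant rational map from a product of smooth projective curves (over $\bar{\F}_q$) onto $\Ecal_{a,\gamma}$. Once such a domination is in hand, a descent argument ultimately due to Tate for abelian varieties, applied to the product of the Jacobians of the two curves in question, implies the Tate conjecture for $\Ecal_{a,\gamma}$, and hence the sought finiteness of $\sha(E_{a,\gamma})$. To construct the dominant map, I would exploit the ``separation of variables'' in the equation $f_\gamma(x) - f_\gamma(y) = \wp_a(t)$, combined with the $\F_q$-linearity of the Artin--Schreier polynomial $\wp_a$: introducing a new variable $u$ with $t = \wp_a(u)$ and splitting $u = u_1 + u_2$, the additivity $\wp_a(u) = \wp_a(u_1) + \wp_a(u_2)$ allows one to factor $Z_{a,\gamma}$ through two Artin--Schreier covers $C_{a,1}, C_{a,2}$ of $\P^1$, thus realising $Z_{a,\gamma}$ (and therefore $\Ecal_{a,\gamma}$, up to birational modification) as a quotient of $C_{a,1} \times C_{a,2}$ by the action of a finite group. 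This is precisely the strategy carried out in \cite[\S\S6.4, 7.3]{UlmerPries}.

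Once $\sha(E_{a,\gamma})$ is known to be finite, the refined BSD formula \eqref{eq.BSD} follows from the cohomological form of the Artin--Tate formula. Its various factors (Tamagawa number $\tam(E_{a,\gamma})$, exponential differential height $H(E_{a,\gamma})$, torsion subgroup) are then read off from the local data already tabulated in Proposition \ref{prop.badred} and from the global computations summarised in \eqref{eq.invariants}; this last step is routine bookkeeping and should require no additional geometric input.
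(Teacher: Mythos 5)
Your proposal follows essentially the same strategy as the paper (and as Pries--Ulmer): reduce BSD to the Tate conjecture for the Kodaira--N\'eron model, then establish the Tate conjecture by exhibiting a domination by a product of curves and invoking Tate's theorem for abelian varieties over finite fields. Two details, however, are handled more loosely than they need to be and would need repair before the argument closes up.

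First, the ``separation of variables'' you describe is not quite the right substitution. Setting $t=\wp_a(u)$ and then splitting $u=u_1+u_2$ would replace the right-hand side of $f_\gamma(x)-f_\gamma(y)=\wp_a(t)$ by $\wp_a(\wp_a(u))$, which is a different curve. The correct construction introduces \emph{two} auxiliary variables $u_1,u_2$ subject to $\wp_a(u_1)=f_\gamma(x)$ and $\wp_a(u_2)=f_\gamma(y)$ (each defining a copy of the curve $\Ccal_{a,\gamma}$ from Remark~\ref{rem.kloos.norm}) and recovers $t$ by $t=u_1-u_2$, so that $\wp_a(t)=\wp_a(u_1)-\wp_a(u_2)=f_\gamma(x)-f_\gamma(y)$. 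This is what produces the dominant rational map $\Ccal_{a,\gamma}\times\Ccal_{a,\gamma}\dashrightarrow\Ecal^\circ_{a,\gamma}$.

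Second, and more structurally, the DPC picture is transparent for $\Ecal^\circ_{a,\gamma}$ (the minimal regular model of $E^\circ_{a,\gamma}$, which is birational to $Z_{a,\gamma}$), not directly for $\Ecal_{a,\gamma}$. You present the argument as if it applied immediately to $\Ecal_{a,\gamma}$, but $E_{a,\gamma}$ is only $2$-isogenous to $E^\circ_{a,\gamma}$. The paper bridges this gap by invoking Milne's theorem that BSD is invariant under isogenies of degree prime to $p$ (which applies since $p\geq 3$), proving BSD for $E^\circ_{a,\gamma}$ and transferring it to $E_{a,\gamma}$. Alternatively, one could note that the $2$-isogeny extends to a dominant rational map $\Ecal^\circ_{a,\gamma}\dashrightarrow\Ecal_{a,\gamma}$, so that $\Ecal_{a,\gamma}$ is itself DPC; either way, this passage has to be made explicit, and your write-up leaves it out.
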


    \begin{proof} 
    We only sketch a proof and refer the interested reader to \cite[\S3]{UlmerPries} for more details.
    As we have seen at the beginning of this section, $E_{a, \gamma}$ is $2$-isogenous to $E^\circ_{a, \gamma}$. 
    Since $E_{a, \gamma}$ and $E^\circ_{a, \gamma}$ are linked by an isogeny of degree prime to the characteristic of $K$, Theorem 7.3 in \cite[Chap. I]{Milne_ArithDual} implies that the BSD conjecture holds for $E_{a, \gamma}$ if and only if it does for $E^\circ_{a, \gamma}$. 
    Hence Theorem \ref{theo.BSD} will follow 
    if we prove 
    that $E^\circ_{a, \gamma}$ satisfies the BSD conjecture.
    
    We have also shown that $E^\circ_{a, \gamma}$ is birational to the curve $Z_{a, \gamma}\subset\P^1\times\P^1$ which,
    by construction,  is given in affine coordinates  by an equation of the form
    $f_\gamma(x)-f_\gamma(y)=\wp_a(t)$
    where $f_\gamma$ is a certain rational function on $\P^1$ over $K$ and $\wp_a(t)\in\F_q[t]$ is a separable additive polynomial. 
    Under these conditions, Corollary 3.1.4 of \cite{UlmerPries} proves that $E^\circ_{a, \gamma}$ satisfies the BSD conjecture. 
    
    The crucial point of the proof is the following:  given the specific shape of the equation of $Z_{a, \gamma}$ to which~$E^\circ_{a, \gamma}$ is birational, the minimal regular model $\Ecal^\circ_{a, \gamma}\to\P^1$ over $\F_q$ of the curve $E^\circ_{a, \gamma}/K$ is dominated by a product of curves $\Ccal_{a, \gamma}\times\Ccal_{a, \gamma}\dashrightarrow \Ecal^\circ_{a, \gamma}$ over $\F_q$ (where $\Ccal_{a, \gamma}$ is actually the curve defined in Remark \ref{rem.kloos.norm} below). 
    The Tate conjecture (T) asserts that the order of the pole of the zeta function of a surface $S/\F_q$ equals the rank of the N\'{e}ron--Severi group of $S$ (see \cite[Conj. C]{Tate_BSD}, or  \S10--\S13 in \cite[Lect.\,2]{UlmerParkCity}).
    Conjecture (T) is proved for surfaces that are dominated  by products of curves. 
    In particular, (T) holds for the surface $\Ecal^\circ_{a, \gamma}/\F_q$. 
    On the other hand, it is known that  conjecture (T) for  $\Ecal^\circ_{a, \gamma}/\F_q$ is equivalent to the BSD conjecture for the generic fiber of $\Ecal^\circ_{a, \gamma}\to\P^1$ \ie{}, for the elliptic  curve $E^\circ_{a, \gamma}/K$ (Theorem 8.1 in \cite[Lect.\,2]{UlmerParkCity}).
    Hence the result.
    \ProofEnd\end{proof}

In section \ref{sec.bnd.spval} we give bounds on the special value $L^\ast(E_{a, \gamma}, 1)$ on the left-hand side of \eqref{eq.BSD} and, in section~\ref{sec.BS}, 
we deduce from these an estimate on the asymptotically significant quantities on the right-hand side of \eqref{eq.BSD}.
For completeness, let us recall the following bounds (which we will need to prove Theorem \ref{theo.BS}).
\begin{prop} \label{prop.dioph.bnd}
Let $E$ be a nonisotrivial elliptic curve over $K$. Then
\begin{multicols}{2}
\begin{enumerate}[(i)]
\item $|E(K)\tors| \ll_q 1$,
\item $\log\tam(E) = o\big( \log H(E)\big)$ as $H(E)\to\infty$.
\end{enumerate}
\end{multicols}
\end{prop}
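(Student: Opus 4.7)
Both bounds are classical facts about nonisotrivial elliptic curves over $K$, and I would prove them as follows.

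For (i), my plan is to treat the $p$-primary and prime-to-$p$ parts separately. Non-isotriviality of $E$ means $j(E)$ is nonconstant, hence has a pole at some place $v$ of $K$; equivalently, $E$ has potentially multiplicative reduction at $v$. Over a finite extension $L$ of $K$, $E$ acquires split multiplicative reduction at a place $w$ above $v$ and is isomorphic to a Tate curve $L_w^\times/q_w^{\Z}$. Since $\mu_{p^\infty}$ is trivial in characteristic $p$, a direct computation shows $(L_w^\times/q_w^\Z)[p^\infty] = 0$, and hence $E(K)[p^\infty] \into E(L_w)[p^\infty] = 0$. For the prime-to-$p$ part, I would invoke the classical theorem of Levin asserting the uniform bound $|E(K)_{\mathrm{tors}}^{(p')}| \leq C(q)$ for nonisotrivial $E/K$. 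One proves Levin's result by a moduli-theoretic argument: a $K$-point on $E$ of order $n$ coprime to $p$ corresponds to a $K$-point on the modular curve $Y_1(n)/\F_q$, and once $n$ exceeds a constant depending only on $q$, the genus of $Y_1(n)$ forces it to admit no nonconstant map from $\P^1/\F_q$, forcing $E$ to be isotrivial.

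For (ii), the starting point is Kodaira's classification of special fibres. At each bad place $v$ of $E$, setting $n_v := \ord_v \Delta_{\min}(E)$, Tate's algorithm yields the clean bound $c_v \leq \max(4, n_v)$: the multiplicative types $\type{I}_{n_v}$ contribute $c_v \leq n_v$, while all additive types contribute $c_v \leq 4$. Writing $B(E)$ for the number of bad places, this gives
\[
\log \tam(E) = \sum_{v\text{ bad}} \log c_v \,\leq\, B(E)\log 4 + \sum_{v\text{ bad}} \log n_v,
\]
and by concavity of $\log$ (equivalently, by the arithmetic-geometric mean inequality), the second sum is at most $B(E)\log\bigl(\deg \Delta_{\min}(E)/B(E)\bigr)$.

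The crucial remaining input is an upper bound on $B(E)$, the number of distinct monic irreducible divisors of $\Delta_{\min}(E)$. The function-field analogue of the Hardy-Ramanujan estimate yields
\[
B(E) \,\ll_q\, \frac{\deg \Delta_{\min}(E)}{\log \deg \Delta_{\min}(E)};
\]
it is elementary, since there are at most $\asymp q^d/d$ monic irreducibles of degree $d$ in $\F_q[t]$, so accommodating $B$ distinct factors forces the divisor to have degree $\gg q^D$ with $D \asymp \log_q B$. Setting $M := \deg\Delta_{\min}(E)$, the map $B\mapsto B\log(M/B)$ is increasing on $[0,M/e]$, so substituting the above bound yields
\[
\log\tam(E) \,\ll_q\, M\cdot \frac{\log\log M}{\log M} \,=\, o(M) \,=\, o\bigl(\log H(E)\bigr),
\]
which is the claim. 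In my view, the main obstacle is proving Levin's bound in (i) with an effective constant, which requires moduli-theoretic input; the argument for (ii) is essentially bookkeeping once the Hardy-Ramanujan estimate is in place.
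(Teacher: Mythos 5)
The paper itself does not give proofs for either bound: for (i) it refers to the survey in Ulmer's Park City notes, and for (ii) it cites Theorem 1.22 of \cite{HP15} and Théorème 1.5.4 of \cite{Griffon_PHD}. You are therefore supplying actual arguments where the paper merely cites, which is welcome. Your proof of (ii) is correct and essentially the same elementary route the paper points to: bound $c_v\leq\max(4,n_v)$ via Kodaira/Tate, use concavity of $\log$ to bound $\sum\log n_v$ in terms of the number $B(E)$ of bad places, and bound $B(E)\ll_q \deg\Delta_{\min}/\log\deg\Delta_{\min}$ by a Hardy--Ramanujan count of small-degree places of $\P^1_{/\F_q}$. (One tiny imprecision: the AM--GM step gives $B\log(\sum_v n_v/B)$, and $\sum_v n_v\leq \deg\Delta_{\min}$ only because $\deg v\geq 1$; your final bound is unaffected.)

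For (i), the prime-to-$p$ argument via $Y_1(n)$ is sound, but the handling of the $p$-primary part has a genuine gap. The claim that ``$\mu_{p^\infty}$ is trivial in characteristic $p$, hence $(L_w^\times/q_w^{\Z})[p^\infty]=0$'' is false in general. Computing directly, the $p^n$-torsion of $L_w^\times/q_w^\Z$ consists of classes of the (unique) $p^n$-th roots $q_w^{k/p^n}$ that actually lie in $L_w^\times$, and one finds
\[
(L_w^\times/q_w^\Z)[p^\infty]\;\cong\;\Z/p^{r}\Z, \qquad r:=\max\{m\geq 0: q_w\in (L_w^\times)^{p^m}\},
\]
which is already visible on valuations since $p^m\mid v_w(q_w)=-v_w(j(E))$ is necessary. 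For example over $\F_p((s))$ with $q_w=s^{p}$ the point $s\bmod q_w^\Z$ has exact order $p$. Passing to a separable extension $L/K$ only multiplies $v_w(j)$ by the ramification index, so you cannot remove divisibility by $p$ that way, and $r$ is not bounded in terms of $q$ alone (take $j(E)$ with a single pole of order $p^N$). So the Tate-curve step proves $p$-torsion vanishes only when some pole of $j(E)$ has order prime to $p$, which is not guaranteed. The standard uniform bound on the $p$-primary torsion of a nonisotrivial $E/K$ — the one surveyed in the very reference the paper cites — is obtained by the analogous moduli argument with \emph{Igusa curves} $\mathrm{Ig}(p^n)$ in place of $Y_1(n)$: their genus grows with $n$, so for $n$ large there is no nonconstant map $\P^1_{/\F_q}\to\mathrm{Ig}(p^n)$ and hence no nonisotrivial $E/K$ with a $K$-rational point of order $p^n$. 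Replacing your Tate-curve step with this Igusa-curve argument would repair the proof.
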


    \begin{proof} 
    The first bound is the analogue for elliptic curves over  $K=\F_q(t)$ of Merel's uniform bound on torsion for elliptic curves over $\Q$. 
    There are several proofs of \textit{(i)} and we refer the reader to \cite[Lect.\,I  \S7]{UlmerParkCity} for a survey and a sketch of proof (by a modular method). 
    The bound \textit{(ii)} on the Tamagawa number is a consequence of 
    Theorem~{1.22} of \cite{HP15} 
    in the case when $E$ is semistable or $p>3$.
    A self-contained (and elementary) proof for all elliptic curves over $K$ can also be found in \cite[Théorème 1.5.4]{Griffon_PHD}.
    \ProofEnd\end{proof}

\section{Characters and Kloosterman sums}
\label{sec.prelim}

The goal of this section is to introduce the objects which appear in the $L$-function of $E_{a, \gamma}$.

We fix a finite field field $\F_q$ of odd characteristic and  a nontrivial additive character $\psi_q$ on $\F_q$, which we assume to take values in the cyclotomic field $\Q(\zeta_p)$.
For instance, a standard choice of $\psi_q$ is the map 
$\psi_q: x\mapsto {\zeta_p}^{\trace_{\F_q/\F_p}(x)}$ where $\zeta_p$ is  a primitive $p$-th root of unity and $\trace_{\F_q/\F_p}:\F_q\to\F_p$ is the trace map.
 
For any finite extension $\F$ of $\F_q$, we denote by $\trace_{\F/\F_q} : \F\to\F_q$ the relative trace and we ``lift'' $\psi_q$ to a nontrivial additive character $\psi_\F:\F\to\Q(\zeta_p)^\times$ on $\F$ by putting $\psi_\F := \psi_q\circ\trace_{\F/\F_q}$.

\subsection{Kloosterman sums} 
\label{subsec.kloosterman}

For a finite field $\F$ of odd characteristic $p$, a nontrivial additive character $\psi$ on $\F$ with values in the cyclotomic field $\Q(\zeta_p)$, and a parameter $\alpha \in \F^\times$, we define the \emph{Kloosterman sum $\Kl_\F(\psi;\alpha)$} by:
\begin{equation}\label{eq.defi.kloo}
\Kl_\F(\psi; \alpha) := - \sum_{x\in \F^\times} \psi\left(x+\frac{\alpha}{x}\right).
\end{equation}
As a sum of $p$-th roots of unity, $\Kl_\F(\psi;\alpha)$ is an algebraic integer in $\Q(\zeta_p)$. 
For our purpose, it is convenient to normalise the sum by a $-1$ sign.
Let us gather in one proposition several classical facts about the Kloosterman sums that will be useful in this article.

\begin{prop}\label{prop.kloo}
 Let $\F, \psi$ and $\alpha$ be as above. Then:
\begin{enumerate}[{\rm (i)}]
\itemsep0.1em 
\item\label{kloo.item1} $\Kl_\F(\psi; \alpha)$ is a totally real algebraic integer in $\Q(\zeta_p)$ \ie{}, $\Kl_\F(\psi;\alpha)\in\Z[\zeta_p+\zeta_p^{-1}]$.

\item\label{kloo.item2} $\Kl_\F(\psi;\alpha)$ satisfies ``Sali\'{e}'s formula'': 
\begin{equation}\label{eq.saliesformula}
\Kl_\F(\psi; \alpha) = - \sum_{y\in \F} \lambda(y^2-4\alpha) \cdot\psi(y),
\end{equation}
where $\lambda:\F^\times\to\{\pm1\}$ is the unique multiplicative character on $\F^\times$ of exact order $2$ (extended by $\lambda(0):=0$ to the whole of $\F$).

\item\label{kloo.item3} If $\F$ contains $\F_q$, one has $\Kl_\F(\psi_q\circ\trace_{\F/\F_q};\alpha) = \Kl_\F(\psi_q\circ\trace_{\F/\F_q}; \alpha^{q})$.

\item\label{kloo.item4} 
There exist two algebraic integers $\kkappa_{\F}(\psi;\alpha)$ and ${\kkappa}'_{\F}(\psi;\alpha)$ such that for any finite extension $\F'/\F$, 
\begin{equation}\label{eq.HDformula}
\kkappa_\F(\psi; \alpha)\cdot \kkappa'_\F(\psi; \alpha) = |\F|\quad \text{ and }\quad 
\Kl_{\F'}(\psi\circ\trace_{\F'/\F};  \alpha) = \kkappa_{\F}(\psi; \alpha)^{[\F':\F]} + {\kkappa}'_{\F}(\psi; \alpha)^{[\F':\F]}.
\end{equation}
The pair $\{\kkappa_{\F}(\psi;\alpha), {\kkappa}'_{\F}(\psi;\alpha)\}$ is uniquely determined by $\F, \psi, \alpha$.
\item\label{kloo.item5} 
 $\kkappa_{\F}(\psi;\alpha)$ and $\kkappa'_{\F}(\psi;\alpha)$ have magnitude $|\F|^{1/2}$ in any complex embedding.  

\item\label{kloo.item7} In any complex embedding of $\Q(\zeta_p)$, one has
\begin{equation}\label{eq.nonvanishing}
0 < |\Kl_\F(\psi;\alpha)| < 2{|\F|}^{1/2}.
\end{equation}
\end{enumerate}

\end{prop}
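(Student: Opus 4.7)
Items (i), (ii), (iii) are elementary manipulations of the defining sum \eqref{eq.defi.kloo}. For (i), the substitution $x \mapsto -x$ combined with the identity $\overline{\psi(z)} = \psi(-z)$ (in any embedding $\Q(\zeta_p) \hookrightarrow \C$, and more generally for the non-trivial element of $\Gal(\Q(\zeta_p)/\Q(\zeta_p + \zeta_p^{-1}))$) shows that $\Kl_\F(\psi;\alpha)$ is fixed by that Galois involution, hence lies in $\Z[\zeta_p + \zeta_p^{-1}]$. For (ii), I would regroup the sum according to the value $y = x + \alpha/x$: the quadratic formula gives that the fiber in $\F^\times$ above $y$ has cardinality $1 + \lambda(y^2 - 4\alpha)$ (including the degenerate case $y^2 = 4\alpha$, where $\lambda(0) = 0$ correctly records the unique solution $x = y/2$), and orthogonality $\sum_{y \in \F}\psi(y) = 0$ kills the constant contribution. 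For (iii), the substitution $x = u^q$ together with the characteristic-$p$ identity $(u + \beta u^{-1})^q = u^q + \beta^q u^{-q}$ applied to $\beta = \alpha^{1/q} \in \F$, plus the Frobenius-invariance $\trace_{\F/\F_q}(u^q) = \trace_{\F/\F_q}(u)$, directly yields the claim.

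Item (iv) is the Hasse--Davenport relation for Kloosterman sums. The quickest route is cohomological: the Kloosterman sheaf $\shKl_2$ of Deligne--Katz is a rank-$2$ lisse $\bar\Q_\ell$-sheaf on $\G_{m,\F}$ whose geometric Frobenius trace on the stalk at $\alpha$ equals $-\Kl_\F(\psi;\alpha)$; the Frobenius of $\F'/\F$ of degree $d$ acts as $\Frob^d$ on the same stalk and therefore has trace $\kkappa^d + {\kkappa'}^d$, where $\{\kkappa, \kkappa'\}$ are the two eigenvalues at $\alpha$. The product $\kkappa\kkappa' = |\F|$ is the determinant of Frobenius, which is a Tate twist of the trivial character since $\det \shKl_2$ is geometrically trivial. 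Uniqueness of the pair is immediate once one eigenvalue is fixed. Alternatively, I could combine the classical Hasse--Davenport identity for Gauss sums with Salié's formula from (ii) to obtain the same identity without appealing to $\ell$-adic sheaves.

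Items (v) and (vi) rest on Weil's theorem. Purity of $\shKl_2$ (from Deligne's Weil II, or classically from Weil's bound applied to the Artin--Schreier cover $y^p - y = x + \alpha/x$ of $\G_m$) gives $|\kkappa| = |\kkappa'| = |\F|^{1/2}$ in every complex embedding, proving (v). The upper bound in (vi) is then immediate from the triangle inequality: $|\Kl_\F(\psi;\alpha)| \leq |\kkappa| + |\kkappa'| = 2|\F|^{1/2}$, and strictness follows because equality would force $\kkappa = \kkappa' = \pm|\F|^{1/2}$, contradicting the fact that the Kloosterman sheaf has no unipotent Frobenius stalks at points of $\G_m$ in odd characteristic. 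The main obstacle I anticipate is the strict lower bound $\Kl_\F(\psi;\alpha) \neq 0$: this is the classical non-vanishing statement for Kloosterman sums, which I would invoke from the literature on exponential sums rather than re-derive here, since its proof lies outside the purely formal manipulations used in (i)--(v).
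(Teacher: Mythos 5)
The paper's own ``proof'' of this proposition is purely a citation: items (ii), (iv), (v) are referred to Theorems 5.47, 5.43, 5.44 of Lidl--Niederreiter, items (i) and (iii) are declared ``easily checked'', and item (vi) is referred to Corollary 3.2 of van der Geer--van der Vlugt. Your proposal goes further and actually supplies arguments, so the comparison is mostly about whether those arguments are sound and which route they take.

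Your arguments for (i), (ii), (iii) are correct and are the standard elementary manipulations. In particular your derivation of Sali\'e's formula (ii) --- regroup by $y=x+\alpha/x$, count fibres as $1+\lambda(y^2-4\alpha)$, and use $\sum_y\psi(y)=0$ --- is exactly the right short proof. For (iv) and (v) you take the cohomological route (Kloosterman sheaf, geometric Frobenius eigenvalues, Deligne purity), whereas the paper's reference proves these by classical means (Hasse--Davenport for Gauss sums plus Sali\'e, Weil's bounds for Artin--Schreier curves); both are valid, and you even note the classical alternative yourself. The cohomological route is cleaner once the machinery is set up, but for a self-contained treatment the paper's elementary references are more economical.

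There is a genuine gap in your treatment of the strict upper bound in (vi). You assert that $|\Kl_\F(\psi;\alpha)|=2|\F|^{1/2}$ would force $\kkappa=\kkappa'=\pm|\F|^{1/2}$ (true), and that this contradicts ``the fact that the Kloosterman sheaf has no unipotent Frobenius stalks''. But $\kkappa=\kkappa'=\pm|\F|^{1/2}$ does \emph{not} make the Frobenius stalk unipotent: the (weight-zero normalized) Frobenius could equally well be the scalar $-1$, or a diagonalizable scalar $+1$, neither of which is a nontrivial unipotent. What is actually needed is the nontrivial statement that the two Frobenius eigenvalues of $\shKl$ at a closed point of $\G_m$ are always distinct (equivalently, that Kloosterman sums never attain the Weil bound); this is a genuine theorem, not a formal consequence of purity, and you neither prove it nor cite it. You are right to flag the nonvanishing $\Kl\ne 0$ as requiring a reference, but the strictness $|\Kl|<2|\F|^{1/2}$ has exactly the same status --- both halves of \eqref{eq.nonvanishing} are what the paper's citation of \cite{VdGVdV} Corollary 3.2 is for. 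To close the gap, simply defer \emph{both} inequalities to that reference (or to Katz's result that Kloosterman angles never lie at $0$ or $\pi$), rather than attempting a heuristic sheaf-theoretic shortcut.
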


\begin{proof}
The reader can confer \cite[Chap. 5, \S5]{LidlN}  and \cite[\S3]{VdGVdV} for proofs of these 
classical results about Kloosterman sums: \eqref{kloo.item1} and \eqref{kloo.item3} are easily checked; 
items \eqref{kloo.item2},  \eqref{kloo.item4} and \eqref{kloo.item5} 
are Theorems 5.47, 5.43 and 5.44 in  \cite{LidlN}, respectively; \eqref{kloo.item7} is proved in Corollary 3.2 of  \cite{VdGVdV}. 
\ProofEnd
\end{proof}

\subsection[Normalisation of Kloosterman sums]{The sums $\Kln_\gamma(v)$} 
\label{subsec.kloos.norm}

Assume a parameter $\gamma\in\F_q^\times$ is given. 
A place $v\neq 0, \infty$ of $K$ with degree $d_v$ 
corresponds to a monic irreducible polynomial $B_v \in\F_q[t]$ of degree $d_v$, with $B_v\neq t$. 
Choose a root $\beta_v\in\bar{\F_q}^\times$ of $B_v$: we claim that the value of the Kloosterman sum $\Kl_{\F_v}(\psi_{\F_v}; \gamma\beta_v^2)$ does not depend on the choice of $\beta_v$. 
Indeed, given one such $\beta_v$ the $d_v-1$ other choices are of the form ${\beta_v}^{q^{j}}$ (with $j\in\{1, 2, \dots, d_v-1\}$) because the $d_v$ different roots of $B_v$ in $\bar{\F_q}$ are all conjugate under the action of the Galois group $\Gal(\F_v/\F_q)$. 
A repeated application of  Proposition \ref{prop.kloo}\eqref{kloo.item3} proves the claim.
Therefore, the following definition makes sense:  

\begin{defi} 
Let $\F_q$ be a finite field of characteristic $p$, $\psi_q$ be a nontrivial additive character on $\F_q$ and~$\gamma\in\F_q^\times$. For any place $v\neq 0, \infty$ of $K=\F_q(t)$ corresponding to a monic irreducible $B_v\in\F_q[t]$, we let 
\begin{equation}\label{eq.defi.normkloo}
\Kln_{\gamma}(v) := \Kl_{\F_v}(\psi_{\F_v};\gamma \beta_v^2) = -\sum_{x\in\F_v^\times}\psi_q\circ\trace_{\F_v/\F_q}\left(x+\frac{\gamma\cdot \beta_v^2}{x}\right),
\end{equation}
for any choice of $\beta_v\in\bar{\F_q}^\times$ such that $B_v(\beta_v)=0$. 
\end{defi}
Note that $\Kln_\gamma(v)$ depends on $\F_q$ and $\psi_q$,  but we chose not to include these in the notation for brevity.

\paragraph{}
For any place $v\neq0, \infty$, Proposition \ref{prop.kloo}\eqref{kloo.item4}-\eqref{kloo.item5} shows that there exist a unique pair $\{\kkappan_\gamma(v),\kkappan'_\gamma(v)\}$ of conjugate algebraic integers, which have magnitude $|\F_v|^{1/2} = q^{d_v/2}$ in any complex embedding and such that
\begin{equation}\label{eq.defi.kkappa}
\Kln_\gamma(v) 
= \kkappan_\gamma(v) +\kkappan'_\gamma(v).
\end{equation}
In other words, we denote by $\{\kkappan_\gamma(v),\kkappan'_\gamma(v)\}$ the pair of algebraic integers $\{\kkappa_{\F_v}(\psi_{\F_v};\gamma\beta_v^2), \kkappa'_{\F_v}(\psi_{\F_v};\gamma\beta_v^2)\}$. 

\begin{rema} \label{rem.kloos.norm}
These sums $\Kln_\gamma(v)$ appear in the zeta function of a curve over $\F_q$. 
Namely, consider the hyperelliptic curve $\Ccal_{a, \gamma}$ over $\F_q$ defined as a smooth projective model of the affine curve $\wp_a(y)= x+\gamma/x$. 
A computation, which probably goes back to Weil, shows that the zeta function of $\Ccal_{a, \gamma}/\F_q$ is given by 
\[Z(\Ccal_{a, \gamma}/\F_q; U) = \frac{\prod_{v}\left(1- \kkappan_\gamma(v)\cdot U^{d_v}\right)\left(1- \kkappan'_\gamma(v)\cdot U^{d_v}\right) }{(1-U)(1-q\cdot U)},\]
where the product is over all places $v\neq 0, \infty$ of $K$ whose degrees divide $a$ (see \cite{VdGVdV}). 
\end{rema}
 
\subsection[Number of solutions of ``Artin-Schreier'' equations]{Number of solutions of ``Artin-Schreier'' equations}

In the process of computing the $L$-function of $E_{a, \gamma}$, we will need the following ``counting lemma'':

\begin{lemm}\label{lemm.solcount} 
Let $\F_q$ be a finite field and $\psi_q$ be a nontrivial additive character on $\F_q$. 
For any finite extension $\F$  of $\F_q$ and  any $z\in\F$, one has
\begin{equation}
\left|\{\tau\in\F : \wp_a(\tau)=z\}\right| 
=\sum_{\beta\in \F_{q^a}\cap \F} 
\psi_{\F}(\beta \cdot z).
\end{equation}
\end{lemm}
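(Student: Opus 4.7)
Write $\F=\F_{q^n}$ and set $L:=\F_{q^a}\cap\F=\F_{q^{\gcd(a,n)}}$. The idea is to compute both sides of the claimed identity and check that they match up via a criterion describing the image of $\wp_a$ in terms of a trace. My plan is to perform each side separately and then invoke an additive ``Hilbert 90'' argument to glue them.

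First, observe that $\wp_a:\F\to\F$ is $\F_q$-linear (indeed, $\wp_a$ is $L$-linear, since $\frob_q^a$ fixes $L$), and that its kernel is exactly the subfield $L=\F_{q^a}\cap\F$. Thus the left-hand side is $|L|$ if $z$ lies in the image of $\wp_a|_{\F}$, and is $0$ otherwise. For the right-hand side, note that for any $\beta\in L$ one has
\[
\psi_\F(\beta z)=\psi_q\bigl(\trace_{\F/\F_q}(\beta z)\bigr)
=\psi_q\bigl(\trace_{L/\F_q}(\beta\cdot\trace_{\F/L}(z))\bigr)
=\psi_L\bigl(\beta\cdot\trace_{\F/L}(z)\bigr),
\]
using that $\trace_{\F/L}$ is $L$-linear. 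Setting $w:=\trace_{\F/L}(z)\in L$, the right-hand side becomes $\sum_{\beta\in L}\psi_L(\beta w)$, and the standard orthogonality of additive characters on the finite field $L$ (together with the fact that $\psi_L=\psi_q\circ\trace_{L/\F_q}$ is a nontrivial character of $L$) yields that this sum equals $|L|$ if $w=0$ and $0$ otherwise.

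It remains to check that the two conditions ``$z\in\wp_a(\F)$'' and ``$\trace_{\F/L}(z)=0$'' are equivalent. This is the heart of the argument. Let $\phi:=\frob_q^a$ acting on $\F$, so that $\wp_a=\phi-\mathrm{id}$. Set $m:=n/\gcd(a,n)$; then $\phi$ has order $m$ as an automorphism of $\F$ and its fixed field is exactly $L$, so $\F/L$ is cyclic Galois of degree $m$ generated by $\phi$, and
\[
\trace_{\F/L}=\mathrm{id}+\phi+\phi^2+\dots+\phi^{m-1}.
\]
The additive form of Hilbert's Theorem 90 for cyclic extensions then gives the exact sequence
$0\to L\to \F\xrightarrow{\phi-\mathrm{id}} \F\xrightarrow{\trace_{\F/L}} L\to 0$, so in particular $\mathrm{Im}(\wp_a|_\F)=\ker\bigl(\trace_{\F/L}\bigr)$, which is precisely what is needed.

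Combining these three steps, both sides of the claimed identity are equal to $|L|$ when $\trace_{\F/L}(z)=0$ and to $0$ otherwise, proving the lemma. The main point to get right is the identification of $\mathrm{Im}(\wp_a)$ via additive Hilbert 90; all other steps are elementary applications of character orthogonality and the tower formula for the trace.
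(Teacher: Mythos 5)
Your proposal is correct, and its overall structure matches the paper's proof: both reduce the left-hand side to deciding whether $z$ lies in the image of $\wp_a|_\F$, both evaluate the right-hand side via character orthogonality on $L := \F_{q^a}\cap\F$ (after pushing $\psi_\F$ down through the trace tower to $\psi_L$) so that it detects whether $\trace_{\F/L}(z)=0$, and both then close the gap by establishing $\mathrm{Im}(\wp_a|_\F)=\ker\bigl(\trace_{\F/L}\bigr)$. The one place you diverge is in how you prove this last identification. You invoke the additive form of Hilbert's Theorem~90 for the cyclic extension $\F/L$ with $\phi=\frob_q^a$ generating $\Gal(\F/L)$ and $\wp_a=\phi-\mathrm{id}$, yielding the exact sequence $0\to L\to\F\xrightarrow{\phi-\mathrm{id}}\F\xrightarrow{\trace_{\F/L}}L\to 0$. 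The paper instead argues by dimension counting over $L$: $\wp_a$ is $L$-linear with kernel $L$, so its image has $L$-codimension $1$; the kernel of the surjective $L$-linear map $\trace_{\F/L}$ also has codimension $1$; since $\trace_{\F/L}\circ\wp_a=0$, one is contained in the other, hence they coincide. Your route is conceptually tidier and generalizes verbatim to any cyclic extension; the paper's dimension count is more elementary and entirely self-contained, avoiding any appeal to Galois cohomology. Either is perfectly acceptable here.
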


    \begin{proof}
     For the duration of the proof, we write $\F=\F_{q^n}$ and we let $b=\gcd(a,n)$; note that ${\F_{q^b}=\F_{q^a}\cap \F_{q^n}}$. 
     The map $\wp_a:\F_{q^n}\to\F_{q^n}$ is an $\F_{q^b}$-linear endomorphism of $\F_{q^n}$ whose kernel is $\F_{q^b}$. 
     In particular, the image of $\wp_a$ must have dimension $\dim_{\F_{q^b}}(\F_{q^n})-1=n/b-1$.
    The trace $t := \trace_{q^n/q^b} : \F_{q^n}\to\F_{q^b}$ is a surjective $\F_{q^b}$-linear map, so that its kernel $H$ is a sub-$\F_{q^b}$-vector space of $\F_{q^n}$ of dimension $n/b -1$. 
    Recall that $t\circ\wp_a = 0$ on $\F_{q^n}$, thus we have $\mathrm{Im}\,\wp_a\subset H$. 
    Since these two subspaces have the same dimension, they coincide. 
    This shows that, for $z\in\F_{q^n}$, 
    \[ \left|\{\tau\in\F_{q^n} : \wp_a(\tau)=z\}\right| 
    = \begin{cases}
    0 & \text{if } t(z)\neq0, \\ 
    |\ker\wp_a| = q^b   & \text{if } t(z)=0. 
    \end{cases} \]
    On the other hand, for a given $z\in\F_{q^n}$, we notice that
    \begin{align*}
    \sum_{\beta\in \F_{q^b}} \psi_{\F_{q^n}} (\beta \cdot z)
    &=\sum_{\beta\in \F_{q^b}} \psi_{\F_{q^b}}\circ \trace_{q^n/q^b} (\beta \cdot z)
    =\sum_{\beta\in \F_{q^b}} \psi_{\F_{q^b}}\circ\left( \beta \cdot \trace_{q^n/ q^b}(z)\right) 
    = \begin{cases}
    0 & \text{if } t(z)\neq0, \\ 
    q^b   & \text{if } t(z)=0, 
    \end{cases} 
    \end{align*}
    because $t=\trace_{q^n/q^b}$ is $\F_{q^b}$-linear. 
    Combining the two displayed equalities, we obtain the result.
    \ProofEnd\end{proof}

\subsection{Preliminary estimates}
\label{subsec.prelim}

For all $n\geq1$, we denote by $\pi_q(n)$ the number of places $v\neq 0, \infty$ of $K$ of degree $d_v=n$. 
Equivalently, $\pi_q(n)$ is the number of closed points of degree $n$ of the multiplicative group $\G_m$ over $\F_q$ \ie{}, the number of orbits of $\G_m(\bar{\F_q})$ under the action of $\Gal(\bar{\F_q}/\F_q)$ which have cardinality $n$.
We also let $P_q(n)$ be the set of closed points of~$\G_m$ whose degree divides $n$. 
Clearly, one has $|P_q(n)| = \sum_{d\mid n} \pi_q(d)$. 

In what follows, we will frequently need the following estimates, which we record here for convenience.

\begin{lemm}\label{lemm.estimates} Given a finite field $\F_q$, one has 
\begin{multicols}{2}
\begin{enumerate}[(i)]
\item\label{estim.item1} 
$q^n \ll_q n\cdot \pi_q(n) \ll_q q^n$ for all $n\geq 1$.
\item\label{estim.item2} 
$q^n \ll_q n\cdot |P_q(n)| \ll_q q^n$ for all $n\geq 1$.
\end{enumerate}
\end{multicols}
The involved constant are all effective and depend at most on $q$.
\end{lemm}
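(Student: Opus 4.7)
My plan rests on the classical mass formula $\sum_{d \mid n} d \cdot I_q(d) = q^n$, where $I_q(d)$ denotes the number of monic irreducible polynomials of degree $d$ in $\F_q[t]$. This identity expresses the decomposition of $\F_{q^n}$ into $\Gal(\F_{q^n}/\F_q)$-orbits, each of cardinality equal to the degree of the minimal polynomial of any of its elements. Since $t$ is the only monic irreducible polynomial excluded from the places counted by $\pi_q$, we have $\pi_q(n) = I_q(n)$ for $n \geq 2$ and $\pi_q(1) = q - 1$; it therefore suffices to estimate $I_q(n)$.

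For (i), I would isolate the $d = n$ term in the mass formula:
\[n \cdot I_q(n) = q^n - \sum_{d \mid n,\, d < n} d \cdot I_q(d).\]
The trivial bound $d \cdot I_q(d) \leq q^d$, combined with the observation that every proper divisor of $n$ is at most $n/2$, gives $\sum_{d \mid n,\, d < n} d \cdot I_q(d) \leq \sum_{d=1}^{\lfloor n/2 \rfloor} q^d \leq q^{n/2+1}/(q-1)$. Hence $n \cdot I_q(n) = q^n + O_q(q^{n/2})$, from which both inequalities in (i) follow for $n \geq 2$; the case $n = 1$ is immediate since $\pi_q(1) = q - 1$.

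For (ii), the lower bound is immediate from $|P_q(n)| \geq \pi_q(n)$ together with (i). For the upper bound, I would split
\[|P_q(n)| = \pi_q(n) + \sum_{d \mid n,\, d < n} \pi_q(d),\]
and use $\pi_q(d) \leq q^d/d \leq q^d$ to bound the secondary sum by the same geometric-series argument, obtaining $\sum_{d \mid n,\, d < n} \pi_q(d) \ll_q q^{n/2}$. Multiplying by $n$ and invoking the elementary inequality $n \leq q^{n/2}$ (valid for all $n \geq 1$ and $q \geq 2$) then yields $n \cdot |P_q(n)| \leq n \cdot \pi_q(n) + O_q(q^n) \ll_q q^n$. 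All constants are explicit and depend only on $q$. The argument presents no real obstacle; the only point requiring a little care is ensuring the constants remain effective, which is automatic from the geometric-series bounds above.
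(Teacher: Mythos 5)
Your proof is correct and follows essentially the same route as the paper, which simply cites the Prime Number Theorem for $\F_q[t]$ (and a reference to Brumer for explicit constants) for part (i) and then deduces (ii) from the relation between $\pi_q(n)$ and $|P_q(n)|$. You have just unwound the reference by deriving the PNT directly from the mass formula $\sum_{d\mid n} d\cdot I_q(d)=q^n$, which is the standard argument; the only tiny blemish is the aside that $n\leq q^{n/2}$ holds for all $q\geq 2$ (it fails at $q=2$, $n=3$), but this is harmless here since $p\geq 3$ and in any case the $\ll_q$ constant absorbs finitely many small $n$.
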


    \begin{proof} 
    The  ``Prime Number Theorem'' for $\F_q[t]$ directly implies \eqref{estim.item1}. 
    Being a bit more careful, one can even give explicit constants (as in \cite[Prop. 6.3]{Brumer} for example). 
    From this estimate and the relation between $\pi_q(n)$ and $|P_q(d)|$, one easily deduces \eqref{estim.item2}. 
    \ProofEnd\end{proof}

\section[The L-function]{The $L$-function}
\label{sec.Lfunc}   

For any integer $a\geq 1$, we denote by $P_q(a)$ the set of places $v$ of $K$, with $v\notin\{0, \infty\}$, whose degrees $d_v$   divide~$a$. 
 In the identification between finite places of $K$ and monic irreducible polynomials, $P_q(a)$ corresponds to $\left\{B\in\F_q[t] : B \text{ monic, irreducible s.t. } \deg B\mid a \text{ and }B\neq t \right\}$.
Equivalently, $P_q(a)$ is the set of closed points on $\G_m$ whose degree divides $a$.

 \paragraph{} We can now state the first main result of this article:
 
 \begin{theo}\label{theo.Lfunc} Let $\F_q$ be a finite field of odd characteristic $p$ and $K=\F_q(t)$. For any $\gamma\in\F_q^\times$ and any integer $a\geq 1$, consider the elliptic curve $E_{a, \gamma}/K$ given by \eqref{eq.Wmodel}. 
 The $L$-function of $E_{a, \gamma}$  is given by 
\begin{equation}\label{eq.Lfunc}
L(E_{a, \gamma}, T) 
= \prod_{v\in P_q(a)}\left(1-q^{d_v} \cdot T^{d_v}\right) \left(1-\kkappan_{\gamma}(v)^2 \cdot T^{d_v}\right)\left(1- \kkappan'_{\gamma}(v)^2 \cdot T^{d_v}\right), 
\end{equation}
 where 
 $\kkappan_{\gamma}(v)$, $\kkappan'_{\gamma}(v)$ are the algebraic integers associated to $\Kln_\gamma(v)$ (see \eqref{eq.defi.normkloo} 
 and \eqref{eq.defi.kkappa}).
 \end{theo}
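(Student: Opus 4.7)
My plan is to establish \eqref{eq.Lfunc} by computing the logarithmic power series of both sides and matching the coefficient of $T^n/n$ for every $n \geq 1$. On the left, this coefficient is $\sum_{d_v\mid n} d_v \cdot a_v^{(n/d_v)}$, where $a_v^{(m)}:=\alpha_v^m+\beta_v^m$ (with $\alpha_v,\beta_v$ the reciprocal roots of the local factor of $L(E_{a,\gamma},T)$ at $v$); on the right, it is $-\sum_{v\in P_q(a),\,d_v\mid n} d_v\bigl(q^n+\kkappan_\gamma(v)^{2m}+\kkappan'_\gamma(v)^{2m}\bigr)$ with $m := n/d_v$. First I would invoke the $2$-isogeny $E^\circ_{a,\gamma}\to E_{a,\gamma}$ from \S\ref{sec.family}: since it has degree prime to $\mathrm{char}\,K$, one has $L(E_{a,\gamma},T) = L(E^\circ_{a,\gamma},T)$, so it suffices to prove \eqref{eq.Lfunc} for $E^\circ_{a,\gamma}$. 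The advantage is that $E^\circ_{a,\gamma}$ is $K$-birational to the surface $\Zcal_{a,\gamma}$ defined by $f_\gamma(X) - f_\gamma(Y) = \wp_a(\tau)$ with $f_\gamma(W) := W + \gamma/W$: this equation is tailor-made for producing Kloosterman sums.

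The heart of the argument would be a point-count on the affine open $\Zcal^\circ_{a,\gamma}\subset\Zcal_{a,\gamma}$ consisting of the triples $(X, Y, \tau) \in (\F_{q^n}^\times)^2 \times \F_{q^n}$ satisfying the equation above. By additive orthogonality on $\F_{q^n}$, writing $\psi_n := \psi_q\circ\trace_{\F_{q^n}/\F_q}$,
\[ |\Zcal^\circ_{a,\gamma}(\F_{q^n})| = \frac{1}{q^n}\sum_{\alpha\in\F_{q^n}} \Bigl(\sum_{X\in\F_{q^n}^\times}\psi_n(\alpha f_\gamma(X))\Bigr) \Bigl(\sum_{Y\in\F_{q^n}^\times}\psi_n(-\alpha f_\gamma(Y))\Bigr) \Bigl(\sum_{\tau\in\F_{q^n}}\psi_n(-\alpha \wp_a(\tau))\Bigr). \]
The $\alpha = 0$ term contributes $(q^n-1)^2$. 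For $\alpha\neq 0$, the substitution $X \mapsto X/\alpha$ identifies both the $X$- and the $Y$-sum with $-\Kl_{\F_{q^n}}(\psi_n;\gamma\alpha^2)$, while Lemma~\ref{lemm.solcount} shows that the $\tau$-sum equals $q^n\cdot\mathbbm{1}_{\alpha\in\F_{q^b}}$, where $b := \gcd(a, n)$. I would then group the remaining sum $\sum_{\alpha\in\F_{q^b}^\times}\Kl_{\F_{q^n}}(\psi_n;\gamma\alpha^2)^2$ by $\Frob_q$-orbits in $\F_{q^b}^\times$ (which are in bijection with the places $v\in P_q(a)$ satisfying $d_v \mid n$, each of size $d_v$, by Proposition~\ref{prop.kloo}\eqref{kloo.item3}) and apply the Hasse--Davenport-like relation of Proposition~\ref{prop.kloo}\eqref{kloo.item4} to rewrite $\Kl_{\F_{q^n}}(\psi_n;\gamma\beta_v^2)^2 = \kkappan_\gamma(v)^{2m} + \kkappan'_\gamma(v)^{2m} + 2q^n$, producing a closed form for $|\Zcal^\circ_{a,\gamma}(\F_{q^n})|$ that features exactly the sum $\sum_{v\in P_q(a),\,d_v\mid n} d_v\bigl(\kkappan_\gamma(v)^{2m}+\kkappan'_\gamma(v)^{2m}\bigr)$ needed to match the right-hand side of \eqref{eq.Lfunc}.

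To conclude, I would compute $|\Zcal^\circ_{a,\gamma}(\F_{q^n})|$ in a second way, fibrewise over $\tau \in \F_{q^n}$: at a smooth fiber (those $\tau$ with $\wp_a(\tau)\neq 0$ and $\wp_a(\tau)^2\neq 16\gamma$), the affine count equals $q^n+1-a_\tau^{(n)}$ minus an explicit boundary term coming from the missing points with $X$ or $Y$ in $\{0, \infty\}$; for the singular fibers (at $\tau$ with $\wp_a(\tau)=0$, or $\wp_a(\tau)^2=16\gamma$) the affine count can be computed directly from the defining equation. Summing over $\tau$ then relates $|\Zcal^\circ_{a,\gamma}(\F_{q^n})|$ to $\sum_{d_v\mid n} d_v a_v^{(n/d_v)}$ plus an explicit polynomial in $q^n$ and $q^b$. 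Equating this with the expression obtained above yields the desired coefficient identity for every $n\geq1$, whence \eqref{eq.Lfunc}. I expect the main technical obstacle to lie in this boundary bookkeeping: the ``trivial'' contributions (components of the reducible fibers at $v\in P_q(a)$, the points at infinity of the $\P^1\times\P^1$-model of $Z_{a,\gamma}$, and the fiber over $\tau = \infty$) must be tracked carefully so that after cancellation they produce precisely the $(1-q^{d_v}T^{d_v})$ factors in \eqref{eq.Lfunc} and leave only the Kloosterman terms $\kkappan_\gamma(v)^{2m}+\kkappan'_\gamma(v)^{2m}$.
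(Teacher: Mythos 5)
Your route is genuinely different from the paper's and is, I believe, workable, but you leave its hardest half unexecuted. The paper computes $\sum_\tau A_{a,\gamma}(\tau, q^n)$ directly on $E_{a,\gamma}$: it expresses the fiber counts via the quadratic character $\lambda$, converts the $\tau$-sum with Lemma~\ref{lemm.solcount}, and then identifies the resulting double sum $M_\F(\beta,\gamma)$ with $\Kl_\F(\psi;\gamma\beta^2)^2 - |\F|$ (Proposition~\ref{prop.id.charsum}). That identity — whose proof rests on Sali\'{e}'s formula \eqref{eq.saliesformula} and the explicit affine bijection of Lemma~\ref{lemm.id.prelim} — is where the paper's real work sits, and the boundary term $M_\F(0,\gamma)=1$ cancels $A_{a,\gamma}(\infty, q^n)=1$ exactly, so nothing is left over. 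You instead count points on the model $Z_{a,\gamma}$ via additive orthogonality, which makes Kloosterman sums appear directly from $f_\gamma(W)=W+\gamma/W$ with no need for Sali\'{e} or Proposition~\ref{prop.id.charsum}. Your character-sum half is correct: the $(q^n-1)^2$ from $\alpha=0$, the substitution $X\mapsto X/\alpha$, the restriction to $\alpha\in\F_{q^{\gcd(a,n)}}^\times$ via Lemma~\ref{lemm.solcount}, and the Frobenius-orbit grouping using Proposition~\ref{prop.kloo}\eqref{kloo.item3}--\eqref{kloo.item4} all check out and land on an expression parallel to the paper's \eqref{eq.Lfunc.inter1}--\eqref{eq.Lfunc.inter2}.

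What you do not do is the geometric half. Relating $|\Zcal^\circ_{a,\gamma}(\F_{q^n})|$ fibrewise to $\sum_{\tau\in\P^1(\F_{q^n})} A_{a,\gamma}(\tau,q^n)$ requires (a) subtracting the four $\F_q$-rational corner points (with $X, Y\in\{0,\infty\}$), which one can check lie on every fiber of $Z_{a,\gamma}$ regardless of $\tau$; (b) direct counts at the bad affine fibers over the roots of $\wp_a(t)\cdot(\wp_a(t)^2-16\gamma)$ — for instance when $\wp_a(\tau)=0$ the fiber equation degenerates to $(x-y)(xy-\gamma)=0$, whose affine count depends on whether $\gamma$ is a square in $\F_{q^n}$; (c) reconciling those counts with the reductions of a minimal integral model of $E^\circ_{a,\gamma}$ (not of $Z_{a,\gamma}$) at those places; and (d) pinning down $A_{a,\gamma}(\infty,q^n)$ for $E^\circ_{a,\gamma}$, i.e., whether its multiplicative reduction at $\infty$ is split. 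You honestly flag this bookkeeping as the main obstacle, and it is: until it is carried out, the argument is a plausible plan rather than a proof. In short, the paper concentrates the difficulty into one clean algebraic identity with no leftover terms, while your approach trades that identity for a boundary- and bad-fiber accounting problem of comparable effort — worth doing if you want a proof that makes the geometric origin of the Kloosterman sums transparent, but not a shortcut.
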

 The proof of this theorem occupies the rest of the present section. 
Our strategy is loosely based on the computation in \cite[\S3.2]{Ulmer_legII}: to give an expression of $L(E_{a, \gamma}, T)$, 
we rely on an explicit ``point-counting'' argument with character sums.
This requires proving an identity between the character sums that appear in the argument and the Kloosterman sums introduced above. 
We first give in the next subsection a proof of this identity, and then prove Theorem \ref{theo.Lfunc} in \S\ref{subsec.Lfunc.proof}.

\begin{rema} Before proving this theorem, we note the following:
\begin{enumerate}[(1)]
\itemsep0.1em
\item Even though the sums $\Kln_\gamma(v)$ for $v\in P_q(a)$ depend on a choice of nontrivial additive character $\psi_q$ on $\F_q$, the $L$-function $L(E_{a, \gamma}, T)\in\Z[T]$ does not. 
Indeed, changing the choice of $\psi_q$ amounts to permuting the factors in \eqref{eq.Lfunc}.

\item Note that $\sum_{v\in P_q(a)} d_v = |\G_m(\F_{q^a})| = q^a-1$. 
 Thus, as a polynomial in $T$, the $L$-function $L(E_{a, \gamma}, T)$ has degree $3(q^a-1) = \deg \cond(E_{a, \gamma})-4$. This is consistent with the expected degree  (see \S\ref{subsec.Lfunc.defi}).


\item For any integer $a\geq 1$, the $L$-function of the base change of  $E_{a, \gamma}$ to $K_a:=\F_{q^a}(t)$ admits a somewhat simpler expression.
Indeed, for all $\gamma \in \F_q^\times$ and $a\geq 1$, the $L$-function of $E_{a, \gamma}/K_a$ is given by 
\begin{equation}
L(E_{a, \gamma}/K_a, T) 
= \prod_{\beta\in\F_{q^a}^\times}\left(1-q^{a} \cdot T\right) \left(1-(\kkappa_{\beta})^2 \cdot T\right)\left(1- (\kkappa'_{\beta})^2 \cdot T\right), 
\end{equation}
 where, for all $\beta\in\F_{q^a}^\times$,   $\kkappa_{\beta}$ and  $\kkappa'_{\beta}$ are the two algebraic integers associated to the Kloosterman sum~$\Kl_{\F_{q^a}}(\psi_{\F_{q^a}};\gamma\beta^2)$.  
 This follows directly from \eqref{eq.Lfunc}.

\item Recall the elliptic curves $E^\circ_{a, \gamma}$ introduced at the beginning of \S\ref{sec.family} and given by \eqref{eq.Wmod.X}. 
Since isogenous elliptic curves share the same $L$-function (by \cite[Chap. I, Lemma 7.1]{Milne_ArithDual}), Theorem \ref{theo.Lfunc} also shows that the $L$-function of $E^\circ_{a, \gamma}$ is given by \eqref{eq.Lfunc}. 

\item It is perhaps illuminating to comment on the appearance of Kloosterman sums in $L(E_{a, \gamma}, T)$. 
Choosing a prime $\ell\neq p$, we denote by $\H^i(X)=\H^i_{\text{\'{e}t}}(X \times_{\F_q} \bar{\F_q}, \Q_\ell)$ the $i$-th $\ell$-adic \'{e}tale cohomology group of a smooth projective variety $X/\F_q$.
Let $\Ecal^\circ_{a, \gamma}\to\P^1$ denote the minimal regular model of $E^\circ_{a, \gamma}$. 
By Grothendieck cohomological interpretation of $L$-functions, $L(E^\circ_{a, \gamma}, T)$ is  essentially the ``interesting part'' of the zeta function of the surface $\Ecal_{a, \gamma}/\F_q$ \ie{}, $L(E^\circ_{a, \gamma}, T)$ is a factor of the characteristic  polynomial of the Frobenius $\frob_q$ acting on $H^2(\Ecal^\circ_{a, \gamma})$.

By the construction of $E^\circ_{a, \gamma}$ in \S\ref{sec.family}, $\Ecal^\circ_{a, \gamma}$ is a smooth model of a quotient of $\Ccal_{a, \gamma} \times \Ccal_{a, \gamma}$ by the action of a certain finite group $G_a$, where $\Ccal_{a, \gamma}/\F_q$ is the curve introduced in  Remark \ref{rem.kloos.norm}   (see \cite[\S7.3]{UlmerPries} for a more detailed presentation). 
In particular, $\H^2(\Ecal^\circ_{a, \gamma})$ can be seen as a subspace of $\H^2((\Ccal_{a, \gamma} \times \Ccal_{a, \gamma})/G_a)$, itself a subspace of $\H^2(\Ccal_{a, \gamma} \times \Ccal_{a, \gamma})$. 
Hence, by Künneth's formula, $L(E^\circ_{a, \gamma}, T)$ divides the characteristic  polynomial of $\frob_q$ acting on $\H^1(\Ccal_{a, \gamma}) \otimes \H^1(\Ccal_{a, \gamma})$.
 
As was noted in Remark \ref{rem.kloos.norm}, the numerator of the zeta function of $\Ccal_{a, \gamma}$ \ie{}, the characteristic polynomial of $\frob_q$ acting on $\H^1(\Ccal_{a, \gamma})$, involves Kloosterman sums $\Kln_\gamma(v)$.
Hence, the eigenvalues of $\frob_q$ acting on $\H^1(\Ccal_{a, \gamma}) \otimes \H^1(\Ccal_{a, \gamma})$ are products of the form $\Kln_\gamma(v_1)\Kln_\gamma(v_2)$. 
Being a factor of the characteristic polynomial of $\frob_q$ acting on $\H^1(\Ccal_{a, \gamma}) \otimes \H^1(\Ccal_{a, \gamma})$, the $L$-function $L(E^\circ_{a, \gamma}, T)$ has to involve some of the products $\Kln_\gamma(v_1)\Kln_\gamma(v_2)$.

Working out the details of this 
sketchy computation could lead to a different proof of Theorem~\ref{theo.Lfunc}.

\end{enumerate}
\end{rema}

\subsection{An identity between character sums}
\label{subsec.id.charsum}

 For any finite finite field $\F$ of odd characteristic, we fix a nontrivial additive character $\psi$ on $\F$, and we denote by $\lambda:\F^\times\to\{\pm1\}$ the unique quadratic character on $\F^\times$, extended to the whole of $\F$ by $\lambda(0):=0$.  

 For any~$\gamma\in \F^\times$ and $\beta\in \F$, we consider the following double character sum
 \[M_{\F}(\beta, \gamma) := \sum_{x\in \F}\sum_{z\in \F} 
  \lambda\left( x(x+16\gamma)(x+z^2)\right)\cdot\psi(\beta z),\]
  Up to a trivial term, we identify the character sum $M_{\F}(\beta, \gamma)$ as the square of a Kloosterman sum:
\begin{prop} 
\label{prop.id.charsum}
 Notations being as above, we have
 \begin{equation}
 M_\F(\beta, \gamma) =
\begin{cases}
1 & \text{if }\beta=0 \\
\Kl_{\F}\big(\psi ; \gamma\cdot \beta^2\big)^2-|\F|
& \text{if }\beta\neq0.
\end{cases}
  \end{equation}
\end{prop}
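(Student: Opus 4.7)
The plan is to split into the cases $\beta = 0$ and $\beta \neq 0$, and in the latter case to iterate Sali\'{e}'s formula \eqref{eq.saliesformula} so as to reduce $M_\F(\beta,\gamma)$ to a closed form via a substitution trick that also evaluates $\Kl_\F(\psi;\gamma\beta^2)^2$.

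For $\beta = 0$, a short direct computation gives $\sum_{z\in\F}\lambda(z^2+x) = -1$ for every $x\in\F^\times$ (split $z\in\F^\times$ into squares, invoke $\sum_{b\in\F^\times}\lambda(1+x/b) = -1$), and similarly $\sum_{x\in\F}\lambda(x(x+16\gamma)) = -1$ by completing the square $x\mapsto y-8\gamma$ and applying the previous identity. Since the degenerate values $x=0$ and $x=-16\gamma$ contribute $0$ through $\lambda(0)=0$, the two identities combine to give $M_\F(0,\gamma) = -(-1) = 1$.

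For $\beta\neq 0$, substituting $y=\beta z$ in Sali\'{e}'s formula (which is bijective since $\beta\neq 0$) yields $\sum_z\lambda(z^2+x)\psi(\beta z) = -\Kl_\F(\psi;-x\beta^2/4)$ for every $x\in\F^\times$. Expanding this Kloosterman sum via its definition and swapping summations reduces matters to evaluating $\sum_x\lambda(x(x+16\gamma))\psi(cx)$ with $c = -\beta^2/(4w)$, $w\in\F^\times$. Completing the square via $x = y - 8\gamma$ and applying Sali\'{e}'s formula a \emph{second} time (after scaling $y\mapsto cy$) evaluates this $x$-sum as $-\psi(-8\gamma c)\cdot\Kl_\F(\psi; 16\gamma^2 c^2)$. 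Substituting $c = -\beta^2/(4w)$ back in, then using the bijection $u = (\gamma\beta^2/w)v$ to expand $\Kl_\F(\psi;(\gamma\beta^2/w)^2) = -\sum_{v\in\F^\times}\psi((\gamma\beta^2/w)(v+1/v))$, and finally applying the identity $v+1/v+2 = (v+1)^2/v$ collapses everything into
\[ M_\F(\beta,\gamma) \;=\; \sum_{v,w\in\F^\times}\psi\!\left(w + \frac{\gamma\beta^2(v+1)^2}{wv}\right). \]
Isolating the contribution from $v=-1$ (which equals $\sum_{w\in\F^\times}\psi(w) = -1$) and recognizing, for each $v \neq -1$, the inner sum over $w$ as $-\Kl_\F(\psi;\gamma\beta^2(v+1)^2/v)$ gives $M_\F(\beta,\gamma) = -1 - \sum_{v\in\F^\times\setminus\{-1\}}\Kl_\F(\psi;\gamma\beta^2 (v+1)^2/v)$.

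It then suffices to establish the auxiliary identity
\[\sum_{t\in\F^\times\setminus\{-1\}}\Kl_\F(\psi;\alpha(t+1)^2/t) \;=\; |\F|-1 - \Kl_\F(\psi;\alpha)^2 \qquad (\alpha\in\F^\times), \]
which is proved by precisely the same substitution trick: expand $\Kl_\F(\psi;\alpha)^2 = \sum_{x,y\in\F^\times}\psi(x+\alpha/x+y+\alpha/y)$, substitute $y=tx$ (bijective on $(\F^\times)^2$), separate $t=-1$ (contributing $|\F|-1$), and evaluate the remaining $x$-sum via $s = (1+t)x$ as a Kloosterman sum. Applying this with $\alpha = \gamma\beta^2$ and combining with the previous display gives the desired $M_\F(\beta,\gamma) = \Kl_\F(\psi;\gamma\beta^2)^2 - |\F|$. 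The main technical care throughout lies in verifying that each substitution is genuinely bijective (which is where $\beta, w, 1+t \neq 0$ are crucial) and that every boundary term coming from $\lambda(0)=0$ or from excluded values of the summation variable is correctly accounted for; the algebraic manipulations themselves are routine completions of the square.
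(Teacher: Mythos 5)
Your proof is correct: I verified every step, including the boundary cases at $x=0$, $x=-16\gamma$, $v=-1$, $t=-1$, and the bijectivity hypotheses for the substitutions $y=\beta z$, $y'=cy$, $u=\alpha v$, $y=tx$, $s=(1+t)x$, all of which hinge on $\beta\neq 0$, $w\neq 0$, $1+t\neq 0$ exactly as you flag.

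However, your route is genuinely different from the paper's. The paper proceeds in the ``opposite direction'': it applies Sali\'e's formula once to expand $\Kl_\F(\psi;\gamma\beta^2)^2$ as a double sum over $(u_1,u_2)$, regroups via $z=u_1+u_2$, and then invokes the geometric point-counting Lemma~\ref{lemm.id.prelim}, whose content is an explicit bijection between the affine slice $Y_1$ of $y^2=x(x+16\gamma)(x+z^2)$ and a conic-like set $X_{\F,\gamma}(z)$; this converts the inner character sum into $|X_{\F,\gamma}(z)|-|\F|$, after which multiplying by $\psi(\beta z)$ and summing gives $M_\F$. You instead start from $M_\F$, apply Sali\'e twice (once in the $z$-variable with the substitution $y=\beta z$, once in the $x$-variable after completing the square), fold the two resulting Kloosterman sums together via the algebraic identity $v+1/v+2=(v+1)^2/v$, and finish with a separate ``quadratic moment'' identity $\sum_{t\neq 0,-1}\Kl_\F(\psi;\alpha(t+1)^2/t)=|\F|-1-\Kl_\F(\psi;\alpha)^2$, itself proved by the substitution $y=tx$, $s=(1+t)x$. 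Both proofs lean on Sali\'e as the bridge between quadratic characters and Kloosterman sums, but the paper's version replaces your auxiliary identity with a point-counting bijection that makes the appearance of the cubic $x(x+16\gamma)(x+z^2)$ — i.e., the Weierstrass equation — structurally visible, which fits the surrounding ``count points to get the $L$-function'' narrative. Your argument is more of a self-contained exercise in character-sum gymnastics: it avoids constructing the bijection, at the cost of inventing the auxiliary Kloosterman identity (which is a pleasant fact in its own right and could merit being singled out). Either approach works; the paper's is arguably more illuminating as to \emph{why} Kloosterman sums appear, while yours is shorter once the auxiliary identity is granted.
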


In order to prove this identity, we begin by recording the following ``point-counting'' lemma:

\begin{lemm}\label{lemm.id.prelim} For any $z\in \F$ and $\gamma\in \F^\times$, consider 
\[ X_{\F, \gamma}(z) :=\left\{ (u,v)\in \F^2 : v^2 - (u^2-uz-4\gamma) v + \gamma z^2 = 0\right\} \subset \F^2.\]
We have
\begin{equation}\label{eq.id.prelim}
\left| X_{\F, \gamma}(z) \right| = |\F|\cdot (1+\delta_{z,0})- 1  + \sum_{x\in \F}\lambda\left(x(x+16\gamma)(x+z^2)\right),
\end{equation}
where $\delta_{z,0}=1$ if $z=0$ and $\delta_{z,0}=0$ otherwise.
\end{lemm}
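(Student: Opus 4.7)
The plan is to reduce $|X_{\F,\gamma}(z)|$ to a Legendre-symbol sum and then match it to the sum appearing on the right-hand side of \eqref{eq.id.prelim} via the recognition of a $2$-isogeny. Treating the defining equation of $X_{\F,\gamma}(z)$ as a quadratic in $v$ and completing the square with $V := 2v - (u^2 - uz - 4\gamma)$ turns it into $V^2 = \Delta(u)$, where $\Delta(u) := (u^2 - uz - 4\gamma)^2 - 4\gamma z^2$. For each $u \in \F$, the number of $v \in \F$ satisfying the equation is $1 + \lambda(\Delta(u))$ (with the convention $\lambda(0) := 0$), so $|X_{\F,\gamma}(z)| = |\F| + \sum_{u \in \F} \lambda(\Delta(u))$. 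A direct expansion then establishes the pivotal factorization
\begin{equation*}
\Delta(u) = (u^2 - 4\gamma)\bigl((u-z)^2 - 4\gamma\bigr).
\end{equation*}

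The case $z = 0$ is immediate from this factorization: $\Delta(u) = (u^2 - 4\gamma)^2$ is a perfect square at each $u$ with $u^2 \neq 4\gamma$, of which there are $1 + \lambda(\gamma)$ exceptions; one reads off $|X_{\F,\gamma}(0)| = 2|\F| - 1 - \lambda(\gamma)$, which matches \eqref{eq.id.prelim} with $\delta_{z,0} = 1$.

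For $z \neq 0$, the crucial substitution is $x := -u(u-z)$. Using the factorization of $\Delta$ together with $u^2 + (u-z)^2 = z^2 - 2x$, a short computation gives $\Delta(u) = (x + 4\gamma)^2 - 4\gamma z^2$; moreover each $x \in \F$ has exactly $1 + \lambda(z^2 - 4x)$ preimages $u$ (as roots of $u^2 - uz + x = 0$). Splitting the sum over $u$ according to these preimage counts yields
\begin{equation*}
\sum_{u \in \F} \lambda(\Delta(u)) = \sum_{x \in \F} \lambda\bigl((x+4\gamma)^2 - 4\gamma z^2\bigr) + \sum_{x \in \F} \lambda\bigl((z^2 - 4x) \cdot ((x+4\gamma)^2 - 4\gamma z^2)\bigr).
\end{equation*}
The first term equals $-1$ by the classical formula for the character sum of a nondegenerate binary quadratic (since $4\gamma z^2 \neq 0$). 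In the second, a further substitution $x \mapsto (z^2 - x)/4$ (sum-preserving, as $\lambda(16) = 1$), combined with the algebraic identity $(z^2 + 16\gamma)^2 - 64\gamma z^2 = (z^2 - 16\gamma)^2$, transforms the sum into
\begin{equation*}
\sum_{x \in \F} \lambda\bigl(x \cdot \bigl(x^2 - 2(z^2 + 16\gamma)\, x + (z^2 - 16\gamma)^2\bigr)\bigr).
\end{equation*}

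Finally, the cubic $x(x^2 - 2(z^2 + 16\gamma)x + (z^2 - 16\gamma)^2)$ is, by the standard explicit $2$-isogeny formula, the Weierstrass equation of the elliptic curve $\hat E / \F$ that is $2$-isogenous to ${E : y^2 = x(x + 16\gamma)(x + z^2) = x(x^2 + (z^2 + 16\gamma)x + 16\gamma z^2)}$ through the isogeny with kernel $\langle (0,0) \rangle$. Since isogenous elliptic curves over $\F$ share the same number of $\F$-rational points, the associated Legendre-symbol sums coincide, so the second sum above equals $\sum_{x \in \F} \lambda\bigl(x(x+16\gamma)(x+z^2)\bigr)$. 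Piecing everything together then gives the claim. The main technical obstacle is exactly this $2$-isogeny identification, which rests on the fortunate algebraic identity $(z^2 + 16\gamma)^2 - 64\gamma z^2 = (z^2 - 16\gamma)^2$ that makes the cubic produced by the substitutions fall into the exact Weierstrass shape of $\hat E$.
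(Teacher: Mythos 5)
Your proof takes a genuinely different route from the paper's. The paper directly constructs an explicit bijection between $X_1 := \{(u,v)\in X_{\F,\gamma}(z) : v\neq 0\}$ and $Y_1 := \{(x,y) : y^2 = x(x+16\gamma)(x+z^2),\, x\neq 0\}$ via $(u,v)\mapsto(4v,\,4v(2u-z))$, which immediately transports the point count; the argument never invokes anything about elliptic curves or isogenies. You instead reduce $|X_{\F,\gamma}(z)|$ to $\sum_u \lambda(\Delta(u))$, change variables $x=-u(u-z)$, split the resulting sum, and identify the leftover piece with $\sum_x\lambda\bigl(x(x+16\gamma)(x+z^2)\bigr)$ by recognizing the cubic $x\bigl(x^2-2(z^2+16\gamma)x+(z^2-16\gamma)^2\bigr)$ as the Weierstrass model of the $2$-isogenous curve. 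The algebra along the way (the factorisation of $\Delta$, the substitution, the identity $(z^2+16\gamma)^2-64\gamma z^2=(z^2-16\gamma)^2$) is all correct, and the $z=0$ case checks out.

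There is, however, a genuine gap in the final step. The assertion that the two character sums agree relies on ``isogenous elliptic curves over $\F$ have the same number of $\F$-points,'' which requires both cubics to be nonsingular. When $z^2=16\gamma$ (which occurs for $z=\pm4\sqrt{\gamma}$ whenever $\gamma$ is a square in $\F^\times$), the cubic $x(x+16\gamma)(x+z^2)=x(x+16\gamma)^2$ and its partner $x^2(x-64\gamma)$ are both singular, so neither defines an elliptic curve and the isogeny argument does not apply. The identity still holds there --- both sums evaluate to $-\lambda(-\gamma)$ --- but this must be verified separately, e.g.\ by the same direct manipulation you used for $z=0$. Since Proposition~\ref{prop.id.charsum} sums over all $z\in\F$, these degenerate $z$ cannot simply be ignored. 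Once you patch this boundary case, your argument is a valid alternative to the paper's bijective proof.
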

  
    \begin{proof} We split the set $X_{\F, \gamma}(z) $ into two disjoint parts $X_0:=\left\{(u,v)\in X_{\F, \gamma}(z)  : v=0\right\}$ and $X_1 := X_{\F, \gamma}(z) \smallsetminus X_0$.
    Computing $|X_0|$ is straightforward: if $z=0$, any pair $(u,0)$ with $u\in\F$  belongs to $X_0$ and, if $z\neq0 $, no such pair belongs to $X_{\F, \gamma}(z)$. 
    Therefore, we have $|X_0| = |\F|\cdot \delta_{z,0}$.
    
    To count the number of elements in $X_1$, let us introduce an auxiliary set
    \[ Y_{\F, \gamma}(z) := \left\{(x,y)\in\F^2 : y^2=x(x+16\gamma)(x+z^2)\right\} \subset \F^2,\]
    which we also split into two parts: 
    $Y_0:=\left\{ (x,y)\in Y_{\F, \gamma}(z): x=0\right\}$ and $ Y_1 := Y_{\F, \gamma}(z) \smallsetminus Y_0$.
    The two maps
    \[ \begin{array}{cccc c cccc}
     & X_1 & \longrightarrow & Y_1 & \quad \text{ and } \quad \phantom{.}&  
      & Y_1 & \longrightarrow & X_1 \\ 
        & (u,v) & \longmapsto & \left(4v, 4v(2u-z)\right) & &
        & (x,y) & \longmapsto & \displaystyle \left(\frac{y+z x}{2x}, \frac{x}{4}\right).
     \end{array} \]
    are easily checked to be well-defined and inverse to each other. Thus, we have $|X_1|=|Y_1|$ and the Lemma will be proved once we have related $|Y_1|$ to the character sum in \eqref{eq.id.prelim}. Grouping points $(x,y)\in Y_1$ according to their $x$-coordinates, we obtain that:
    \[|Y_1|
    = \sum_{x\in \F^\times} \left( 1+ \lambda\big(x(x+16\gamma)(x+z^2)\big) \right)\\
    = |\F| -1 + \sum_{x\in \F}  \lambda\big(x(x+16\gamma)(x+z^2)\big).\]
    Since $\left|X_{\F, \gamma}(z)\right|  =   |X_0| + |Y_1|$, we conclude that relation \eqref{eq.id.prelim} holds.
    \ProofEnd\end{proof}

    \begin{proof}[of Proposition \ref{prop.id.charsum}] 
    We treat the case where $\beta=0$ first: we have to prove that
    \[\sum_{x\in \F}\sum_{z\in \F}   \lambda\left( x(x+16\gamma)(x+z^2)\right) = 1.\]
    This identity follows easily upon using the following equality (see \cite[Thm. 5.48]{LidlN}):
    \[\forall b,c\in \F, \quad \sum_{y\in\F} \lambda(y^2 + b y +c) = \begin{cases}
    |\F|-1 & \text{if } b^2=4c, \\ 
     -1 & \text{otherwise.} \end{cases}
     \]
    It remains to prove the Proposition in the case where $\beta\neq 0$.
    If $\beta\neq 0$, the character $x\mapsto\psi(\beta x)$ is nontrivial, and we can
    expand $\Kl_{\F}\big(\psi, \gamma \beta^2\big)^2$ as a double character sum  using Sali\'{e}'s formula \eqref{eq.saliesformula}: we obtain that
    \begin{align}
    \Kl_{\F}\big(\psi, \gamma \beta^2\big)^2
    &= \left(\sum_{u\in \F}\lambda(u^2-4\gamma)\cdot\psi(\beta u)\right)^2
    = \sum_{u_1\in \F}\sum_{u_2\in \F}\lambda\left((u_1^2-4\gamma)(u_2^2-4\gamma)\right)\cdot\psi\big(\beta(u_1+u_2)\big) \notag \\ 
    &= \sum_{z\in \F} \left(\sum_{u\in \F}\lambda\left((u^2-4\gamma)((u-z)^2-4\gamma)\right)\right)\cdot\psi(\beta z).\label{eq.id.inter1}
    \end{align}
    The last line follows from the first by letting $(u, z) =(u_1, u_1+u_2)$.  
    For a given $z\in\F$, notice that
    \[ \forall u \in \F, \qquad (u^2-4\gamma)((u-z)^2-4\gamma) = (u^2-zu-4\gamma)^2 - 4\gamma z^2,  \]
    which is the discriminant of the quadratic equation
    $V^2 -(u^2-zu-4\gamma)\cdot V + \gamma z^2=0$
    with unknown $V$. Hence, the number of solutions $v\in \F$ to this equation is given by
    \[  \left|\left\{ v\in \F  : v^2 -(u^2-zu-4\gamma)\cdot v +\gamma z^2=0 \right\}\right| 
    = 1 +\lambda\left( (u^2-zu-4\gamma)^2 - 4\gamma z^2\right).\]
    Therefore we can  rewrite the inner sum (over $u\in \F$) in \eqref{eq.id.inter1} as
     \begin{align*}
    \sum_{u\in \F}\lambda\left((u^2-4\gamma)((u-z)^2-4\gamma)\right)
    &= -|\F|+\sum_{u\in \F} \left(1+ \lambda\left( (u^2-zu-4\gamma)^2 - 4\gamma z^2\right) \right) \\
    &= - |\F| +\sum_{u \in \F } \left| \left\{ v\in \F  : v^2 -(u^2-zu-4\gamma)\cdot v +\gamma z^2=0 \right\}\right| \\
    &= -|\F|+\left| X_{\F, \gamma}(z)\right|,
    \end{align*}
    where $ X_{\F, \gamma}(z)$ is the set introduced in Lemma \ref{lemm.id.prelim}. In that same Lemma \ref{lemm.id.prelim}, we have proved that 
    \[\forall z\in \F, \qquad \left| X_{\F, \gamma}(z)\right| - |\F|  = |\F|\cdot \delta_{z,0}- 1  + \sum_{x\in \F}\lambda\left(x(x+16\gamma)(x+z^2)\right).\]
    On multiplying this identity by $\psi(\beta z)$ and summing over all $z\in \F$, we deduce from \eqref{eq.id.inter1} that
    \[\Kl_\F\big(\psi; \gamma\beta^2\big)^2 
    = \sum_{z\in \F} (|\F|\cdot \delta_{z,0}- 1)\psi(\beta z) 
    + M_\F(\beta, \gamma) 
    = |\F| + M_\F(\beta, \gamma).\]
    Indeed, the sum $\sum_{z\in \F} \psi(\beta z)$ vanishes because $x\mapsto\psi(\beta x)$ is nontrivial. This conclude the proof. 
    \ProofEnd\end{proof}
    
\subsection{Proof of Theorem \ref{theo.Lfunc}}
\label{subsec.Lfunc.proof}
 
We start by giving a ``concrete'' expression of $L(E_{a, \gamma}, T)$ in terms of the number of rational points on the reductions of $E_{a, \gamma}$ at places of $K$. To that end, we introduce the following notations: for any $n\geq 1$ and any~$\tau\in\P^1(\F_{q^n})$, denote by $v_\tau$ the place of $K$ corresponding to $\tau$ and by $(\widetilde{E_{a, \gamma}})_\tau$ the reduction of a integral minimal model of $E_{a, \gamma}$ at $v_\tau$ (a cubic plane curve over $\F_{{v_\tau}}$, not necessarily smooth). 
We then let
\[ A_{a, \gamma}(\tau, q^n) := q^n+1 - |(\widetilde{E_{a, \gamma}})_\tau(\F_{q^n})|.\]
With these notations, we have:
\begin{lemm}
The $L$-function of $E_{a, \gamma}$ is given by 
\begin{equation}\label{eq.practical.Lfunc}
\log L(E_{a, \gamma}, T) = \sum_{n=1}^{\infty} \left(\sum_{\tau\in\P^1(\F_{q^n})}A_{a, \gamma}(\tau, q^n)\right) \cdot \frac{T^n}{n}.
\end{equation}
\end{lemm}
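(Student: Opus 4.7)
The plan is to derive \eqref{eq.practical.Lfunc} directly from the Euler product definition \eqref{eq.def.Lfunc} by taking logarithms and reorganising the resulting double sum. First I would factor each good local factor as $1-a_v T^{d_v} + q^{d_v} T^{2d_v} = (1-\alpha_v T^{d_v})(1-\beta_v T^{d_v})$, with $\alpha_v+\beta_v = a_v$ and $\alpha_v \beta_v = q^{d_v}$. Then, expanding $-\log(1-X) = \sum_{m\geq 1} X^m/m$ in every Euler factor yields
\begin{equation*}
\log L(E_{a, \gamma}, T)
= \sum_{v\text{ good}} \sum_{m\geq 1} \frac{\alpha_v^m + \beta_v^m}{m}\, T^{d_v m}
+ \sum_{v\text{ bad}} \sum_{m\geq 1} \frac{a_v^m}{m}\, T^{d_v m}.
\end{equation*}

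Next I would reindex the double sum by setting $n = d_v m$ in each summand, so that the coefficient of $T^n/n$ becomes $\sum_{v: d_v\mid n} d_v \cdot c_v(n/d_v)$, where $c_v(k) = \alpha_v^k+\beta_v^k$ for good $v$ and $c_v(k) = a_v^k$ for bad $v$. The key observation that matches this with the inner sum in \eqref{eq.practical.Lfunc} is the well-known orbit decomposition: under the action of $\Gal(\bar{\F_q}/\F_q)$, each $\tau \in \P^1(\F_{q^n})$ lies in the Galois orbit of size $d_{v_\tau}$ corresponding to the closed point $v_\tau$, and conversely every place $v$ with $d_v\mid n$ gives rise to exactly $d_v$ points of $\P^1(\F_{q^n})$. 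Hence
\begin{equation*}
\sum_{\tau \in \P^1(\F_{q^n})} A_{a, \gamma}(\tau, q^n)
= \sum_{v:\, d_v\mid n} d_v \cdot \bigl( q^n+1 - |(\widetilde{E_{a, \gamma}})_v(\F_{q^n})| \bigr).
\end{equation*}

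It then remains to verify, place by place, the identity $q^n+1-|(\widetilde{E_{a,\gamma}})_v(\F_{q^n})| = c_v(n/d_v)$. For a good place this is the standard formula $|\widetilde{E}_v(\F_{q^n})| = q^n + 1 - \alpha_v^{n/d_v} - \beta_v^{n/d_v}$, which reflects the action of Frobenius on the $\ell$-adic Tate module of the reduction. For bad places, I would argue by a direct count on the (singular) Weierstrass cubic: a nodal fiber has $q^{d_v m}$ smooth points over $\F_{v^m}$ when the node is $\F_v$-rational (split multiplicative, $a_v=+1$) and $q^{d_v m}+2$ points when it is not (nonsplit, $a_v=-1$), while a cuspidal fiber always contributes $q^{d_v m}+1$ points (additive, $a_v=0$); in each case the value of $q^n+1-|(\widetilde{E_{a,\gamma}})_v(\F_{q^n})|$ matches $a_v^{n/d_v}$.

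I don't anticipate any real obstacle here: every step is a routine manipulation, and the only point worth verifying with some care is the bad-place count on the singular cubic, where one must remember to include the singular point itself in $|(\widetilde{E_{a,\gamma}})_v(\F_{q^n})|$ so that the combinatorics matches the Euler factor $(1-a_v T^{d_v})^{-1}$ chosen in \eqref{eq.def.Lfunc}.
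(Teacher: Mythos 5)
Your approach matches the paper's: its proof is a one-line instruction to expand the logarithm of the Euler product \eqref{eq.def.Lfunc} as a power series and rearrange, with references to Ulmer and Baig--Hall for details, and you simply carry out exactly that expansion and rearrangement.

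One imprecision in your bad-place tally is worth fixing. For a nonsplit multiplicative place $v$, the singular cubic has $q^n+2$ points over $\F_{q^n}$ only when $m=n/d_v$ is odd; when $m$ is even the node becomes split over $\F_{q^n}$ and the count is $q^n$. The identity $q^n+1-|(\widetilde{E_{a,\gamma}})_v(\F_{q^n})|=(-1)^m=a_v^{\,m}$ holds in both parities, so the conclusion is unaffected, but the uniform count ``$q^{d_vm}+2$'' you state is not correct. Similarly, in the split case ``$q^{d_vm}$ smooth points'' should read ``$q^{d_vm}-1$ smooth points plus the node, $q^{d_vm}$ in total.''
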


    \begin{proof} Starting from the definition \eqref{eq.def.Lfunc} of the $L$-function of $E_{a, \gamma}$,  expanding $\log L(E_{a, \gamma}/K, T)$ as a power series in $T$ and rearranging terms yields the desired  expression of $L(E_{a, \gamma}, T)$. 
    
    See \cite[\S3.2]{Ulmer_legII} or \cite[\S2.2]{BaigHall} for more details. 
    \ProofEnd\end{proof}

The next step is to find a more tractable expression of the inner sums in \eqref{eq.practical.Lfunc}.
For any finite extension $\F_{q^n}$ of $\F_q$, we again let $\lambda :\F_{q^n}^\times\to\{\pm1\}$ 
be the unique nontrivial character of $\F_{q^n}^\times$ of order $2$. 
For any~$\tau\in\P^1(\F_{q^n})$, we choose an affine model $y^2=f_\tau(x)$ of $(\widetilde{E_{a, \gamma}})_\tau$ 
(with $f_\tau(x)\in\F_{v_{\tau}}[x]$ of degree~$3$): 
a standard computation then yields that 
\begin{equation}\label{eq.Atau}
A_{a, \gamma}(\tau, q^n)  
= q^n+1 - |\widetilde{(E_{a, \gamma})}_\tau(\F_{q^n})| 
= q^n - \sum_{x\in\F_{q^n}} \left(1 + \lambda\left( f_\tau(x)\right) \right) 
=- \sum_{x\in\F_{q^n}}  \lambda\left( f_\tau(x)\right).  
\end{equation}
Since $E_{a, \gamma}$ has split multiplicative reduction at $\infty$ (see Proposition \ref{prop.badred}), we have $A_{a, \gamma}(\infty, q^n) = 1$.
Moreover, by Remark \ref{rem.minimod.loc},  we may choose $f_\tau(x) = x(x+16\gamma)(x+\wp_a(\tau)^2)$ for any $\tau\in\F_{q^n}$.
Summing identity 
\eqref{eq.Atau} over all $\tau\in\F_{q^n}$ for this choice of $f_\tau(x)$, we obtain that
\[- \sum_{\tau \in\F_{q^n}}A_{a, \gamma}(\tau, q^n)
=  \sum_{x\in\F_{q^n}} \sum_{\tau\in\F_{q^n}} \lambda\left( x(x+16\gamma)(x+\wp_a(\tau)^2)\right). \]
We know from Lemma \ref{lemm.solcount} 
that, for any $z\in\F_{q^n}$, 
\begin{equation}\notag
\left|\{\tau\in\F_{q^n} : \wp_a(\tau)=z\}\right| 
=\sum_{\beta\in \F_{q^a}\cap \F_{q^n}} \psi_{q^n} 
(\beta \cdot z).
\end{equation}
Therefore, we deduce that
\begin{align*}
- \sum_{\tau \in\F_{q^n}}A_{a, \gamma}(\tau, q^n)
&=  \sum_{x\in\F_{q^n}} \sum_{z\in\F_{q^n}}  \left|\{\tau\in\F_{q^n} : \wp_a(\tau)=z\}\right| \cdot\lambda\left( x(x+16\gamma)(x+z^2)\right) \\
&= \sum_{\beta\in \F_{q^a}\cap \F_{q^n}}\left( \sum_{x\in\F_{q^n}} \sum_{z\in\F_{q^n}} \lambda\left( x(x+16\gamma)(x+z^2)\right)\cdot \psi_{q^n}(\beta \cdot z) \right)
= 
 \sum_{\beta\in \F_{q^a}\cap \F_{q^n}} M_{q^n}(\beta, \gamma),
\end{align*}
where $M_{q^n}(\beta, \gamma)$ is the double character sum that we studied in subsection \ref{subsec.id.charsum} (with $\F=\F_{q^n}$).
Proposition~\ref{prop.id.charsum}, together with our computation of $A_{a, \gamma}(\infty, q^n)$, then leads to
 \begin{equation}\label{eq.Lfunc.inter1}
\sum_{\tau \in\P^1(\F_{q^n})}A_{a, \gamma}(\tau, q^n)
= - \sum_{\beta\in(\F_{q^a}\cap\F_{q^n})\smallsetminus\{0\}} \left(\Kl_{q^n}\big(\psi_{q^n}, \gamma\beta^2\big)^2 - q^n\right). 
 \end{equation}

For each $\beta\in\F_{q^n}^\times$, Proposition \ref{prop.kloo}\eqref{kloo.item4} proves the existence of two algebraic integers $\kkappa_{\F_{q^n}}(\psi_{q^n}; \gamma\beta^2)$ and $\kkappa'_{\F_{q^n}}(\psi_{q^n}; \gamma\beta^2)$ whose product is $|\F_{q^n}|=q^n$ and whose sum is $\Kl_{q^n}(\psi_{q^n}; \gamma\beta^2)$. We thus have 
\begin{equation*}
\Kl_{q^n}\big(\psi_{q^n}, \gamma\beta^2\big)^2 - q^n 
= \left(\kkappa_{q^n}\big(\psi_{q^n}; \gamma\beta^2\big) + \kkappa'_{q^n}\big(\psi_{q^n}; \gamma\beta^2\big)\right)^2 -q^n
= \kkappa_{q^n}\big(\psi_{q^n}; \gamma\beta^2\big)^2 + \kkappa'_{q^n}\big(\psi_{q^n}; \gamma\beta^2\big)^2 + q^n.
\end{equation*}
Hence, for all $n\geq 1$, \eqref{eq.Lfunc.inter1} can be rewritten as  \begin{equation}\label{eq.Lfunc.inter2}
\sum_{\tau \in\P^1(\F_{q^n})}A_{a, \gamma}(\tau, q^n)
= - \sum_{\beta\in(\F_{q^a}\cap\F_{q^n})\smallsetminus\{0\}} \left(\kkappa_{q^n}\big(\psi_{q^n}; \gamma\beta^2\big)^2 + \kkappa'_{q^n}\big(\psi_{q^n}; \gamma\beta^2\big)^2 + q^n\right) 
 \end{equation}

For any $\beta\in\F_{q^a}^\times$, we let $d_\beta := [\F_{q}(\beta):\F_q]$ be the degree of $\beta$ over $\F_q$. Such a $\beta$ also belongs to $\F_{q^n}$ if and only if $d_\beta$ divides $n$ \ie{}, if and only if $\F_{q^n}$ is an extension of $\F_{q^{d_\beta}}$. Therefore, by Proposition \ref{prop.kloo}\eqref{kloo.item4} one has
\[ \Kl_{q^n}(\psi_{q^n}, \gamma\beta^2)
= \Kl_{q^n}\left(\psi_{q^{d_\beta}} \circ\trace_{q^n/q^{d_\beta}};\gamma\beta^2\right)
= \kkappa_{q^{d_\beta}}(\psi_{q^{d_\beta}}; \gamma\beta^2)^{n/d_\beta} + \kkappa'_{q^{d_\beta}}(\psi_{q^{d_\beta}}; \gamma\beta^2)^{n/d_\beta}.\]
For brevity, we temporarily write 
$\oomega_1(\beta) := \kkappa_{q^{d_\beta}}\big(\psi_{q^{d_\beta}}; \gamma\beta^2\big)^{2}$, 
$\oomega_2(\beta) := \kkappa'_{q^{d_\beta}}\big(\psi_{q^{d_\beta}}; \gamma\beta^2\big)^{2}$ and
${\oomega_3(\beta) := q^{d_\beta}}$.  
We deduce from \eqref{eq.Lfunc.inter2} that
\[
\sum_{\tau \in\P^1(\F_{q^n})}A_{a, \gamma}(\tau, q^n)
= - \sum_{\beta\in(\F_{q^a}\cap\F_{q^n})\smallsetminus\{0\}} \left(\oomega_1(\beta)^{n/d_\beta} + \oomega_2(\beta)^{n/d_\beta} +\oomega_3(\beta)^{n/d_\beta}\right). 
\]
Upon multiplying both sides of this identity by $T^n/n$ and summing over all integers $n\geq1 $, we arrive at
\begin{equation*}
- \log L(E_{a, \gamma}, T)
=   \sum_{n\geq 1}\left(\sum_{\beta\in(\F_{q^a}\cap\F_{q^n})\smallsetminus\{0\}} \left(\oomega_1(\beta)^{n/d_\beta} + \oomega_2(\beta)^{n/d_\beta} +\oomega_3(\beta)^{n/d_\beta}\right)\right)\cdot \frac{T^n}{n},
\end{equation*}
and exchanging the order of summation leads to
\begin{align}
- \log L(E_{a, \gamma}, T)
&= \sum_{\beta\in \F_{q^a}^\times}\left(
\sum_{m\geq 1} \left(\oomega_1(\beta)^{m} + \oomega_2(\beta)^{m} +\oomega_3(\beta)^{m}\right)\cdot \frac{T^{m d_\beta}}{m d_\beta}\right)\notag \\ 
&= \sum_{\beta\in \F_{q^a}^\times} - \frac{1}{d_\beta}
 \cdot  \log\left(\prod_{i=1}^3 \left(1-\oomega_i(\beta)\cdot T^{d_\beta}\right) \right)\label{eq.Lfunc.inter3}
 \end{align}

Moreover, as we have explained in \S\ref{subsec.kloos.norm}, for each $\beta\in\F_{q^a}^\times$ with degree $d_\beta$,  the triple $\{\oomega_1(\beta), \oomega_2(\beta), \oomega_3(\beta)\}$
 is constant along the Galois orbit $\{\beta, \beta^q, \dots, \beta^{q^{d_\beta-1}}\}$ \ie{}, the closed point of $\G_m$ corresponding to $\beta$. Consequently, 
for a closed point $v$ of $\G_m$ whose degree divides $a$, we may define $\oomega_i(v)$ to be $\oomega_i(\beta)$ for any choice of $\beta\in v$.
Each term $\log(1-\oomega_i(v)T^{d_v})$ (for $i=1, 2, 3$) appears $d_v=d_\beta$ times in \eqref{eq.Lfunc.inter3} and we may thus rewrite the sum over $\beta\in\F_{q^a}^\times$ there as a sum over closed points of $\G_m$ whose degrees divide $a$:
\[ \log L(E_{a, \gamma}, T)
=  \sum_{ v\in P_q(a)}  \log\left(\prod_{i=1}^3 \left(1-\oomega_i(v)\cdot T^{d_v}\right) \right) 
\]
Exponentiating this identity and replacing the $\oomega_i(v)$'s by their value concludes the calculation of the {$L$-function} of $E_{a, \gamma}$ over $K$. 
\ProofEnd 
 
 \subsection{Rank and special value}
 
 From the factored expression of $L(E_{a, \gamma}, T)$ obtained in Theorem \ref{theo.Lfunc}, one can deduce explicit expressions of the analytic rank $\rho(E_{a, \gamma})$ and of the special value $L^\ast(E_{a, \gamma}, 1)$ (as defined in \S\ref{subsec.Lfunc.defi}). 

 \begin{prop}\label{prop.anal.rk} 
 For any $\gamma\in\F_q^\times$ and any integer $a\geq 1$, we have
 \begin{align}
\rho(E_{a, \gamma}) &
 = |P_q(a)| \label{eq.anal.rk} \\
\text{and }\quad  L^\ast(E_{a, \gamma}, 1) 
&= \prod_{v\in P_q(a)}4\deg v \cdot \prod_{v\in P_q(a)} \left(1 -  \frac{\Kln_{\gamma}(v)^2}{4q^{\deg v}}\right).\label{eq.expr.spval} 
\end{align}
 \end{prop}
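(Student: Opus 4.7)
The plan is to read both quantities off directly from the factored expression of $L(E_{a,\gamma}, T)$ provided by Theorem \ref{theo.Lfunc}. Write $L(E_{a, \gamma}, T) = \prod_{v\in P_q(a)} F_v(T) \cdot G_v(T) \cdot G'_v(T)$, where $F_v(T) = 1-q^{d_v}T^{d_v}$, $G_v(T) = 1-\kkappan_\gamma(v)^2 T^{d_v}$ and $G'_v(T) = 1-\kkappan'_\gamma(v)^2 T^{d_v}$.

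For \eqref{eq.anal.rk}, first I would analyse the order of vanishing of each factor at $T=q^{-1}$. For every $v\in P_q(a)$, the factorisation $F_v(T) = (1-qT) \cdot \bigl(1+qT + \cdots + (qT)^{d_v-1}\bigr)$ shows that $F_v$ has a simple zero at $T=q^{-1}$, with cofactor value $d_v$. On the other hand, Proposition \ref{prop.kloo}\eqref{kloo.item7} gives the strict inequality $|\Kln_\gamma(v)| < 2 q^{d_v/2}$; combined with $|\kkappan_\gamma(v)|=|\kkappan'_\gamma(v)|=q^{d_v/2}$ and $\kkappan_\gamma(v)\kkappan'_\gamma(v)=q^{d_v}$ (Proposition \ref{prop.kloo}\eqref{kloo.item4}--\eqref{kloo.item5}), this forces $\kkappan_\gamma(v)$ and $\kkappan'_\gamma(v)$ to be complex conjugate and nonreal, so neither $\kkappan_\gamma(v)^2$ nor $\kkappan'_\gamma(v)^2$ equals $q^{d_v}$. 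Hence $G_v(q^{-1}) \neq 0$ and $G'_v(q^{-1}) \neq 0$. Summing the orders of vanishing over $v\in P_q(a)$ then yields $\rho(E_{a, \gamma}) = |P_q(a)|$.

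For \eqref{eq.expr.spval}, the same factorisation gives, using Definition \ref{defi.spval},
\[
L^*(E_{a, \gamma}, 1) = \prod_{v\in P_q(a)} \Bigl(\lim_{T\to q^{-1}} \tfrac{F_v(T)}{1-qT}\Bigr) \cdot \prod_{v\in P_q(a)} G_v(q^{-1})\,G'_v(q^{-1}) = \prod_{v\in P_q(a)} d_v \cdot \prod_{v\in P_q(a)} G_v(q^{-1})\,G'_v(q^{-1}).
\]
It remains to simplify $G_v(q^{-1})\,G'_v(q^{-1}) = (1-\kkappan_\gamma(v)^2/q^{d_v})(1-\kkappan'_\gamma(v)^2/q^{d_v})$. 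Expanding and using the two identities $\kkappan_\gamma(v) \kkappan'_\gamma(v) = q^{d_v}$ and $\kkappan_\gamma(v)^2 + \kkappan'_\gamma(v)^2 = \Kln_\gamma(v)^2 - 2q^{d_v}$ (the latter from squaring $\kkappan_\gamma(v)+\kkappan'_\gamma(v) = \Kln_\gamma(v)$), a short computation produces $G_v(q^{-1})\,G'_v(q^{-1}) = 4 - \Kln_\gamma(v)^2/q^{d_v} = 4\bigl(1 - \Kln_\gamma(v)^2/(4q^{d_v})\bigr)$. Inserting this into the previous display yields \eqref{eq.expr.spval}.

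There is no real obstacle here: both statements are essentially bookkeeping once Theorem \ref{theo.Lfunc} is in hand. The only point requiring some attention is the use of the strict Weil bound from Proposition \ref{prop.kloo}\eqref{kloo.item7} to exclude additional zeros at the central point, which also explains \emph{a posteriori} why the factor $(1-\Kln_\gamma(v)^2/(4q^{d_v}))$ in \eqref{eq.expr.spval} is nonzero, consistent with $L^*(E_{a, \gamma}, 1) \neq 0$.
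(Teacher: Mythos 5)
Your proof is correct and follows essentially the same route as the paper's: both use the factored expression of $L(E_{a,\gamma},T)$ from Theorem \ref{theo.Lfunc}, observe that each local factor $1-q^{d_v}T^{d_v}$ contributes a simple zero at $T=q^{-1}$ with cofactor $d_v$, invoke Proposition \ref{prop.kloo}(vii) (the strict Weil bound) to rule out vanishing of the other two subfactors, and then finish by the same elementary algebra with $\kkappan_\gamma(v)\kkappan'_\gamma(v)=q^{d_v}$ and $\kkappan_\gamma(v)+\kkappan'_\gamma(v)=\Kln_\gamma(v)$. The only cosmetic difference is that you spell out the intermediate step that $\kkappan_\gamma(v),\kkappan'_\gamma(v)$ are nonreal complex conjugates, which the paper leaves implicit.
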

  
    \begin{proof} 
    For each $v\in P_q(a)$, let 
    \[ g_v(T) := \left(1-q^{d_v} \cdot T^{d_v}\right) \left(1-\kkappan_{\gamma}(v)^2 \cdot T^{d_v}\right)\left(1- \kkappan'_{\gamma}(v)^2 \cdot T^{d_v}\right)\] 
    be the corresponding factor of $L(E_{a, \gamma}, T)$ (as in Theorem \ref{theo.Lfunc}).
    The factor $1-q^{d_v}\cdot T^{d_v}$ has a simple zero at $T=q^{-1}$  and neither of the other two factors of $g_v(T)$ vanishes at $T=q^{-1}$. Indeed, neither of $\kkappan_{\gamma}(v)^2$ and $\kkappan'_{\gamma}(v)^2$ can equal $q^{d_v}$ 
    since $|\Kln_{\gamma}(v)| = |\kkappan_{\gamma}(v)+\kkappan'_{\gamma}(v)|$ is strictly smaller than $2 q^{d_v/2}$ by Proposition \ref{prop.kloo}\eqref{kloo.item7}.
    Hence, $g_v(T)$ has a simple zero at $T=q^{-1}$ and we have 
    \[\rho(E_{a, \gamma}) = \ord_{T=q^{-1}}L(E_{a, \gamma}, T) 
    = \sum_{v\in P_q(a)} \ord_{T=q^{-1}}g_v(T) 
    = |P_q(a)|,\]
    which proves \eqref{eq.anal.rk}. 
    Next, by definition (\cf{} \eqref{eq.spval.def}) of the special value $L^\ast(E_{a, \gamma}, 1)$, we have
    \[ L^\ast(E_{a, \gamma}, 1)
    =  \left.\frac{L(E_{a, \gamma}, T)}{(1-qT)^\rho}\right|_{T=q^{-1}}
    = \prod_{v\in P_q(a)}  \left(\!\left.\frac{g_v(T)}{1-qT}\right|_{T=q^{-1}}\right).    \]
    Furthermore, by a straightforward computation, we obtain that
    \[\left.\frac{g_v(T)}{1-qT}\right|_{T=q^{-1}}
    = d_v \cdot \left( 1 - \frac{\kkappan_{\gamma}(v)^2}{q^{d_v}} \right)
    \left( 1 - \frac{\kkappan'_{\gamma}(v)^2}{q^{d_v}} \right)
    = d_v \cdot \frac{4 q^{d_v} - \Kln_{\gamma}(v)^2}{q^{d_v}}.
    \]
    Combining the last two displayed identities directly leads to the desired expression of $L^\ast(E_{a, \gamma}, 1)$. 
    \ProofEnd \end{proof}

Since the curve $E_{a, \gamma}$ satisfies the BSD conjecture (see Theorem \ref{theo.BSD}), Proposition \ref{prop.anal.rk} implies that $\rk E_{a, \gamma}(K)= |P_q(a)|$. 
It is worthwhile to note that the Mordell-Weil rank of $E_{a, \gamma}(K)$ is actually independent of $\gamma\in\F_q^\times$ (this is evident from \eqref{eq.anal.rk}).
 By Lemma \ref{lemm.estimates}\eqref{estim.item2}, we therefore have
\begin{equation}\label{eq.unbnd.rk}
\frac{q^a}{a} 
\ll_q \rk E_{a, \gamma}(K) \ll_q
\frac{q^a}{a}. 
\end{equation}
 In particular, we retrieve the following ``unbounded rank'' result (see \cite[Coro. 2.7.3]{UlmerPries}). 
\begin{coro}\label{coro.unbnd.rk}
Let $\F_q$ be a finite field of odd characteristic $p$ and $K=\F_q(t)$.
For all $\gamma\in\F_q^\times$,  
the rank of $E_{a, \gamma}(K)$ is unbounded as $a\geq 1$ tends to infinity.
\end{coro}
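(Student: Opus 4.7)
The plan is simply to combine three results already established in the paper, since the corollary is essentially a direct consequence of them. First, by Theorem~\ref{theo.BSD} (the Pries--Ulmer result establishing the full BSD conjecture for $E_{a,\gamma}$), the Mordell--Weil rank $\rk E_{a,\gamma}(K)$ agrees with the analytic rank $\rho(E_{a,\gamma}) = \ord_{T=q^{-1}}L(E_{a,\gamma},T)$. Second, Proposition~\ref{prop.anal.rk} identifies this analytic rank with $|P_q(a)|$, the number of closed points of $\G_m/\F_q$ of degree dividing $a$. Therefore, for every $\gamma\in\F_q^\times$ and every $a\geq 1$, one has the exact equality $\rk E_{a,\gamma}(K) = |P_q(a)|$.

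It then remains to show that $|P_q(a)|\to\infty$ as $a\to\infty$. This is precisely the content of Lemma~\ref{lemm.estimates}\eqref{estim.item2}, which gives $|P_q(a)|\gg_q q^a/a$. Substituting into the previous identity yields $\rk E_{a,\gamma}(K) \gg_q q^a/a$, which tends to infinity with $a$; this is the quantitative statement already recorded as \eqref{eq.unbnd.rk}, and in particular it proves the unboundedness asserted in the corollary.

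There is no real obstacle here: the corollary is a genuine assemblage of results proved earlier in the paper (notably the explicit factorisation of $L(E_{a,\gamma},T)$ via Kloosterman parameters in Theorem~\ref{theo.Lfunc}, which feeds into Proposition~\ref{prop.anal.rk}). The only thing worth emphasising in the write-up is that the rank is independent of $\gamma\in\F_q^\times$, a feature already visible from \eqref{eq.anal.rk}, so the statement holds uniformly over $\gamma$.
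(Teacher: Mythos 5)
Your proposal is correct and follows exactly the paper's own route: combine Theorem~\ref{theo.BSD} (BSD, hence $\rk E_{a,\gamma}(K)=\rho(E_{a,\gamma})$), Proposition~\ref{prop.anal.rk} (so $\rho(E_{a,\gamma})=|P_q(a)|$), and Lemma~\ref{lemm.estimates}\eqref{estim.item2} to get $\rk E_{a,\gamma}(K)\gg_q q^a/a\to\infty$. Nothing to add.
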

One might further compare \eqref{eq.unbnd.rk} to the upper bound of 
\cite[Prop. 6.9]{Brumer}, which states that 
\[\rk E_{a, \gamma}(K) \ll_q \frac{\deg\cond(E_{a, \gamma})}{\log \deg\cond(E_{a, \gamma})}.\]
From \S\ref{subsec.invariants}, we know that $q^a\ll \deg \cond(E_{a, \gamma})\ll q^a$; therefore the lower bound in \eqref{eq.unbnd.rk} shows that $\rk E_{a, \gamma}(K)$ attains Brumer's upper bound 
(up to constants depending on $q$).

\subsection[Angles of Kloosterman sums]{Angles of the sums $\Kln_\gamma(v)$} 
\label{subsec.anglesnorm}

For any $v\in P_q(a)$, 
we know that $\Kln_{\gamma}(v)$ is a 
real algebraic integer with $|\Kln_{\gamma}(v)|\leq 2q^{d_v/2}$ in any complex embedding (see items \eqref{kloo.item1}, \eqref{kloo.item4}, \eqref{kloo.item5} in Proposition \ref{prop.kloo}). 
Thus, there exists a unique angle $\angn_{\gamma}(v)\in [0, \pi]$ such that
\begin{equation}\label{eq.defi.angle}
\Kln_\gamma(v) = 2q^{d_v/2}\cdot\cos \angn_\gamma(v).
\end{equation}
Note that the individual angles $\angn_\gamma(v)$ depend on a choice of complex embedding $\Q(\zeta_p)\into\C$, 
but that the set $\{\angn_\gamma(v)\}_{v\in P_q(a)}$ does not. 

We can then rewrite the expression of $L^\ast(E_{a, \gamma}, 1)$ obtained in Proposition \ref{prop.anal.rk} in terms of these angles:
\begin{equation}\label{eq.expr.spval.alt} 
L^\ast(E_{a, \gamma}, 1) 
= \prod_{v\in P_q(a)}4\deg v \cdot \prod_{v\in P_q(a)}  \sin^2\big(\angn_\gamma(v)\big).
\end{equation}
In section \ref{sec.bnd.spval}, we will prove upper and lower bounds on the size of $L^\ast(E_{a, \gamma}, 1)$ in terms of the degree $b(E_{a, \gamma})$ of the $L$-function. 
From the above expression, it is obvious that this size crucially depends on how the angles $\left\{\angn_{\gamma}(v)\right\}_{v\in P_q(a)}$ are distributed in $[0, \pi]$ when $a\to\infty$. 
Therefore, we spend the next two sections describing this distribution in some detail.

\section{Small angles of Kloosterman sums}
\label{sec.small.angles}
 
 In this section, we work in the following setting.
 Let $\F$ be a finite field of odd characteristic $p$, and  $\psi$ be a nontrivial additive character on $\F$. 
 We assume that $\psi$ takes values in $\Q(\zeta_p)$ and we pick a complex embedding $\Q(\zeta_p)\into\C$. 
 
For any $\alpha\in \F^\times$ the Kloosterman sum $\Kl_\F(\psi;\alpha)$ is a real algebraic integer with $|\Kl_\F(\psi;\alpha)|\leq 2|\F|^{1/2}$ (see Proposition \ref{prop.kloo}\eqref{kloo.item1}, \eqref{kloo.item4} and \eqref{kloo.item5}). 
Thus there is a well-defined angle $\theta_\F(\psi,\alpha)\in[0,\pi]$ associated to the Kloosterman sum by 
\[\Kl_\F(\psi;\alpha) := 2|\F|^{1/2}\cdot \cos \theta_\F(\psi;\alpha).\]
Further, as noted in Proposition \ref{prop.kloo}\eqref{kloo.item7}, the angle $\theta_\F(\psi;\alpha)$  cannot be $0$ or $\pi$ since $\Kl_\F(\psi;\alpha)$  ``never attains the Weil bound''. 
In this section, we investigate how close  $\theta_\F(\psi;\alpha)$ can be to $0$ and $\pi$; we prove the following result, which may be of independent interest.

\begin{theo}\label{theo.minangle} 
There exists an effectively computable constant $c_p>0$ (depending at most on $p$) such that the following holds:
for any finite field $\F$ of characteristic $p$,  any nontrivial additive character $\psi$ on $\F$ and any $\alpha\in\F^\times$, one has
\begin{equation}\notag
 |\theta_\F(\psi;\alpha)| > |\F|^{-c_p}
\qquad \text{ and } \qquad
|\pi-\theta_\F(\psi;\alpha)| > |\F|^{-c_p}.
\end{equation}
Moreover $c_p=2(p-1)$ is a suitable value of the constant.
\end{theo}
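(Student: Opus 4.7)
The plan is to reformulate the claim as a diophantine approximation statement to which Liouville's inequality (in its simplest form: the absolute norm of a nonzero algebraic integer is a positive rational integer) applies directly.

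Write $\Kl := \Kl_\F(\psi;\alpha)$ and $\theta := \theta_\F(\psi;\alpha) \in (0,\pi)$ for brevity. From the defining identity $\Kl = 2|\F|^{1/2}\cos\theta$ I would first record the key rewriting
\[4|\F| - \Kl^2 \;=\; 4|\F|\sin^2\theta.\]
By Proposition~\ref{prop.kloo}\eqref{kloo.item1}, $\Kl$ lies in $\Z[\zeta_p+\zeta_p^{-1}]$, so $\beta := 4|\F|-\Kl^2$ is an algebraic integer in the totally real number field $K_p := \Q(\zeta_p+\zeta_p^{-1})$, which has degree $d_p := (p-1)/2$ over $\Q$. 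Moreover, Proposition~\ref{prop.kloo}\eqref{kloo.item7} asserts that every complex embedding $\sigma$ of $\Q(\zeta_p)$, and a fortiori of $K_p$, satisfies $|\sigma(\Kl)| < 2|\F|^{1/2}$; hence $0 < \sigma(\beta) \leq 4|\F|$ in each embedding, and $\beta$ is in particular a nonzero (even totally positive) algebraic integer.

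Next I would apply Liouville's inequality: since $\beta \in K_p$ is a nonzero algebraic integer, its norm satisfies $|N_{K_p/\Q}(\beta)| \geq 1$. Expanding this norm as the product of the $\sigma(\beta)$ over the $d_p$ complex embeddings of $K_p$, singling out the factor corresponding to our fixed embedding, and using the uniform bound $\sigma(\beta) \leq 4|\F|$ for the remaining $d_p - 1$ conjugates, I obtain
\[1 \;\leq\; \prod_\sigma \sigma(\beta) \;\leq\; \bigl(4|\F|\sin^2\theta\bigr)\cdot (4|\F|)^{d_p - 1} \;=\; (4|\F|)^{(p-1)/2}\sin^2\theta,\]
whence $\sin^2\theta \geq (4|\F|)^{-(p-1)/2}$.

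Finally, I would convert this sine estimate into the required bound on $\min(\theta,\pi-\theta)$. The elementary inequality $\sin t \leq \min(t,\pi-t)$ valid on $(0,\pi)$ gives $\min(\theta,\pi-\theta) \geq \sin\theta \geq (4|\F|)^{-(p-1)/4}$. Since $|\F| \geq p \geq 3$ implies $|\F|^7 \geq 3^7 > 4$, we have $4^{(p-1)/4} \leq |\F|^{7(p-1)/4}$, and therefore $(4|\F|)^{(p-1)/4} \leq |\F|^{2(p-1)}$, yielding $\min(\theta,\pi-\theta) \geq |\F|^{-2(p-1)}$ as required, with $c_p = 2(p-1)$. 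There is no genuine obstruction in this plan; the one conceptual choice is to feed the algebraic integer $\beta = 4|\F|-\Kl^2 \in K_p$ into Liouville rather than the more naive $2|\F|^{1/2}\mp\Kl$, which would force the larger field $\Q(\zeta_p, |\F|^{1/2})$ into the picture and complicate the bookkeeping. The stated constant $c_p = 2(p-1)$ is very loose; the same argument in fact yields a constant of order $(p-1)/4$.
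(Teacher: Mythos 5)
Your proof is correct, and it takes a genuinely different (and in fact sharper) route than the paper's. The paper applies the Mignotte--Waldschmidt form of Liouville's inequality to the algebraic number $z=\kkappa_\F(\psi;\alpha)/|\F|^{1/2}=\e^{i\theta}$ and the linear polynomials $X\pm 1$; since $z$ is not an algebraic integer, one has to track its logarithmic Weil height ($h(z)\leq\tfrac12\log|\F|$) and its degree, which the paper bounds by $2(p-1)$ via the field $\Q\bigl(|\F|^{1/2},\kkappa_\F(\psi;\alpha)\bigr)$; separate applications with $X-1$ and $X+1$ then handle $\theta\to0$ and $\theta\to\pi$. You instead feed the single totally positive algebraic integer $\beta=4|\F|-\Kl_\F(\psi;\alpha)^2=4|\F|\sin^2\theta$, which lives in the totally real field $\Q(\zeta_p+\zeta_p^{-1})$ of degree $(p-1)/2$, into the most elementary form of Liouville's inequality $|N(\beta)|\geq 1$; together with the trivial conjugate bound $0<\sigma(\beta)\leq 4|\F|$ this yields $\sin^2\theta\geq(4|\F|)^{-(p-1)/2}$, and the symmetry of $\sin^2$ treats both endpoints $0$ and $\pi$ at once. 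Your choice of $\beta$ avoids heights, avoids the degree inflation from adjoining $|\F|^{1/2}$ and $\kkappa_\F(\psi;\alpha)$, and produces the stronger exponent $(p-1)/4$ (up to the bounded factor $4^{(p-1)/4}$); the reduction to $c_p=2(p-1)$ via $|\F|\geq p\geq 3$ is just a loosening to match the stated constant. The one small point to watch is the direction of the sine comparison: you correctly use $\sin t\leq\min(t,\pi-t)$ on $(0,\pi)$, which is what you need to convert a lower bound on $\sin\theta$ into one on $\min(\theta,\pi-\theta)$.
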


Before we start the proof, we recall for convenience the following version of  Liouville's inequality:
\begin{theo}[Liouville's inequality]\label{theo.liouville}
Let $P\in\Z[X]$ be a polynomial of degree $N$.
For any algebraic number $z\in\Qbar$, let $D_z$ be its degree over $\Q$ and $h(z)$ denote its logarithmic absolute Weil height.

Either $P(z)= 0$ or
\begin{equation}\label{eq.liouville}
|P(z)| \geq \|P\|_1^{-D_z+1} \cdot \exp\left(-N\cdot D_z \cdot h(z)\right),
\end{equation}
in any complex embedding of $\Q(z)$, where $\|P\|_1$ is the sum of the absolute values of the coefficients of $P$.
\end{theo}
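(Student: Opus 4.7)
The plan is to deduce the inequality from a global product-over-conjugates estimate obtained via the resultant, combined with the identification of the logarithmic Weil height with the (normalized) Mahler measure of the minimal polynomial of $z$. First I would let $M_z\in\Z[X]$ be the primitive polynomial with positive leading coefficient $a_0\geq 1$ which is a $\Q$-multiple of the minimal polynomial of $z$; write $M_z(X)=a_0\prod_{i=1}^{D_z}(X-z_i)$ with $z=z_1,z_2,\ldots,z_{D_z}\in\Qbar$ the Galois conjugates of $z$. The Mahler-measure identity that will feed in at the end reads
\[\prod_{i=1}^{D_z}\max\bigl(1,|z_i|\bigr) \;=\; a_0^{-1}\cdot\exp\bigl(D_z\cdot h(z)\bigr),\]
and is precisely the link between $h(z)$ and the absolute values of the conjugates.

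The crux of the argument is the resultant $R:=\mathrm{Res}(M_z,P)$, which the ``from-roots'' formula expresses as $R=a_0^{N}\cdot\prod_{i=1}^{D_z}P(z_i)$. Since $M_z,P\in\Z[X]$, one has $R\in\Z$; and since $M_z$ is (up to sign) the minimal polynomial of $z$ over $\Q$, the hypothesis $P(z)\neq 0$ forces $M_z\nmid P$ in $\Q[X]$, so none of the $P(z_i)$ vanishes and $R$ is a nonzero integer. Consequently $|R|\geq 1$, yielding the global lower bound
\[\prod_{i=1}^{D_z}|P(z_i)|\;\geq\; a_0^{-N}.\]

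To pass from this product-bound to an individual bound at the given complex embedding, I would isolate $|P(z)|$ and estimate each of the remaining $D_z-1$ factors trivially via the triangle inequality, $|P(z_i)|\leq \|P\|_1\cdot \max(1,|z_i|)^{N}$ (this uses only that $P$ has degree $N$ and integer coefficients). This leads to
\[|P(z)|\;\geq\; \frac{a_0^{-N}}{\|P\|_1^{D_z-1}\cdot\prod_{i\neq 1}\max(1,|z_i|)^{N}},\]
after which bounding $\prod_{i\neq 1}\max(1,|z_i|)$ from above by the full product $\prod_{i=1}^{D_z}\max(1,|z_i|)=a_0^{-1}\exp(D_z\, h(z))$ causes all powers of $a_0$ to cancel and produces the announced inequality \eqref{eq.liouville}. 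There is no substantive obstacle: the two points that need attention are insisting that $M_z$ be \emph{primitive} in $\Z[X]$ (so that $R\in\Z$ and the factor $a_0^{-N}$ matches up correctly on both sides) and quoting the standard identification of the Weil height with the Mahler measure, which is a classical result that one invokes rather than reproves.
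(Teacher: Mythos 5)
Your proof is correct and complete. Note first that the paper does not actually reprove this statement: it simply refers to Mignotte--Waldschmidt (\cite{MiWa}, introduction and Lemma~5) for both the formulation and the proof. Your argument---write the primitive minimal polynomial $M_z = a_0\prod(X - z_i)$, use that $\mathrm{Res}(M_z,P) = a_0^N\prod P(z_i)$ is a nonzero integer when $P(z)\neq 0$, isolate $|P(z)|$ and bound the other factors by $\|P\|_1 \max(1,|z_i|)^N$, then feed in $\prod\max(1,|z_i|) = a_0^{-1}\exp(D_z h(z))$---is exactly the classical resultant-plus-Mahler-measure proof of Liouville's inequality, and the powers of $a_0$ indeed cancel as you claim. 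Two small remarks. First, the nonvanishing of $R$ deserves one more word: if $P(z_i)=0$ for some conjugate $z_i$, then the irreducible $M_z$ would divide $P$ in $\Q[X]$ and so $P(z)=0$; you gesture at this but should say it, since the statement only assumes $P(z)\neq 0$ at the chosen embedding. Second, the step $\prod_{i\neq 1}\max(1,|z_i|)\leq\prod_{i=1}^{D_z}\max(1,|z_i|)$ uses $\max(1,|z_1|)\geq 1$, which is where you silently discard the contribution of $z$ itself; it is worth making explicit since one could worry about losing more than is affordable, but in fact dropping a factor $\geq 1$ only strengthens the final lower bound. With those clarifications spelled out, the argument is a self-contained proof of the statement which the paper itself only cites.
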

See the introduction of \cite{MiWa} and the proof of Lemma 5 in \emph{loc.cit.} for this version and its proof.

    \begin{proof}[of Theorem \ref{theo.minangle}]
    We let $\F, \psi$ and $\alpha$ be as in the statement of Theorem \ref{theo.minangle}; we choose an embedding $\Qbar\into\C$ which is compatible with our choice of $\Q(\zeta_p)\into\C$.  By Proposition \ref{prop.kloo}\eqref{kloo.item4}, we can write
    \[\Kl_\F(\psi;\alpha) = \kkappa_{\F}(\psi; \alpha) + \kkappa'_{\F}(\psi;\alpha),\]
    for two algebraic integers $\kkappa_{\F}(\psi; \alpha)$, $\kkappa'_{\F}(\psi;\alpha)$ of magnitude $|\F|^{1/2}$ whose product is $|\F|$. 
    In the given complex embedding, one of $\kkappa_{\F}(\psi; \alpha)$ and $\kkappa'_{\F}(\psi;\alpha)$ equals $|\F|^{1/2} \cdot \e^{i\ang_\F(\psi;\alpha)}$ by Proposition \ref{prop.kloo}\eqref{kloo.item5}. Without loss of generality, we assume that $\kkappa_\F(\psi;\alpha)=|\F|^{1/2} \cdot \e^{i\ang_\F(\psi;\alpha)}$.
    
    We let $z\in\bar{\Q}$ be 
    the ratio ${z:=\kkappa_\F(\psi;\alpha)/|\F|^{1/2}} =\e^{i\theta_\F(\psi;\alpha)}$
    and $L:=\Q(z)\subset\Qbar$. 
    We have 
    \begin{lemm}\label{lemm.minangle} 
    $z$ has degree $D_z\leq 2(p-1)$ and height $h(z)\leq \log \sqrt{|\F|}$. Moreover, $z\neq\pm1$.
    \end{lemm}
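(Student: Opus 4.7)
The three assertions will be handled independently, in roughly increasing order of difficulty.

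The inequality $z\neq\pm 1$ is immediate from the strict bound $|\Kl_\F(\psi;\alpha)|<2|\F|^{1/2}$ of Proposition~\ref{prop.kloo}\eqref{kloo.item7}: this forces $\theta_\F(\psi;\alpha)\in(0,\pi)$, so $z=\e^{i\theta_\F(\psi;\alpha)}$ is not real.

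For the degree bound, I would start from the observation that $\kkappa_\F(\psi;\alpha)$ is a root of the quadratic $X^2-\Kl_\F(\psi;\alpha)X+|\F|$, whose coefficients lie in $\Z[\zeta_p+\zeta_p^{-1}]$ by Proposition~\ref{prop.kloo}\eqref{kloo.item1} and \eqref{kloo.item4}. Consequently $[\Q(\kkappa_\F(\psi;\alpha)):\Q]\leq 2\cdot[\Q(\zeta_p+\zeta_p^{-1}):\Q]=p-1$. Since $[\Q(|\F|^{1/2}):\Q]\leq 2$ (with equality precisely when $|\F|$ is an odd power of $p$), the inclusion $z\in\Q(\kkappa_\F(\psi;\alpha),|\F|^{1/2})$ yields $D_z\leq 2(p-1)$.

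For the height bound, I plan to evaluate $h(z)=\frac{1}{[L:\Q]}\sum_{v}[L_v:\Q_v]\log^+|z|_v$ place by place, where $L$ is any number field containing $z$. At archimedean $v$, Proposition~\ref{prop.kloo}\eqref{kloo.item5} places both roots of the above quadratic on the circle of radius $|\F|^{1/2}$ in every complex embedding, so $|z|_v=1$ and the contribution vanishes. At a finite place $v\nmid p$, the algebraic integer $\kkappa_\F(\psi;\alpha)$ satisfies $|\kkappa_\F(\psi;\alpha)|_v\leq 1$ while $|\F|^{1/2}$ is a $v$-adic unit, so $|z|_v\leq 1$ and $\log^+|z|_v=0$ as well.

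The only nontrivial contribution, and the step I expect to be the main obstacle, is at places $v\mid p$. Here the key input is the identity $\kkappa_\F(\psi;\alpha)\kkappa'_\F(\psi;\alpha)=|\F|$ from Proposition~\ref{prop.kloo}\eqref{kloo.item4}: writing $|\F|=p^m$ and letting $w$ denote the normalized valuation on $L_v$ extending $v_p$, the two non-negative values $w(\kkappa_\F(\psi;\alpha))$ and $w(\kkappa'_\F(\psi;\alpha))$ sum to $m$, forcing $0\leq w(\kkappa_\F(\psi;\alpha))\leq m$. Therefore $w(z)\geq -m/2$ and $\log^+|z|_v\leq(m/2)\log p$; summing with the weights $[L_v:\Q_v]$ over places above $p$ collapses to $(m/2)\log p=\log\sqrt{|\F|}$. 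The only real care required lies in keeping the various $p$-adic normalizations consistent across the extensions $L_v/\Q_p$, which is routine.
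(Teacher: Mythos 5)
Your proposal is correct and follows essentially the same route as the paper: express $\kkappa_\F(\psi;\alpha)$ as a root of $X^2-\Kl_\F(\psi;\alpha)X+|\F|$ over $\Q(\zeta_p+\zeta_p^{-1})$ to get the degree bound after adjoining $|\F|^{1/2}$; estimate $h(z)$ place by place, with the archimedean and non-$p$ finite contributions vanishing and the $p$-adic contribution controlled by the relation $\kkappa_\F(\psi;\alpha)\kkappa'_\F(\psi;\alpha)=|\F|$ between two algebraic integers; and read $z\neq\pm1$ off the strict Weil bound. Your write-up of the $p$-adic step and of the vanishing of the non-$p$ finite contributions is in fact slightly more explicit and cleaner than the paper's phrasing, but the underlying argument is identical.
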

        \begin{proof} 
        By Proposition \ref{prop.kloo}\eqref{kloo.item1}, we have $\Kl_\F(\psi; \alpha)\in\Q(\zeta_p+\zeta_p^{-1})$. 
        Further,  Proposition \ref{prop.kloo}\eqref{kloo.item4} implies that 
        $\kkappa_{\F}(\psi; \alpha)$ and $\kkappa'_{\F}(\psi;\alpha)$ are the roots of $X^2 - \Kl_\F(\psi;\alpha) \cdot X +|\F|$; it is then clear that the degree of $\kkappa_{\F}(\psi; \alpha)$ over $\Q$ is $\leq 2 [\Q(\zeta_p+\zeta_p^{-1}):\Q] = p-1$.
        Thus we have
        \[D_z = [L:\Q] \leq [\Q\big(|\F|^{1/2}, \kkappa_{\F}(\psi; \alpha)\big):\Q] \leq 2[\Q(\zeta_p+\zeta_p^{-1}):\Q]\leq 2(p-1), \]
        as was to be shown. 
        Since $|\kkappa_\F(\psi; \alpha)|=|\F|^{1/2}$, 
        we infer that $|z|=1$ in any complex embedding of $L$,  therefore the archimedean places of $L$ do not contribute to $h(z)$.
        We deduce further that $z$ is a unit at all finite places of $L$ which are not above $p$; hence these places do not contribute to $h(z)$ either. 
        It remains to consider finite places $\mathfrak{P}$ of $L$ lying above $p$:
        since both $\kkappa_\F(\psi;\alpha)$ and $\kkappa'_\F(\psi;\alpha) = |\F|/\kkappa_\F(\psi;\alpha)$ are algebraic integers, the contributions of $\mathfrak{P}$ to $h(z)$ is 
        $\leq \frac{1}{2}\cdot\frac{\ord_{\mathfrak{P}}(|\F|)}{[L:\Q]}$. 
        By summing these, we conclude that $h(z)\leq \frac{1}{2}\log |\F|$. 
        
        Finally, Proposition \ref{prop.kloo}\eqref{kloo.item7} shows that $\theta_{\F}(\psi;\alpha)\notin\{ 0, \pi\}$, and 
        the last assertion easily follows.
        %
        \ProofEnd\end{proof}
    
    Upon applying Liouville's inequality \eqref{eq.liouville} to $z$ with $P=X\pm1$ and using Lemma \ref{lemm.minangle}, we obtain that
    \begin{align}
    \log|z \pm 1|& 
    \geq (-D_z+1)\cdot\log 2  - D_z \cdot h(z)
    \geq -D_z\cdot \left(\log 2  +\log \sqrt{|\F|}\right) \notag\\
    &> - D_z \cdot \log |\F|\geq -2(p-1)\cdot \log |\F|. \label{eq.minangle.interm}
    \end{align}
    Noting that the the graph of $t\mapsto |\sin t|$ lies below its tangents at $t=\pi/2$, one sees that $|\e^{i t} -1| \leq |t|$ for all $t\in[0,\pi]$.
    From the lower bound \eqref{eq.minangle.interm} on $|z-1|$, we deduce from this inequality that 
    \[ |\theta_\F(\psi;\alpha)| \geq |\e^{i\theta_\F(\psi;\alpha)}-1| =|z-1|> |\F|^{-c_p},\]
    where $c_p = 2(p-1)$. To get the lower bound on $|\theta_\F(\psi;\alpha) -\pi|$, we use the same inequality with $t$ replaced by $t'=\pi - t$ and the lower bound on $|z+1|$ in \eqref{eq.minangle.interm}.  This concludes the proof of Theorem \ref{theo.minangle}.
    \ProofEnd
    \end{proof}
    

Let us deduce two corollaries from Theorem \ref{theo.minangle}. The first one can be viewed as a slight improvement on the Weil bound on Kloosterman sums (\ie{}, an effective version of Proposition \ref{prop.kloo}\eqref{kloo.item7}):
\begin{coro} For any finite field $\F$ of characteristic $p\geq 3$, any nontrivial additive character $\psi$ on $\F$ and any $\alpha\in\F^\times$, we have
\[  |\Kl_\F(\psi;\alpha)| 
\leq 2|\F|^{1/2}\cdot\left(1-\frac{2}{\pi^2} \cdot |\F|^{-2c_p}\right),\]
where $c_p$ is the constant in Theorem \ref{theo.minangle}. 
\end{coro}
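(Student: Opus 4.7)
The plan is to translate the angular lower bound from Theorem \ref{theo.minangle} into a bound on $|\cos\theta|$ via the defining relation $\Kl_\F(\psi;\alpha) = 2|\F|^{1/2}\cos\theta_\F(\psi;\alpha)$. Writing $\theta=\theta_\F(\psi;\alpha)$ for short, Theorem \ref{theo.minangle} gives $\theta\in[|\F|^{-c_p},\,\pi-|\F|^{-c_p}]$. Since $|\cos\theta|$ is symmetric about $\pi/2$, it suffices to bound $\cos\varphi$ from above on $\varphi\in[|\F|^{-c_p},\pi/2]$, applying the same bound to $\varphi=\pi-\theta$ on the other half of the interval.

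The key elementary inequality I would invoke is the half-angle form of Jordan's inequality: for every $x\in[0,\pi]$,
\[
1-\cos x \;=\; 2\sin^2(x/2) \;\geq\; 2\left(\tfrac{x}{\pi}\right)^{\!2} \;=\; \tfrac{2x^2}{\pi^2},
\]
where I used $\sin(x/2)\geq (2/\pi)(x/2)$ on $[0,\pi/2]$. Applied to $x=\theta$ if $\theta\in[|\F|^{-c_p},\pi/2]$, this gives $\cos\theta\leq 1-\frac{2}{\pi^2}|\F|^{-2c_p}$; applied to $x=\pi-\theta$ in the other regime, it gives $-\cos\theta\leq 1-\frac{2}{\pi^2}|\F|^{-2c_p}$. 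Combining the two yields
\[
|\cos\theta_\F(\psi;\alpha)| \;\leq\; 1 - \tfrac{2}{\pi^2}\,|\F|^{-2c_p},
\]
and multiplying by $2|\F|^{1/2}$ gives the claimed bound.

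There is no real obstacle here: once Theorem \ref{theo.minangle} is in hand, the argument is purely trigonometric. The only thing worth being careful about is using a lower bound on $1-\cos x$ that matches the constant $2/\pi^2$ in the statement, which is exactly what the half-angle form of Jordan's inequality supplies.
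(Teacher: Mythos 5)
Your proof is correct and follows the same essential approach as the paper: start from the angular lower bound in Theorem \ref{theo.minangle} and convert it into a bound on $|\cos\theta_\F(\psi;\alpha)|$ via a Jordan-type trigonometric inequality. The only difference is cosmetic: you use the half-angle identity $1-\cos x = 2\sin^2(x/2)$ together with $\sin(x/2)\geq x/\pi$, whereas the paper writes $|\cos\theta| = \sqrt{1-\sin^2\theta}$, applies $\sqrt{1-u}\leq 1-u/2$, and then invokes $\sin\theta\geq\frac{2}{\pi}\min\{\theta,\pi-\theta\}$; both routes are equally elementary and produce the same constant $2/\pi^2$.
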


    \begin{proof} 
    By construction, we have $|\Kl_\F(\psi;\alpha)| = 2|\F|^{1/2}\cdot|\cos\theta_\F(\psi;\alpha)|$. Theorem \ref{theo.minangle}  implies that $\theta_\F(\psi; \alpha)$ lies in $[Q, \pi-Q]$ with $Q=(q^a)^{-c_p}$.
    The Corollary follows from the elementary observation that 
    \[\forall \theta\in[Q, \pi-Q], \quad 
    |\cos \theta| = \sqrt{1-\sin^2\theta}\leq 1-\frac{\sin^2 \theta}{2} \leq 1 - \frac{2\min\{\theta, \pi-\theta\}^2}{\pi^2}
    \leq 1- \frac{2Q^2}{\pi^2}. \eqno\square\]
    \end{proof}

The second corollary is more central to our study of the size of $L^\ast(E_{a, \gamma}, 1)$, \cf{} \S\ref{subsec.ST.limit}. 
  
\begin{coro}\label{coro.minangle}
Let $\F_q$ be a finite field of characteristic $p\geq 3$ equipped with a nontrivial additive character~$\psi_q$.  
For any $\gamma\in\F_q^\times$ and any integer $a\geq 1$, the angles $\angn_{\gamma}(v)$ for $v\in P_q(a)$ defined in \S\ref{subsec.anglesnorm} satisfy
\begin{equation}
{(q^a)}^{-c_p} \leq \angn_{\gamma}(v)\leq \pi - {(q^a)}^{-c_p},
\end{equation}
where $c_p$ is the constant 
in Theorem \ref{theo.minangle} above.
\end{coro}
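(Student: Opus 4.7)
The plan is to observe that the corollary is essentially a restatement of Theorem \ref{theo.minangle} applied to the local data attached to each place $v \in P_q(a)$, together with the elementary inequality $d_v \leq a$ valid for all $v \in P_q(a)$.

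First I would unwind the definitions. Fix $v \in P_q(a)$ with degree $d_v$, corresponding to a monic irreducible $B_v \in \F_q[t]$, and pick a root $\beta_v \in \bar{\F_q}^\times$ of $B_v$. By the definition in \S\ref{subsec.kloos.norm}, one has $\Kln_\gamma(v) = \Kl_{\F_v}(\psi_{\F_v}; \gamma\beta_v^2)$ where $\F_v = \F_{q^{d_v}}$ and $\psi_{\F_v} = \psi_q \circ \trace_{\F_v/\F_q}$. Comparing the normalisation \eqref{eq.defi.angle} of $\angn_\gamma(v)$ with the normalisation of $\theta_\F(\psi;\alpha)$ used at the start of \S\ref{sec.small.angles}, we see that $\angn_\gamma(v) = \theta_{\F_v}(\psi_{\F_v}; \gamma\beta_v^2)$. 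Here $\psi_{\F_v}$ is a nontrivial additive character on $\F_v$ (of characteristic $p$) and $\gamma\beta_v^2 \in \F_v^\times$, so the hypotheses of Theorem \ref{theo.minangle} are satisfied.

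Applying Theorem \ref{theo.minangle} with this choice then gives both $\angn_\gamma(v) > |\F_v|^{-c_p}$ and $\pi - \angn_\gamma(v) > |\F_v|^{-c_p}$, where $c_p$ is the constant furnished by that theorem. To finish, I would use that $v \in P_q(a)$ means $d_v$ divides $a$ and hence $d_v \leq a$, so $|\F_v| = q^{d_v} \leq q^a$; thus $|\F_v|^{-c_p} \geq (q^a)^{-c_p}$, yielding the uniform bounds stated in the corollary. There is no real obstacle here since all the work has been done in Theorem \ref{theo.minangle}; the only minor point to be careful about is the translation between the ``local'' angle $\theta_{\F_v}(\psi_{\F_v}; \gamma\beta_v^2)$ and the ``global'' angle $\angn_\gamma(v)$, which as noted above is immediate from the matching of normalisations.
\ProofEnd
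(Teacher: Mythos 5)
Your proposal is correct and takes essentially the same route as the paper: apply Theorem \ref{theo.minangle} to $\F = \F_v$, $\psi = \psi_q \circ \trace_{\F_v/\F_q}$, $\alpha = \gamma\beta_v^2$, and then use $|\F_v| = q^{d_v} \leq q^a$ to pass from $|\F_v|^{-c_p}$ to the uniform bound $(q^a)^{-c_p}$. The paper's proof is terser; your explicit matching of the normalisations $\angn_\gamma(v) = \theta_{\F_v}(\psi_{\F_v};\gamma\beta_v^2)$ is the only step the paper leaves implicit, and it is the right thing to check.
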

 
    \begin{proof}
    For all $v\in P_q(a)$, the residue field $\F_v$ is a subfield of $\F_{q^a}$ and we may choose $\beta_v$ as in \S\ref{subsec.kloos.norm}. 
    Upon noting that $|\F_v|\leq q^a$,    the Corollary follows immediately from Theorem \ref{theo.minangle} applied to $\F=\F_{v}$, $\psi = \psi_q\circ\trace_{\F_v/\F_q}$ and $\alpha=\gamma\beta_v^2$. 
    \ProofEnd \end{proof}
 
\section[Equidistribution]{Distribution of the sums $\Kln_{\gamma}(v)$}
\label{sec.distrib}
    
In this section, we fix again a finite field $\F_q$ of odd characteristic $p$, an element $\gamma\in\F_q^\times$ and a nontrivial additive character $\psi_q$ on $\F_q$ with values in $\Q(\zeta_p)$. 
For any finite extension $\F/\F_q$, we continue denoting by $\psi_\F$ the composition $\psi_q\circ\trace_{\F/\F_q}$ of  $\psi_q$ with the trace $\trace_{\F/\F_q} : \F\to\F_q$.

Loosely speaking, we show that, asymptotically as $a\to\infty$, the numbers $q^{-d_v/2}\cdot\Kln_\gamma(v)$ with $v\in P_q(a)$ (see \S\ref{subsec.kloos.norm}) are distributed in $[-2, 2]$ as ``the traces of random matrices in $\SU(2, \C)$''. In order to make this statement more precise and to prove it, 
we begin by introducing the necessary notations and notions.

Choose a prime number $\ell\neq p$, an algebraic closure $\Qellbar$ of $\Q_\ell$, an embedding $\Qbar\into\Qellbar$, and 
a field isomorphism $\Qellbar\simeq \C$. 
 Through this isomorphism, we view $\psi_q$ as a $\Qellbar$-valued additive character on~$\F_q$.
%

We fix a separable closure $K\sep$ of $K$. 
The set of places 
$v\neq 0, \infty$ of $K$ can be identified with the set of closed points of the multiplicative group $\G_m=\P^1\smallsetminus\{0, \infty\}$ over $\F_q$. 
For a finite extension  $\F/\F_q$ and a point $\alpha\in\G_m(\F)$, 
we denote by $\Frob_{\F, \alpha}$ the geometric Frobenius of $\G_m$ at $\alpha$, which we view as a conjugacy class in the profinite group $\Gal(K\sep/K)$. 
For any closed point $v$ of $\G_m$, we choose $\beta_v\in v$ and we let $\Frob_v:=\Frob_{\F_v, \beta_v}$. 


\subsection{Angles of Kloosterman sums}
\label{subsec.angles.setting}

Let us start by redefining the angles $\ang_\F(\psi_\F;\alpha)$ from a representation-theoretic point of view.
The reader is referred to \cite[Chapter 3]{Katz_GKM} or \cite{Fisher} for more detailed presentations.

In Chapter 4 of \cite{Katz_GKM}, Katz has constructed 
a lisse $\Qellbar$-sheaf $\shKl^\circ$ on $\G_m$ whose Frobenius traces are Kloosterman sums ($\shKl^\circ$ is the so-called Kloosterman sheaf). Taking a suitable Tate twist, one obtains 
a lisse $\Qellbar$-sheaf $\shKl=\shKl^\circ(1/2)$  of rank $2$ on $\G_m$ 
which is pure of weight $0$. 

By definition, $\shKl$ ``is'' a continuous $2$-dimensional $\Qellbar$-representation $\kappa: \Gal(K\sep/K)\to   \GL(2, \Qellbar)$ 
  which is unramified outside $\{0, \infty\}$ and which satisfies the following.  
   For all places $v\neq 0, \infty$ of $K$,   the eigenvalues of $\kappa(\Frob_v)$ have magnitude\footnote{Here we view $\kappa(\Frob_v)\in\Qellbar$ as an element of $\C$ by means of the chosen isomorphism $\Qellbar\simeq\C$} $1$ (``pure of weight 0'') and the trace  of $\kappa(\Frob_v)$ is:
\[   \Trace\big(\kappa(\Frob_{v}) \big)
= |\F_v|^{-1/2}\cdot\Kl_{\F_v}(\psi_{\F_v};\beta_v),\]
where $\beta_v\in\G_m(\bar{\F_q})$ is a choice of element in the closed point of $\G_m$ corresponding to $v$ (as in \S\ref{subsec.kloos.norm}). 
Note that, even though $\Frob_v$ is only defined up to conjugation in $\Gal(K\sep/K)$, its $\Trace\big(\kappa(\Frob_{v}) \big)$ is well-defined.
 
Katz has shown that the image of $\Gal(K\sep/K)$ under $\kappa$ is contained in $\SL(2, \Qellbar)$ (in other words, the representation $\kappa$ has trivial determinant, see \cite[Chap. 11]{Katz_GKM}).
\emph{Via} the chosen isomorphism $\Qellbar\simeq\C$, 
we may view 
$\kappa(\Gal(K\sep/K))$ as a subgroup of $\SL(2, \C)$.
The special unitary group $K:=\SU(2, \C)$ is a maximal compact subgroup of $\SL(2, \C)$ and, since $\SL(2, \C)$ is semisimple, such a $K$ is uniquely determined up to conjugation.
For any place $v\neq 0, \infty$, let 
$\kappa(\Frob_v)\ss$ be the semisimplification of $\kappa(\Frob_v)$: the closure of the subgroup of  $\SL(2, \C)$ generated by all the $\kappa(\Frob_v)\ss$ is compact and thus, up to conjugation in $\SL(2, \C)$, 
lies in $K$.


We denote by $K^\natural$ the set of conjugacy classes of $K$ and we equip $K^\natural$ with 
the measure $\mu^\natural$ obtained as the direct image 
of the Haar measure on $K$ normalised to have total mass $1$. 
The trace of $M\in K$ (or of any element in its conjugacy class) is the 
sum of two conjugate complex number of magnitude $1$, so it is a real number in $[-2,2]$.
More precisely, a matrix $M\in K$ is conjugate (in $K$) to a diagonal matrix  $\mathrm{Diag}(\e^{i\theta_M}, \e^{-i\theta_M})$ for some unique $\theta_M\in[0,\pi]$ and $\Trace M = 2\cos\theta_M$.
Hence, the set $K^\natural$ endowed with~$\mu^\natural$ can be identified with the interval $[0, \pi]$ endowed with the Sato--Tate measure $\muST  := \frac{2}{\pi} \sin^2\theta\dd \theta$ (see \cite[Chap. 13]{Katz_GKM}). 
We identify any angle $\theta\in[0, \pi]$ with the conjugacy class of $\mathrm{Diag}(\e^{i\theta}, \e^{-i\theta}) \in K^\natural$, which we also denote by the same symbol $\theta$.
 
 We are now ready to (re)define angles of Kloosterman sums.
 For any finite extension $\F/\F_q$ and any $\alpha\in\G_m(\F)$, the semisimplification of $\kappa(\Frob_{\F, \alpha}) \in \SL(2,\C)$ is $\SL(2,\C)$-conjugate 
 to an element of $K$, and we can define $\theta_\F(\psi_\F; \alpha)\in K^\natural$ to be the conjugacy class (in $K$) of this element. 
  In the identification between~$K^\natural$ and $[0, \pi]$, this gives us a well-defined angle $\theta_\F(\psi_\F; \alpha)\in[0,\pi]$, see \cite[\S3.3]{Katz_GKM}. 
  
For any finite extension $\F/\F_q$ and any $\alpha\in\G_m(\F)$, we thus have
  \[2\cdot \cos\theta_\F(\psi_\F; \alpha)=  \Trace\big(\kappa(\Frob_{\F, \alpha})\ss \big)= \Trace\big(\kappa(\Frob_{\F, \alpha}) \big)
= |\F|^{-1/2}\cdot\Kl_{\F}(\psi_{\F};\alpha),\]
so that the new definition of $\cos\theta_\F(\psi_\F; \alpha)$ coincides with the one given at the beginning of section \ref{sec.small.angles}.

    \begin{defi} 
    Fix a finite field $\F_q$ equipped with a nontrivial additive character $\psi_q$ and $\gamma\in\F_q^\times$. 
    For any place $v\neq 0, \infty$ of $K$, let $\angn_\gamma(v)$ be the angle associated to the Kloosterman sum $\Kln_\gamma(v)=\Kl_{\F_v}(\psi_{\F_v}; \gamma\beta_v^2)$ by the construction above. 
    In other words, we put $\angn_\gamma(v) := \theta_{\F_v}(\psi_{\F_v};\gamma\beta_v^2)\in K^\natural$.
    \end{defi}

\begin{rema}\label{rema.rela.angles}
Let $\F$ be the finite extension of $\F_q$ with $[\F:\F_q]=a$. 
For an element $\alpha\in\G_m(\F)$, let $w$ be the closed point of $\G_m$ corresponding to $\alpha$ (\ie{}, $w$ is the $\Gal(\bar{\F_q}/\F_q)$-orbit of $\alpha$).
The residue field $\F_w$ is then a subfield of $\F$ and $\Frob_{\F, \alpha} = (\Frob_w)^{[\F:\F_w]}$ as conjugacy classes in $\Gal(K\sep/K)$. 
Therefore $\kappa(\Frob_{\F, \alpha}) = \kappa(\Frob_w)^{[\F:\F_w]}$ and
\[ \tfrac{a}{\deg w} \cdot \theta_{\F_w}(\psi_{\F_w};\alpha)  = [\F:\F_w] \cdot \theta_{\F_w}(\psi_{\F_w};\alpha) \equiv \theta_{\F}(\psi_\F;\alpha) \bmod{\pi}. \]
In particular, when $v$ is a closed point of $\G_m$ whose degree divides $a$ and when $\beta_v\in v$, by  our definition  $\angn_{\gamma}(v) = \ang_{\F_v}(\psi_{\F_v}; \gamma\beta_v^2)$, the relation above reads
\begin{equation}\label{eq.rela.angles}
\tfrac{a}{\deg v}\cdot\angn_{\gamma}(v) 
\equiv  \ang_{\F}(\psi_{\F}; \gamma\beta_v^2) \bmod{\pi}.
\end{equation}
\end{rema}

\subsection{Statement of results}
\label{subsec.setting}
  
Denote by $\muST $ the Sato--Tate measure $\frac{2}{\pi}\sin^2\theta\dd \theta$ on $[0,\pi]$. 
A sequence of Borel measures $\{\mu_i\}_{i\geq 1}$ on $[0,\pi]$ is said to converge weak-$\ast$ to $\muST $ if, for every continuous $\C$-valued function $f$ on $[0, \pi]$, the sequence of $\int_{[0, \pi]} f\dd\mu_i$ converges to $\int_{[0,\pi]}f\dd\muST $ as $i\to\infty$.
 
\paragraph{}
Our results concern two sequences of probability measures that we now introduce. 

\begin{defi}
We fix a finite field $\F_q$ of characteristic $p\geq 3$, a nontrivial additive character $\psi_q$ on $\F_q$ and~$\gamma\in\F_q^\times$.
For an integer $a\geq 1$, we again denote by $P_q(a)$ the set of closed points of $\G_m$ whose degrees divide $a$. For all integers $a\geq 1$, we define
\begin{equation}\notag{}
\nu{(\F_q, \psi_q, \gamma; a)}
:= \frac{1}{|P_q(a)|} \cdot \sum_{v\in P_q(a)} \delta{\{\angn_{\gamma}(v)\}},
\end{equation}
where $\delta{\{x\}}$ denotes the Dirac delta measure at $x\in [0,\pi]$.
For all finite extensions $\F/\F_q$, we also define
\begin{equation}\notag{}
\xi{(\F_q, \psi_q, \gamma; \F)}
:= \frac{1}{|\G_m(\F)|} \cdot \sum_{\beta\in\G_m(\F)} \delta{\left\{\theta_{\F}(\psi_{\F}; \gamma\beta^2)\right\}}. 
\end{equation}
In what follows, we abbreviate $\nu{(\F_q, \psi_q, \gamma; a)}$ by $\nu_a$ and   $\xi{(\F_q, \psi_q, \gamma; \F_{q^a})}$ by $\xi_a$.
\end{defi}

Clearly, both $\nu_a$ and $\xi_a$ are Borel measures on $[0,\pi]$ with total mass $1$. 
It follows from the discussion in~\S\ref{subsec.angles.setting} that we may view  $\nu_a$ and $\xi_a$ as measures on $K^\natural$; we use both points of view interchangeably.
Moreover, we note that $\xi_a$ is also given by\footnote{The measure $\xi_a$ is the measure denoted by $X_a$ in \cite[\S3.5]{Katz_GKM} applied to our situation.}  
\begin{equation}\label{eq.rela.xi}
\xi_a = \xi{(\F_q, \psi_q, \gamma; \F_{q^a})}
= \frac{1}{|\G_m(\F_{q^a})|} \cdot \sum_{v\in P_q(a)} \deg v\cdot\delta{\big\{\tfrac{a}{\deg v}\cdot\angn_{\gamma}(v)\big\}}, 
\end{equation}
where $\tfrac{a}{\deg v}\cdot\angn_{\gamma}(v)$ is to be understood modulo $\pi$ (see Remark \ref{rema.rela.angles}). 

\begin{rema}\label{rema.minangle.suppmeas}
In terms of the measure $\nu_a$, Corollary \ref{coro.minangle} can be reinterpreted as follows: given  $\F_q, \psi_q$ and $\gamma$ as above, for any $a\geq 1$ the support of the probability measure $\nu_a$ on $[0, \pi]$  is contained in $[(q^a)^{-c_p}, \pi  -  (q^a)^{-c_p}]$. 
\end{rema}

\paragraph{}
We can now state the two main results of this section. 
First we show that the angles $\{\angn_\gamma(v)\}_{v\in P_q(a)}$ are asymptotically equidistributed with respect to the Sato--Tate measure as $a\to\infty$. 
Namely, 

\begin{theo}\label{theo.ST} 
Assume we are given a datum $\F_q, \psi_q, \gamma$ as above. 
Then the sequences $\{\xi_a\}_{a\geq 1}$ and $\{\nu_a\}_{a\geq 1}$ of Borel probability measures both converge weak-$\ast$ to the Sato--Tate measure $\muST $ when $a\to\infty$.
\end{theo}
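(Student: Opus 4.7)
The plan is to prove convergence of $\xi_a$ first, by applying Deligne's equidistribution theorem to a suitable pullback of the Kloosterman sheaf, and then to deduce convergence of $\nu_a$ by a weight-comparison argument. To begin, I would introduce the morphism $\mu_\gamma \colon \G_m \to \G_m$ of $\F_q$-schemes defined by $\beta \mapsto \gamma\beta^2$. Since $p$ is odd and $\gamma \in \F_q^\times$, this is a finite étale cover of degree $2$, and the pullback $\shF_\gamma := \mu_\gamma^* \shKl$ is a lisse $\Qellbar$-sheaf on $\G_m$ of rank $2$, pure of weight $0$. Its Frobenius traces satisfy
\[
\Trace\bigl(\Frob_{\F,\beta}\mid\shF_\gamma\bigr)
= |\F|^{-1/2}\cdot\Kl_\F\bigl(\psi_\F;\gamma\beta^2\bigr)
= 2\cos\theta_\F\bigl(\psi_\F;\gamma\beta^2\bigr)
\]
for every finite extension $\F/\F_q$ and every $\beta \in \G_m(\F)$, so the measure $\xi_a$ fits naturally in the framework of \cite[Chap.\,3]{Katz_GKM}.

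To apply Deligne's equidistribution theorem I need the geometric monodromy group $G_{\mathrm{geom}}(\shF_\gamma)$. Katz has shown that $G_{\mathrm{geom}}(\shKl) = \SL(2)$; \emph{a priori}, pulling back along a degree-$2$ étale cover can only shrink the monodromy to a subgroup of index at most $2$, but the Zariski closure of such a subgroup of $\SL(2)$ is still $\SL(2)$ because $\SL(2)$ is connected as an algebraic group. Making this rigorous in the present situation is where the monodromy computations of Fu and Liu \cite{FuLiu} enter: they confirm that $G_{\mathrm{geom}}(\shF_\gamma) = \SL(2)$. Granting this, Deligne's theorem (in the form used by Katz) yields weak-$*$ convergence of $\xi_a$ to the pushforward of the Haar measure on $K = \SU(2,\C)$ to $K^\natural$, which coincides with $\muST$ under the identification of $K^\natural$ with $[0,\pi]$ recalled in \S\ref{subsec.angles.setting}.

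To pass from $\xi_a$ to $\nu_a$, I would use the reformulation \eqref{eq.rela.xi}: for any continuous $f \colon [0,\pi] \to \C$,
\[
\int_0^\pi f\dd\nu_a - \int_0^\pi f\dd\xi_a
= \sum_{v \in P_q(a)}\Bigl(\frac{f(\angn_\gamma(v))}{|P_q(a)|} - \frac{d_v\cdot f\bigl(\tfrac{a}{d_v}\angn_\gamma(v) \bmod \pi\bigr)}{q^a-1}\Bigr).
\]
Splitting this sum according to whether $d_v = a$ or $d_v < a$, Lemma \ref{lemm.estimates} shows that the places with $d_v < a$ carry total mass $O(a q^{a/2}/q^a) = o(1)$ in both measures, while for places with $d_v = a$ the factor $a/d_v$ equals $1$ (so the two evaluations of $f$ agree) and the weights $1/|P_q(a)|$ and $a/(q^a-1)$ differ by a multiplicative factor $1 + o(1)$, once again by Lemma \ref{lemm.estimates}. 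Hence $\int f\dd\nu_a - \int f\dd\xi_a \to 0$, and the equidistribution of $\xi_a$ transfers to $\nu_a$. The main obstacle in this plan is thus the monodromy step: Katz's original arguments concern $\shKl$ itself and do not cover the pulled-back sheaf directly, so the work of Fu--Liu is essential to confirm $G_{\mathrm{geom}}(\shF_\gamma) = \SL(2)$; this is also where the hypothesis $p \geq 3$ (which guarantees étaleness of $\mu_\gamma$) truly enters.
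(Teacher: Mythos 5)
Your argument is correct, but it takes the route that the paper only sketches in a remark rather than the one it actually carries out. The paper's proof of the $\xi_a$-convergence does not pull back the Kloosterman sheaf at all: it stays at the level of exponential sums, substitutes $\alpha=\gamma\beta^2$ and decomposes the resulting sum using the quadratic character $\lambda_\F$, and then feeds the two pieces into an explicit estimate of Fu and Liu (Lemma 4 of \cite{FuLiu}) for sums of $\chi_\F(\alpha)\Trace\Lambda(\theta_\F(\psi_\F;\alpha))$, yielding the quantitative bound $\ll_q \dim\Lambda\cdot q^{-a/2}$ (Proposition \ref{prop.ST1}). Your sheaf-theoretic route --- forming $\mu_\gamma^\ast\shKl$ and invoking Deligne's equidistribution theorem --- is precisely the alternative that the paper records in the remark following that proposition. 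It is worth noting that your index-two argument (the image of $\pi_1^{\mathrm{geom}}$ of the source has finite index in $\pi_1^{\mathrm{geom}}$ of the target, and a finite-index subgroup of a Zariski-dense subgroup of a connected group is again Zariski-dense) already establishes $G_{\mathrm{geom}}(\shF_\gamma)=\SL(2)$ on its own; you do not actually need Fu--Liu to ``confirm'' it, so your characterisation of their role as essential for the monodromy computation somewhat undersells your own argument. What the paper's more pedestrian approach buys is an immediate explicit decay rate in $a$, which it then reuses to derive the effective discrepancy bound (Theorem \ref{theo.ST.eff}); the qualitative Deligne theorem as you invoke it gives only weak-$\ast$ convergence, although the Weyl-sum estimates underlying Katz's proof would of course also give the rate if extracted. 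For the passage from $\xi_a$ to $\nu_a$, your direct comparison (split at $d_v=a$ versus $d_v<a$ and use Lemma \ref{lemm.estimates}) is the same idea as the paper's Proposition \ref{prop.ST2}, which introduces the signed measure $W_a=|\G_m(\F_{q^a})|\xi_a - a|P_q(a)|\nu_a$ and bounds $\int\Trace\Lambda\dd W_a$ by counting Dirac masses; your version has the small advantage of working for arbitrary bounded continuous $f$ rather than just trace functions, but both are elementary counting arguments and both are correct.
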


This statement concretely means that, for all continuous functions $f$ on $[0,\pi]$, we have
\begin{equation}\label{eq.ST.concrete}
\frac{1}{|P_q(a)|} \cdot \sum_{v\in P_q(a)} f\big(\angn_{\gamma}(v)\big)
=\int_{[0,\pi]} f \dd\nu_a \xrightarrow[a\to\infty]{} \int_{[0, \pi]} f\dd\muST  = \frac{2}{\pi} \int_0^\pi f(t)\sin^2(t)\dd t.
\end{equation}
It will be proven in Propositions \ref{prop.ST1} and \ref{prop.ST2} by a suitable adaptation of the arguments in \cite[Chap.3]{Katz_GKM} and \cite[\S2]{FuLiu}. 

In the course of proving Theorem \ref{theo.ST.ext}, we will need a more effective version of \eqref{eq.ST.concrete}: indeed, we require an estimate of the rate at which  $\int_{[0,\pi]} f \dd\nu_a$ converges to $\int_{[0, \pi]} f\dd\muST $, at least for a smaller class of functions~$f$. 
This is the object of the second result in this section: 

\begin{theo}\label{theo.ST.eff} 
Assume we are given a datum $\F_q, \psi_q, \gamma$ as above. For any continuously differentiable function $g$ on $[0,\pi]$, we have
\begin{equation}\label{eq.ST.effective}
\left|\int_{[0,\pi]} g \dd\nu_a - \int_{[0, \pi]} g\dd\muST  \right| \ll_q \frac{a^{1/2}}{q^{a/4}} \cdot \int_{0}^\pi |g'(t)|\dd t \qquad (\text{as } a\to\infty).
\end{equation}
\end{theo}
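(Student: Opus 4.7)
The proof strategy combines a Koksma--Erdős--Turán discrepancy inequality with cohomological bounds on the trigonometric moments of $\nu_a$ coming from Deligne's Weil~II applied to symmetric power Kloosterman sheaves. For a $C^1$ test function $g$ on $[0,\pi]$, Koksma's inequality $|\int g\dd\nu_a - \int g\dd\muST| \leq V(g)\cdot D(\nu_a;\muST)$ (where $V(g)=\int_0^\pi|g'|\dd t$ is the total variation, and $D(\nu_a;\muST)$ is the discrepancy) reduces the problem to proving $D(\nu_a;\muST)\ll_q a^{1/2}q^{-a/4}$. This discrepancy is in turn controlled through the moments of $\nu_a$ against the irreducible characters $\chi_n(\theta)=\sin((n+1)\theta)/\sin\theta$ of $\SU(2,\C)$, which form an orthonormal basis of $L^2([0,\pi],\muST)$.

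The central ingredient is a quantitative moment bound: for each integer $n\geq 1$,
\[\left|\int_{[0,\pi]} \chi_n\dd\nu_a\right| \ll_q \frac{a\cdot n}{q^{a/2}}.\]
To obtain this, the plan is to rewrite the moment as $|P_q(a)|^{-1}\sum_{v\in P_q(a)}\chi_n(\angn_\gamma(v))$ and, using \eqref{eq.rela.xi} together with Möbius inversion, to compare it with the complete sum $\sum_{\alpha\in\G_m(\F_{q^a})}\chi_n(\theta_{\F_{q^a}}(\psi_{\F_{q^a}};\gamma\alpha^2))$. The latter is the Frobenius trace sum associated to the pullback of the symmetric power Kloosterman sheaf $\symm^n\shKl$ along the tame covering $\alpha\mapsto\gamma\alpha^2$ (note $p\geq 3$); by the Grothendieck--Lefschetz trace formula and Deligne's Weil~II bounds, it is dominated by $\mathrm{rk}(H^1_c)\cdot q^{a/2}$, the only surviving cohomology group since $\symm^n\shKl$ is geometrically irreducible by Katz's $\SL_2$-monodromy theorem (\cite[Chap.~11]{Katz_GKM}). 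The rank of $H^1_c$ grows at most linearly in $n$ by the cohomological analysis of Fu--Liu \cite{FuLiu}, already invoked in the proof of Theorem~\ref{theo.ST}. The contribution of closed points $v\in P_q(a)$ with $d_v$ a proper divisor of $a$ is $O(q^{a/2})$ by the Weil bound (\emph{cf.} Lemma~\ref{lemm.estimates}), hence lower order. The extra factor $a$ in the moment bound stems from normalizing by $|P_q(a)|\asymp q^a/a$.

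With this moment bound in hand, one invokes an Erdős--Turán type inequality adapted to the Sato--Tate measure (as in \cite{Niederreiter_Klo}): for every integer $N\geq 1$,
\[D(\nu_a;\muST) \;\ll\; \frac{1}{N} + \sum_{n=1}^{N}\frac{1}{n}\left|\int\chi_n\dd\nu_a\right| \;\ll_q\; \frac{1}{N} + \frac{a\cdot N}{q^{a/2}}.\]
Optimizing at $N\asymp q^{a/4}/a^{1/2}$ yields $D(\nu_a;\muST)\ll_q a^{1/2}q^{-a/4}$, and plugging this into Koksma's inequality gives exactly \eqref{eq.ST.effective}.

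The main obstacle is the moment bound: getting the correct dependence in both $n$ and $a$ requires a careful study of the pullback of $\symm^n\shKl$ along the squaring map composed with multiplication by $\gamma$. One must check that this pullback remains geometrically irreducible (so that $H^0_c$ and $H^2_c$ vanish) and that the rank of $H^1_c$ is controlled linearly in $n$. Both properties ultimately rest on Katz's monodromy analysis in \cite{Katz_GKM} and the cohomological rank estimates of Fu--Liu \cite{FuLiu}; their reuse here, together with the Möbius-type accounting for closed points of low degree, forms the technical core of the argument.
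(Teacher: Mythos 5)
Your proposal reproduces the paper's architecture: reduce via Niederreiter's Koksma-type inequality (Theorem~\ref{theo.koksma}) to a bound on the star discrepancy $\dis_{q,\gamma}(a)$, control the discrepancy via the Sato--Tate Erd\H{o}s--Tur\'an inequality in terms of the moments $\mom_n(a)$, insert the moment bound $|\mom_n(a)|\ll_q (n+1)\,a\,q^{-a/2}$ (Proposition~\ref{prop.ST2}), and optimize the truncation parameter to obtain $\dis_{q,\gamma}(a)\ll_q a^{1/2}q^{-a/4}$, which plugged into Koksma's inequality gives \eqref{eq.ST.effective}. The one place where you diverge from the paper in detail is the proof of the moment bound: the paper's Proposition~\ref{prop.ST1} rewrites the sum over $\gamma\beta^2$ as an untwisted plus a $\lambda$-twisted Frobenius trace sum and invokes Fu--Liu's Lemma~4 verbatim, whereas you propose to pull back $\symm^n\shKl$ along $\beta\mapsto\gamma\beta^2$ and redo the Weil~II plus rank/irreducibility analysis directly --- a route the paper itself sketches as a viable alternative in the remark just after Proposition~\ref{prop.ST1}. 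Your M\"obius-style accounting for closed points of degree strictly dividing $a$ plays the same role as the paper's correction measure $W_a$ in Proposition~\ref{prop.ST2} and yields the same extra factor of $a$.
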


This will follow from the proof of Theorem \ref{theo.ST} coupled with tools from distribution theory (see \cite{Niederreiter_Klo}).

We remark that the constants in Theorems \ref{theo.ST} and  \ref{theo.ST.eff} depend at most on $q$ (and neither on the choice of $\psi_q$  nor on the value of $\gamma\in\F_q^\times$).

\subsection[Equidistribution of the angles]{Equidistribution of $\angn_\gamma(v)$'s}
\label{subsec.equidis}
  
In this subsection, we prove Theorem \ref{theo.ST} in two steps (Propositions \ref{prop.ST1} and \ref{prop.ST2}). 
Let us first make a reduction (see \cite[\S3.4, \S3.5]{Katz_GKM} for more details). 
We need to show that, for all complex-valued continuous functions $f$ on $[0,\pi]$, we have
\begin{equation}\notag{}
\int_{[0,\pi]} f \dd\xi_a, \ \ \int_{[0,\pi]} f \dd\nu_a \xrightarrow[a\to\infty]{} \int_{[0, \pi]} f\dd\muST.
\end{equation}
Since $K$ is compact and since $[0,\pi]$ can be identified with $K^\natural$, there is a natural correspondence between the space of continuous functions on $[0,\pi]$ and the space $\mathscr{C}^0_{cent}(K)$ of continuous central functions on ${K=\SU(2, \C)}$.
When $\mathscr{C}^0_{cent}(K)$ is endowed with the topology of the supremum norm, the Peter-Weyl theorem asserts that the vector subspace generated by characters of irreducible finite-dimensional representations of $K$ is dense in $\mathscr{C}^0_{cent}(K)$. 
By density, Theorem \ref{theo.ST} will follow if we can show that, for all irreducible finite-dimensional representations $\Lambda$ of $K$,
\begin{equation}\notag{}
\int_{K^\natural} \Trace\Lambda \dd\xi_a, \ \ \int_{K^\natural} \Trace\Lambda \dd\nu_a \xrightarrow[a\to\infty]{} \int_{K^\natural} \Trace\Lambda\dd\mu^\natural.
\end{equation}
If $\Lambda_0$ is the trivial representation of $K$, $\Trace\Lambda_0$ is the trivial character $\trivcar$ on $K$ and the above limits trivially hold because the measures $\nu_a$, $\xi_a$ and $\mu^\natural$ on $K^\natural$ all have total mass $1$. 
By orthogonality of characters, the integral $\int_{K^\natural}\Trace\Lambda\dd\mu^\natural$ on the right-hand side vanishes when $\Lambda$ is a nontrivial irreducible finite-dimensional representation of $K$.
Hence,  the proof of Theorem \ref{theo.ST} reduces to that of the following statement, which is an analogue to Weyl's criterion for uniform distribution (see \cite[Chap. 4, \S1]{KuNi}) in the Sato--Tate context: for any nontrivial irreducible finite-dimensional representation $\Lambda$ of $\SU(2, \C)$, one has
\begin{equation}\notag{}
\int_{K^\natural} \Trace\Lambda \dd\xi_a, \ \ \int_{K^\natural} \Trace\Lambda \dd\nu_a \xrightarrow[a\to\infty]{} 0.
\end{equation}  
We actually prove slightly more precise estimates.
 
\begin{prop}\label{prop.ST1}
Fix $\F_q, \psi_q$ and $\gamma\in\F_q^\times$ as in \S\ref{subsec.setting}.
Let $\Lambda$ be a nontrivial irreducible representation of~$K=\SU(2, \C)$. For all $a\geq 1$, one has
\begin{equation}\label{eq.ST.meas1}
\left| \int_{K^\natural} \Trace\Lambda  \dd\xi_{a}\right|
\ll_q   \frac{\dim\Lambda}{q^{a/2}}.
\end{equation}
\end{prop}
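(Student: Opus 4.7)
The plan is to apply the Grothendieck--Lefschetz trace formula to a suitable pullback of the Kloosterman sheaf, following Katz's strategy in \cite[Chap.\,3]{Katz_GKM}. Since the representation theory of $K=\SU(2,\C)$ is generated by the symmetric powers $\symm^n(\mathrm{std})$ of the standard representation, and since the reduction described in \S\ref{subsec.equidis} allows us to fix a single nontrivial irreducible $\Lambda$, it suffices to treat the case $\Lambda=\symm^n$ for an integer $n\geq 1$ (of dimension $n+1$).

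Writing $f:\G_m\to\G_m$ for the finite \'{e}tale map $\beta\mapsto \gamma\beta^2$ (of degree~$2$ since $p$ is odd), and $\shF_n := f^\ast\symm^n(\shKl)$ for the corresponding pullback sheaf on $\G_m/\F_q$, the integral we wish to estimate reads
\begin{equation*}
\int_{K^\natural}\Trace\Lambda\dd\xi_a
=\frac{1}{|\G_m(\F_{q^a})|}\sum_{\beta\in\G_m(\F_{q^a})}\Trace\bigl(\kappa(\Frob_{\F_{q^a},\gamma\beta^2})^{\mathrm{s.s.}}\,\big|\,\symm^n\bigr)
=\frac{1}{q^a-1}\sum_{\beta\in\G_m(\F_{q^a})}\Trace\bigl(\Frob_{\F_{q^a},\beta}\,\big|\,\shF_n\bigr).
\end{equation*}
By the Grothendieck--Lefschetz trace formula the inner sum equals $\sum_{i=0}^{2}(-1)^i\Trace(\Frob_{q^a}\,|\,\H^i_c(\G_m\otimes\bar{\F_q},\shF_n))$, so the task reduces to controlling these three cohomology groups.

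The vanishing of $\H^0_c$ is automatic since $\G_m$ is a smooth affine curve and $\shF_n$ is lisse. For~$\H^2_c$, I would use the projection formula for the finite map $f$: $f_\ast f^\ast\symm^n(\shKl)\simeq \symm^n(\shKl)\otimes f_\ast\Qellbar$, combined with the decomposition $f_\ast\Qellbar\simeq \Qellbar\oplus \shG$, where $\shG$ is the rank-one Kummer sheaf attached to the quadratic character of the $2$-torsion cover $[2]$. Consequently $\H^2_c(\G_m,\shF_n)\simeq \H^2_c(\G_m,\symm^n(\shKl))\oplus \H^2_c(\G_m,\symm^n(\shKl)\otimes\shG)$. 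Katz's monodromy theorem \cite[Chap.\,11]{Katz_GKM} asserts that the geometric monodromy group of $\shKl$ is $\SL(2)$, so $\symm^n(\shKl)$ is geometrically irreducible and nontrivial for all $n\geq 1$; tensoring by the rank-one sheaf $\shG$ preserves geometric irreducibility and rank, so $\symm^n(\shKl)\otimes\shG$ is also geometrically irreducible and cannot be the trivial sheaf (which has rank $1$, whereas $n+1\geq 2$). Both summands therefore have vanishing coinvariants, hence $\H^2_c(\G_m,\shF_n)=0$. Incidentally, this step is the real crux of the argument and where \cite{FuLiu} is most useful for the careful monodromy bookkeeping after pullback.

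Once both $\H^0_c$ and $\H^2_c$ vanish, the only cohomology contributing is $\H^1_c$. By Deligne's Weil\,II estimates, every eigenvalue of $\Frob_{q^a}$ on $\H^1_c(\G_m\otimes\bar{\F_q},\shF_n)$ has complex magnitude $\leq q^{a/2}$, since $\shKl$ is pure of weight~$0$ and tensoring, pulling back by a finite \'{e}tale map, and taking symmetric powers all preserve weight. Hence
\begin{equation*}
\Bigl|\sum_{\beta\in\G_m(\F_{q^a})}\Trace(\Frob_{\F_{q^a},\beta}\,|\,\shF_n)\Bigr|
\leq \dim_{\Qellbar}\H^1_c(\G_m\otimes\bar{\F_q},\shF_n)\cdot q^{a/2}.
\end{equation*}
A standard application of the Grothendieck--Ogg--Shafarevich Euler--Poincar\'{e} formula, using that $\shKl$ is tame at $0$ and totally wild with Swan conductor $1$ at $\infty$, bounds the rank of $\H^1_c$ by a constant (depending only on $q$) times~$n+1 = \dim\Lambda$. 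Dividing by $|\G_m(\F_{q^a})|=q^a-1\gg_q q^a$ then gives the asserted estimate $\ll_q \dim\Lambda/q^{a/2}$.
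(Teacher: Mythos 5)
Your proof is correct, and it takes a genuinely different (though spiritually parallel) route from the paper's. The paper's proof is a short character-sum manipulation: using $1+\lambda_\F(\alpha)$ to count preimages under squaring, it writes $\sum_\beta \Trace\Lambda(\theta_\F(\psi_\F;\gamma\beta^2))$ as a sum of the plain Kloosterman-angle moment and the quadratically twisted one, then invokes Lemma~4 of Fu and Liu, which gives $\big|\sum_\alpha \chi_\F(\alpha)\Trace\Lambda(\theta_\F(\psi_\F;\alpha))\big|\le\tfrac{\dim\Lambda}{2}|\F|^{1/2}$, as a black box. Your cohomological argument — Lefschetz on the pullback $f^\ast\symm^n(\shKl)$, then the projection formula $f_\ast f^\ast\symm^n(\shKl)\simeq\symm^n(\shKl)\oplus\bigl(\symm^n(\shKl)\otimes\shG\bigr)$, then Katz's monodromy and Weil~II plus Grothendieck--Ogg--Shafarevich for the $\H^1_c$ bound — is exactly the machinery that \emph{underlies} the Fu--Liu lemma, so your proof fills in what the paper outsources. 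The decomposition $\Qellbar\oplus\shG$ under $f_\ast$ is the sheaf-theoretic counterpart of the paper's $1+\lambda_\F$ substitution, so the two proofs perform the same split, just at different levels of abstraction. Notably, the paper itself flags your route in a remark immediately after the proposition, suggesting one could instead work with $f^\ast\shKl$ and run Katz's \S3.6 machinery directly, citing Remark~1 of \cite{FuLiu} for the monodromy input. In short: the paper's proof is shorter at the cost of citing Fu--Liu; yours is self-contained modulo Katz's monodromy theorem and standard $\ell$-adic tools. One small quibble: your aside attributing ``monodromy bookkeeping after pullback'' to \cite{FuLiu} slightly misplaces their role — the monodromy input is Katz's, and Fu--Liu is most pertinent for the explicit constant and the twist by $\lambda_\F$, not for the monodromy computation itself.
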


	\begin{proof}  
	For any finite extension $\F/\F_q$, notice that 
	\begin{align*}
	\sum_{\beta\in\F^\times} \Trace\Lambda\big(\theta_\F(\psi_\F; \gamma\beta^2)\big)
	&= \sum_{\alpha'\in\F^\times} \left(1+\lambda_\F(\alpha')\right)\cdot\Trace\Lambda\big(\theta_\F(\psi_\F; \gamma\alpha')\big) \\
	&= \sum_{\alpha\in\F^\times} \Trace\Lambda\big(\theta_\F(\psi_\F;\alpha)\big) 
	+ \lambda_\F(\gamma^{-1}) \cdot \sum_{\alpha\in\F^\times} \lambda_\F(\alpha)\cdot\Trace\Lambda\big(\theta_\F(\psi_\F;\alpha)\big),
	\end{align*}
	where $\lambda_\F$ denotes the unique nontrivial character of order $2$ on $\F^\times$. 
	Therefore, one has
	\begin{equation}\label{eq.ST1.interm}
	\left|\sum_{\beta\in\F^\times} \Trace\Lambda\big(\theta_\F(\psi_\F; \gamma\beta^2)\big)\right|
	\leq \left|\sum_{\alpha\in\F^\times} \Trace\Lambda\big(\theta_\F(\psi_\F;\alpha)\big) \right| + \left|\sum_{\alpha\in\F^\times} \lambda_\F(\alpha)\cdot\Trace\Lambda\big(\theta_\F(\psi_\F;\alpha)\big)\right|.
	\end{equation}
	For any multiplicative character $\chi$ on $\F_q^\times$ and any finite extension $\F/\F_q$, we denote by $\chi_\F:=\chi\circ\norm_{\F/\F_q}$ the character on $\F^\times$ ``lifted'' by the norm $\norm_{\F/\F_q}:\F\to\F_q$.  
	The crucial input is a result of Fu and Liu (see Lemma 4 in \cite{FuLiu}) who have proved that, for every multiplicative character $\chi$ on $\F_q$, one has
	\begin{equation}\notag{}
	\left|\sum_{\alpha\in\G_m(\F)}\chi_\F(\alpha)\cdot \Trace\Lambda\big(\theta_\F(\psi_\F;\alpha)\big)\right| \leq \frac{\dim\Lambda}{2}\cdot |\F|^{1/2}.
	\end{equation}
	Applying this inequality successively to both multiplicative characters on $\F_q$ whose order divides $2$, we deduce from \eqref{eq.ST1.interm} that 
	\begin{equation}\notag{}
	\left|\sum_{\beta\in\F^\times} \Trace\Lambda\big(\theta_\F(\psi_\F; \gamma\beta^2)\big)\right| \leq (\dim\Lambda)\cdot |\F|^{1/2}.
	\end{equation}
	Therefore, for any finite extension $\F/\F_q$, we have proved that
	\[\left| \int_{K^\natural} \Trace\Lambda  \dd\xi{(\F_q, \psi_q, \gamma; \F)}\right| 
	= \left| \frac{1}{|\G_m(\F)|}\sum_{\beta\in\F^\times} \Trace\Lambda\big(\theta_\F(\psi_\F; \gamma\beta^2)\big)\right|
	\ll_q \dim \Lambda \cdot |\F|^{-1/2}.\]
	Specialising to $\F=\F_{q^a}$ yields the desired estimate since $\xi_a=\xi{(\F_q, \psi_q, \gamma; \F_{q^a})}$. 
	\ProofEnd\end{proof}

\begin{rema} 
Let us suggest an alternative way of proving  Proposition~\ref{prop.ST1}.
Denote again by $\shKl$ the Kloosterman sheaf (suitably twisted to be pure of weight $0$) whose existence was proved by Katz. 
For a given $\gamma\in\F_q^\times$, consider the morphism $f:\G_m\to\G_m$ given by $\beta\mapsto\gamma\beta^2$, and put $\shG_\gamma:=f^\ast\shKl$. 
Then $\shG_\gamma$ is again a lisse $\Qellbar$-sheaf of rank $2$ on $\G_m$ which is pure of weight $0$. 
In the case where $\gamma=1$, Remark 1 in \cite{FuLiu} sketches a proof that $\shG_\gamma$ satisfies the assumptions of \cite[\S3.1-\S3.3]{Katz_GKM}. 
One could therefore prove Proposition \ref{prop.ST1} 
by making use of \cite[\S3.6]{Katz_GKM}. 

This argument should carry over to the case of an arbitrary $\gamma\neq 0$.
\end{rema}


To complete the proof of Theorem \ref{theo.ST}, it remains to show that $\{\nu_a\}_{a\geq1}$ also converges (weak-$\ast$) to $\muST $.

\begin{prop}\label{prop.ST2}
Fix a datum $\F_q, \psi_q$ and $\gamma\in\F_q^\times$ as in \S\ref{subsec.setting}.
Let $\Lambda$ be a nontrivial irreducible representation of~$\SU(2, \C)$. For all $a\geq 1$, one has
\begin{equation}\label{eq.ST.meas2}
\left| \int_{K^\natural} \Trace\Lambda  \dd\nu_{a}\right|
\ll_q \dim\Lambda \cdot \frac{a}{q^{a/2}}.
\end{equation}
\end{prop}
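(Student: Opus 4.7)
The approach is to express the sum defining $\int\Trace\Lambda\,\dd\nu_a$ in terms of analogous sums against $\xi_d$ for each divisor $d \mid a$ --- all of which are controlled by Proposition~\ref{prop.ST1} --- and to extract each degree-$d$ contribution by isolating the ``diagonal'' term in the identity \eqref{eq.rela.xi} and bounding the remaining terms trivially. Writing $V_a := \sum_{v \in P_q(a)} \Trace\Lambda(\angn_\gamma(v))$, we have $\int_{K^\natural}\Trace\Lambda\,\dd\nu_a = V_a/|P_q(a)|$, and I group by degree: $V_a = \sum_{d \mid a} h(d)$, with $h(d) := \sum_{v\colon d_v = d}\Trace\Lambda(\angn_\gamma(v))$.

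The key identity follows from \eqref{eq.rela.xi} applied with $a$ replaced by $d$: for every $d \mid a$,
\begin{equation*}
(q^d-1)\int_{K^\natural}\Trace\Lambda\,\dd\xi_d \;=\; d\,h(d) + R_d,
\qquad R_d := \sum_{\substack{d'\mid d\\ d'<d}} d' \sum_{v\colon d_v=d'} \Trace\Lambda\!\left(\tfrac{d}{d'}\angn_\gamma(v)\right),
\end{equation*}
since the contribution of $v$ with $d_v = d$ to $\sum_{v\in P_q(d)} d_v \Trace\Lambda(\tfrac{d}{d_v}\angn_\gamma(v))$ is exactly $d\,h(d)$. Proposition~\ref{prop.ST1} bounds the left-hand side by $\ll_q \dim\Lambda \cdot q^{d/2}$. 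For $R_d$, I use the trivial pointwise bound $|\Trace\Lambda|\leq \dim\Lambda$ together with $\pi_q(d') \ll_q q^{d'}/d'$ from Lemma~\ref{lemm.estimates}: each inner block $d' \sum_{v\colon d_v=d'}\Trace\Lambda(\cdot)$ is thus $\ll \dim\Lambda\cdot q^{d'}$, and since every proper divisor $d'<d$ satisfies $d'\leq d/2$, summing the geometric series gives $|R_d|\ll_q \dim\Lambda\cdot q^{d/2}$. Combining, $|d\,h(d)|\ll_q\dim\Lambda\cdot q^{d/2}$, hence $|h(d)|\ll_q \dim\Lambda\cdot q^{d/2}/d$.

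Summing over divisors, $|V_a| \leq \sum_{d\mid a}|h(d)| \ll_q \dim\Lambda \sum_{d\mid a} q^{d/2}/d$, which is in turn bounded by the full geometric series $\sum_{d=1}^{a} q^{d/2} \ll_q q^{a/2}$; therefore $|V_a|\ll_q \dim\Lambda\cdot q^{a/2}$. Combining with $|P_q(a)|\gg_q q^a/a$ (Lemma~\ref{lemm.estimates}) yields
$\bigl|\int_{K^\natural}\Trace\Lambda\,\dd\nu_a\bigr| \leq |V_a|/|P_q(a)| \ll_q \dim\Lambda\cdot a/q^{a/2}$, as required. The main technical point to verify is the uniformity of the bound on $R_d$: the crucial saving of $q^{d/2}$ over the naive $q^d$ bound is precisely what the inequality ``every proper divisor of $d$ is at most $d/2$'' provides, and it is what allows the iterated application of Proposition~\ref{prop.ST1} (once for each $d\mid a$) to assemble into the correct power of $q$ in the end.
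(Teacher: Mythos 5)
Your proof is correct, and it takes a genuinely different route from the paper's. The paper introduces a single signed measure $W_a := |\G_m(\F_{q^a})|\cdot\xi_a - a|P_q(a)|\cdot\nu_a$, observes that only places of degree strictly less than $a$ contribute to it, bounds $\left|\int_{K^\natural}\Trace\Lambda\,\dd W_a\right|$ trivially by the total variation $\ll_q a\cdot q^{a/2}$, and then combines this with a single application of Proposition~\ref{prop.ST1} at level $a$. You instead decompose $V_a$ by degree and apply Proposition~\ref{prop.ST1} at every level $d\mid a$ to isolate each $h(d)$ up to a trivially-controlled residual $R_d$, then reassemble over divisors of $a$. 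Both approaches rest on the same relation \eqref{eq.rela.xi} and handle low-degree contributions by trivial pointwise estimates, but yours is an iterated comparison while the paper's is a one-shot comparison measure. As a small bonus, your local estimate $|h(d)|\ll_q\dim\Lambda\cdot q^{d/2}/d$ would, with a slightly more careful final summation (using that every proper divisor of $a$ is at most $a/2$), actually yield $|V_a|\ll_q\dim\Lambda\cdot q^{a/2}/a$ and hence the stronger bound $\ll_q\dim\Lambda/q^{a/2}$ without the extra factor of $a$; the paper's estimate of the total variation of $W_a$ is wasteful in a parallel way, so neither written argument saves this factor, and it makes no difference for the applications that follow.
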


	\begin{proof}
	Consider the measure $W_a := |\G_m(\F_{q^a})|\cdot \xi_a - a|P_q(a)|\cdot \nu_a$ on $K^\natural$ (or $[0,\pi]$); in other words, by \eqref{eq.rela.xi},
	\begin{align*}
	W_a  &= \sum_{ \substack{ v\in P_q(a) \\ \deg v <a}} \left( \deg v \cdot \delta{\big\{ \tfrac{a}{\deg v}  \angn_{\gamma}(v)\big\}} 
	- a \cdot \delta{\{\angn_{\gamma}(v)\}}\right)
	= \sum_{ \substack{  b \mid a \\ b <a}} \sum_{\substack{ v \text{ s.t.} \\\deg v= b}} \left( b \cdot \delta{\left\{\tfrac{a}{b} \angn_{\gamma}(v)\right\}} 
	- a \cdot \delta{\{\angn_{\gamma}(v)\}}\right).
	\end{align*}
	As is clear from the second expression, $W_a$ is a sum of $\sum_{ \substack{  b \mid a \\ b <a}} \pi_q(b)\cdot (b+a) $
	Dirac $\delta$ measures supported at points of $K^\natural$. 
	For any $z\in K$, the eigenvalues of $\Lambda(z)$ all have magnitude $1$, therefore we have ${\left|\Trace\Lambda(z)\right|\leq \dim\Lambda}$.
	Hence we find that
	\begin{equation}\label{eq.change.meas.inter}
	\left|\int_{K^\natural}\Trace\Lambda \dd W_a\right|
	\leq \left( \sum_{ \substack{  b \mid a \\ b <a}} \pi_q(b)\cdot (b+a) \right) \cdot \dim\Lambda
	\ll_q \dim \Lambda \cdot a \cdot q^{a/2}
	\end{equation}
	Indeed, straightforward estimates using Lemma \ref{lemm.estimates}\eqref{estim.item1} show that
	\[\sum_{ \substack{  b \mid a \\ b <a}} \pi_q(b)\cdot (b+a)
	\leq 2 a\cdot \sum_{ \substack{  b \mid a \\ b <a}} \pi_q(b)
	\ll_q a  \sum_{1\leq b\leq a/2} q^b
	\ll_q a \cdot q^{a/2}.
	\]

	Notice that 
	\[\nu_a = \frac{|\G_m(\F_{q^a})|}{a|P_q(a)|} \cdot \xi_a 
	+ \frac{1}{ a|P_q(a)|}\cdot W_a,\]
	where $a|P_q(a)|\gg_q q^a$ and 
	${|\G_m(\F_{q^a})|} \ll_q {a|P_q(a)|}$, again by Lemma \ref{lemm.estimates}. 
	Therefore, combining  \eqref{eq.ST.meas1}  in the previous Proposition 
	and inequality \eqref{eq.change.meas.inter}, we deduce that
	\begin{align*}
	\left|\int_{K^\natural}\Trace\Lambda  \dd\nu_a\right|
	&\leq \frac{|\G_m(\F_{q^a})|}{a|P_q(a)|} \cdot \left|\int_{K^\natural}\Trace\Lambda  \dd\xi_a\right| 
	+\frac{1}{ a|P_q(a)|} \left|\int_{K^\natural}\Trace\Lambda  \dd W_a\right| \\
	&\ll_q \dim \Lambda \cdot \left( q^{-a/2} + \frac{a \cdot q^{a/2}}{q^a} \right) 
	\ll_q \dim\Lambda \cdot \frac{a}{q^{a/2}}.
	\end{align*}
	This concludes the proof of the Proposition  and, by the discussion at the beginning of this subsection, that of Theorem \ref{theo.ST}.
	\ProofEnd\end{proof}

\subsection{Effectivity of the equidistribution}
\label{subsec.eff.ST}

The nontrivial irreducible representations 
of $K=\SU(2, \C)$ are exactly the symmetric powers $\symm^n(\mathrm{std})$ 
of the standard representation $\mathrm{std}:\SU(2, \C)\into \GL(2, \C)$. 
Moreover, if $\Lambda_n = \symm^n(\mathrm{std})$ for some~${n\geq 1}$, then $\Lambda_n$ has dimension $n+1$ and the trace function $\Trace\Lambda_n:K^\natural \to \R$ corresponds to the map%
\footnote{so that $\Trace\Lambda_n(\theta)=U_n(\cos\theta)$, where $U_n$ is the $n$-th Chebyshev polynomial of the second kind.} 
${\theta\mapsto  \sin\big((n+1)\theta\big)/\sin \theta}$ in the identification of $K^\natural$ with $[0, \pi]$.

It is convenient to denote by $\mom_n(a)$, the ``$n$-th moment'' of $\{\angn_\gamma(v)\}_{v\in P_q(a)}$ \ie{},
\begin{equation}\label{eq.defi.moment}
\mom_n(a):= \int_{K^\natural} \Trace \Lambda_n  \dd\nu_a 
= \frac{1}{|P_q(a)|} \cdot\sum_{v\in P_q(a)} \frac{\sin\big((n+1)\angn_{\gamma}(v)\big)}{\sin \angn_{\gamma}(v)}.
\end{equation}
With this notation, the result of Proposition \ref{prop.ST2} can be rewritten as follows. 
Given a datum $\F_q, \psi_q,\gamma\in\F_q^\times$ as in \S\ref{subsec.setting} and an integer $n\geq 1$, one has 
\begin{equation}\label{eq.coro.mom}
\forall a\geq 1, \qquad \left| \mom_n(a)\right|
\ll_q (n+1)\cdot {a}{q^{-a/2}}.
\end{equation} 

\paragraph{}
To measure ``how far'' from being perfectly equidistributed with respect to the Sato--Tate distribution are  the angles $\angn_{\gamma}(v)$,
 it is customary to introduce the \emph{star discrepancy} $\dis_{q, \gamma}(a)$:
\begin{equation}\notag
\dis_{q, \gamma}(a) :=  
\sup_{x}\left| \int_{[0, x)} \!\!\!\dd\nu_a - \int_{[0, x)}\!\!\!\dd\muST \right|
= \sup_{x}\left| \frac{\left| \{ v\in P_q(a) : \angn_{\gamma}(v)\in[0,x) \}\right|}{|P_q(a)|} - \int_{[0, x)} \!\!\!\dd\muST \right|,
\end{equation}
where the supremums are taken over $x\in[0,\pi]$. 
This definition is the direct analogue of the  star discrepancy for the uniform measure (see \cite[Chap.2, \S1]{KuNi}) in the context of $\muST $.
The interest of finding good upper bounds on $\dis_{q, \gamma}(a)$ is exemplified by the following result, which is similar to Koksma's inequality (see Theorem 5.1 in \cite[Chap.2]{KuNi}).

\begin{theo}[Niederreiter]\label{theo.koksma}
For any function  $g:[0,\pi]\to\R$ of total bounded variation $\totvar{g}$, one has
\begin{equation}\label{eq.koksma}
\left| \int_{[0,\pi]} g\dd\nu_a - \int_{[0,\pi]} g\dd\muST \right| \leq \dis_{q, \gamma}(a) \cdot \totvar{g}.
\end{equation}
\end{theo}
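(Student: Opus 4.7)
The plan is to adapt the classical proof of Koksma's inequality to our setting, where the ``reference'' measure is $\muST$ rather than Lebesgue measure. The key idea is integration by parts on Riemann--Stieltjes integrals, turning a statement about $g$ against the measures $\nu_a, \muST$ into a statement about the cumulative distribution functions against $dg$.

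Concretely, I would define the two cumulative distribution functions
\[ F_a(x) := \nu_a\bigl([0,x)\bigr), \qquad F(x) := \muST\bigl([0,x)\bigr), \qquad x\in[0,\pi]. \]
With these notations, the very definition of the star discrepancy can be rephrased as
\[ \dis_{q,\gamma}(a) \;=\; \sup_{x\in[0,\pi]} |F_a(x) - F(x)|. \]
Both $F_a$ and $F$ are nondecreasing, both vanish at $0$, and both take the value $1$ at $\pi$ (since $\nu_a$ and $\muST$ are probability measures).

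Next, I would apply Riemann--Stieltjes integration by parts to each of the two integrals on the left-hand side of \eqref{eq.koksma}. Assuming first that $g$ and $F_a$ (respectively $g$ and $F$) have no common discontinuity, one gets
\[ \int_{[0,\pi]} g \dd\nu_a \;=\; g(\pi)\cdot F_a(\pi) - g(0)\cdot F_a(0) - \int_0^\pi F_a(x) \dd g(x), \]
and the analogous identity for $\muST$ in place of $\nu_a$. Subtracting the two identities, the boundary contributions cancel thanks to $F_a(\pi)=F(\pi)=1$ and $F_a(0)=F(0)=0$, leaving
\[ \int_{[0,\pi]} g\dd\nu_a - \int_{[0,\pi]} g\dd\muST \;=\; -\int_0^\pi \bigl( F_a(x)-F(x) \bigr) \dd g(x). \]
Taking absolute values and bounding $|F_a(x)-F(x)|$ uniformly by $\dis_{q,\gamma}(a)$ then yields
\[ \left| \int_{[0,\pi]} g\dd\nu_a - \int_{[0,\pi]} g\dd\muST \right| \;\leq\; \dis_{q,\gamma}(a)\cdot \int_0^\pi |dg(x)| \;=\; \dis_{q,\gamma}(a)\cdot \totvar{g}, \]
which is precisely \eqref{eq.koksma}.

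The main obstacle is the subtlety that $\nu_a$ is purely atomic (a finite sum of Dirac masses), so $F_a$ has jumps and may share discontinuities with $g$, invalidating the naive integration by parts. I would circumvent this in the standard way: decompose $g$ as the difference $g = g_1 - g_2$ of two nondecreasing functions on $[0,\pi]$ (Jordan decomposition, with $\totvar{g} = \totvar{g_1}+\totvar{g_2}$ and $g_i$ assumed right-continuous after an inessential modification on a countable set), and then perform the argument above on each $g_i$ separately, interpreting the Stieltjes integrals using the convention $F_a(x) = \nu_a([0,x))$ (left-continuous) so that $\int_0^\pi F_a \,dg_i$ captures the atoms correctly via Abel summation. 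This is the approach of Niederreiter \cite{Niederreiter_Klo}, and it ensures the identities above hold rigorously for every $g$ of bounded variation, completing the proof.
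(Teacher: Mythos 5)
Your argument is correct and is precisely the standard proof of Koksma's inequality (Stieltjes integration by parts against the cumulative distribution functions, with the boundary terms cancelling because both measures are probability measures with no mass at the endpoints, and a Jordan-decomposition/Abel-summation device to handle the shared jump points of $g$ and the atomic $F_a$). The paper does not write out a proof at all: it simply cites Corollary~2 of Niederreiter's article and remarks that the proof there is ``an adaptation to the Sato--Tate context of the proof of Koksma's inequality for the uniform measure,'' which is exactly the route you have reconstructed.
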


This statement is essentially Corollary 2 in \cite{Niederreiter_Klo}, the proof of which is based on an adaptation to the Sato--Tate context of the proof of Koksma's inequality for the uniform measure (see \cite[p. 143]{KuNi}).

Note that, for a continuously differentiable function $g$, one has $\totvar{g} = \int_0^\pi |g'(t)|\dd t$. 
Therefore, Theorem~\ref{theo.ST.eff} follows directly from Theorem \ref{theo.koksma} and the following: 

\begin{prop} 
The star discrepancy of $\{\angn_{\gamma}(v)\}_{v\in P_q(a)}$ is bounded by
\begin{equation}\notag{} 
\dis_{q, \gamma}(a)
\ll_q \frac{a^{1/2}}{q^{a/4}}.
\end{equation}
\end{prop}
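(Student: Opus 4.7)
The plan is to reduce the discrepancy bound to the moment estimate \eqref{eq.coro.mom} via an Erd\H{o}s--Tur\'{a}n--Koksma-type inequality adapted to the Sato--Tate measure. Such an inequality is one of the main results of \cite{Niederreiter_Klo}: there exists an absolute constant $C>0$ such that, for every integer $N\geq 1$,
\[
\dis_{q,\gamma}(a) \;\leq\; \frac{C}{N+1} \;+\; C\sum_{n=1}^{N} \frac{|\mom_n(a)|}{n},
\]
where $\mom_n(a)$ is the $n$-th Chebyshev moment defined in \eqref{eq.defi.moment}. This plays the role of the classical Erd\H{o}s--Tur\'{a}n inequality, with Chebyshev polynomials of the second kind (which are, up to the Weyl denominator, the trace functions of the symmetric power representations $\Lambda_n$ of $\SU(2,\C)$) taking the place of complex exponentials.

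Granted this inequality, I would substitute the moment bound \eqref{eq.coro.mom}, namely $|\mom_n(a)| \ll_q (n+1)\,a\,q^{-a/2}$, which comes straight from Proposition \ref{prop.ST2}. This yields
\[
\dis_{q,\gamma}(a) \;\ll_q\; \frac{1}{N+1} \;+\; \frac{a}{q^{a/2}}\sum_{n=1}^{N}\frac{n+1}{n} \;\ll_q\; \frac{1}{N} \;+\; \frac{N\,a}{q^{a/2}}.
\]
The final step is to optimise the free parameter $N$: the two terms balance for $N$ of size $q^{a/4}/a^{1/2}$, and choosing $N := \lfloor q^{a/4}/a^{1/2}\rfloor$ (which is $\geq 1$ for $a$ large enough, the small-$a$ cases being handled by adjusting the implicit constant) gives
\[
\dis_{q,\gamma}(a) \;\ll_q\; \frac{a^{1/2}}{q^{a/4}},
\]
as claimed.

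The only genuine obstacle is invoking the correct form of Niederreiter's inequality: Theorem \ref{theo.koksma} is only a Koksma-type statement (it bounds an integral against the discrepancy, not the discrepancy against the moments). However, this Erd\H{o}s--Tur\'{a}n-type upper bound on the star discrepancy in terms of Chebyshev moments is precisely Theorem 1 of \cite{Niederreiter_Klo}, so no new analysis is required. Once it is quoted, the remainder is an elementary optimisation.
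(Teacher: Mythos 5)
Your proposal is correct and follows essentially the same route as the paper: invoke Niederreiter's Erd\H{o}s--Tur\'{a}n-type inequality for the Sato--Tate measure, feed in the moment bound $|\mom_n(a)|\ll_q (n+1)\,a\,q^{-a/2}$ from Proposition~\ref{prop.ST2}, and optimise over the cutoff $N\approx q^{a/4}/a^{1/2}$. The paper quotes the relevant inequality as Lemma~3 (not Theorem~1) of \cite{Niederreiter_Klo}, with weights $\frac{n+1}{n(n+2)}$ in place of your $\frac{1}{n}$ and the sum running to $2N-1$ for odd $N$, but these are cosmetic differences and the optimisation is identical.
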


	\begin{proof} 
	Niederreiter (see \cite{Niederreiter_Klo}) has proved a variant of the ``Erdös--Tur\'{a}n inequality'' in the Sato--Tate context.
	Just as the Erdös--Tur\'{a}n theorem  (see \cite[Chap.2, Thm.2.5]{KuNi}), his result gives an upper bound on $\dis_{q, \gamma}(a)$ in terms of ``exponential sums'', here the moments $\mom_n(a)$ defined in  \eqref{eq.defi.moment}. 
	Let us state Lemma~3 in \cite{Niederreiter_Klo} as follows\footnote{Niederreiter works on the interval $[-1, 1]$ endowed with the direct image of $\mu_{ST}$ under $t\mapsto\cos t$ (the ``semi-circle measure''); the translation to our setting is straightforward. Note that \cite{Niederreiter_Klo}   actually gives explicit constants in \eqref{eq.ET.ST}.}: 
	for any odd positive integer $N$, we have
	\begin{equation}\label{eq.ET.ST}
	\dis_{q, \gamma}(a) \ll \frac{1}{N} + \sum_{n=1}^{2N-1} \frac{n+1}{n(n+2)} \cdot |\mom_n(a)|,
	\end{equation}
	As was noted in \eqref{eq.coro.mom}, Proposition \ref{prop.ST2} reads: ${| \mom_n(a)|\ll_q (n+1)\cdot a q^{-a/2}}$.
	Also remark that
	\[ \sum_{n=1}^{2N-1} \frac{(n+1)^2}{n(n+2)} 
	\leq 2N - 1 +\sum_{n=1}^{\infty}\frac{1}{n^2+2n}  = 2N-1+3/4 \ll N.\]
	Hence, for all odd $N\geq 1$, \eqref{eq.ET.ST} leads to $\dis_{q, \gamma}(a) \ll_q  N^{-1} + a q^{-a/2} \cdot N$.
	Choosing $N$ to be the largest odd integer smaller than $(a^{-1}q^{a/2})^{1/2}$, we have $N^{-1}\ll a^{1/2}q^{-a/4}$ and we obtain the desired bound. 
	\ProofEnd\end{proof}

\section{Bounds on the special value}
\label{sec.bnd.spval} 

  By definition \eqref{defi.spval}, the special value $L^\ast(E_{a, \gamma}, 1)$ is the value at $T=q^{-1}$ of a polynomial with integral coefficients of degree $\leq b(E_{a, \gamma})$. Therefore, $L^\ast(E_{a, \gamma}, 1)$  is of the form 
  $n/q^{b(E_{a, \gamma})}$ for some integer $n\geq 1$, and we deduce the following ``trivial'' lower bound on $L^\ast(E_{a, \gamma}, 1)$:
  \begin{equation}\label{eq.triv.lwrbnd}
\frac{\log L^\ast(E_{a, \gamma}, 1)}{\log\left( q^{b(E_{a, \gamma})}\right)}= \frac{\log n}{\log\left( q^{b(E_{a, \gamma})}\right)} - 1 \geq - 1.
  \end{equation}

On the other hand, using techniques from classical complex analysis, Hindry and Pacheco 
show 
the following upper bound on $L^\ast(E_{a, \gamma}, 1)$  (see Theorem 7.5 in \cite{HP15}): 
\begin{equation*}
\frac{\log L^\ast(E_{a, \gamma}, 1)}{\log\left( q^{b(E_{a, \gamma})}\right)}\ll_q \frac{\log b(E_{a, \gamma})}{b(E_{a, \gamma})}.  
  \end{equation*}
 In this section, we prove the main theorem of this article (Theorem \ref{itheo.main} in the introduction), which provides a significant improvement on \eqref{eq.triv.lwrbnd}:
 
\begin{theo}\label{theo.bnd.spval}
 Let $\F_q$ be a finite field of odd characteristic $p$ and $K=\F_q(t)$.
There exist positive constants $C_1, C_2$, which depend at most on $q$ and $p$ such that the following holds. 
For all $\gamma\in\F_q^\times$ and all integers $a\geq 1$, the special value $L^\ast(E_{a, \gamma}, 1)$ satisfies: 
 \begin{equation}\label{eq.bnd.spval}
 - C_1\cdot \frac{1}{ \log b(E_{a, \gamma})} 
 \leq \frac{\log L^\ast(E_{a, \gamma}, 1)}{ \log \left(q^{b(E_{a, \gamma})}\right)}
 \leq C_2\cdot\frac{\log b(E_{a, \gamma})}{ b(E_{a, \gamma})}
 \quad (\text{as }a\to\infty).
 \end{equation}
 \end{theo}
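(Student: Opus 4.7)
The plan is to take logarithms in the explicit expression \eqref{eq.expr.spval.alt} for $L^*(E_{a,\gamma}, 1)$ and estimate the two resulting sums separately. Writing
\[
\log L^*(E_{a,\gamma}, 1) \;=\; \underbrace{\sum_{v \in P_q(a)} \log(4\, d_v)}_{=:\,S_1(a)} \;+\; \underbrace{\sum_{v \in P_q(a)} \log \sin^2 \angn_\gamma(v)}_{=:\,S_2(a)},
\]
one has $S_1(a) \geq 0$ and $S_2(a) \leq 0$. The upper bound in \eqref{eq.bnd.spval} then follows essentially from the Hindry--Pacheco argument cited above, using the elementary estimate $S_1(a) \leq |P_q(a)| \log(4a)$ together with $|P_q(a)| \ll_q q^a/a$ from Lemma \ref{lemm.estimates}\eqref{estim.item2}.

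The substantive half is the lower bound. Since $S_1(a) \geq 0$, the task reduces to controlling $|S_2(a)|$ from above. The key input is the limit \eqref{ieq.ST.ext} announced in the introduction and proved as Theorem \ref{theo.ST.ext}, namely
\[
\tfrac{1}{|P_q(a)|} \cdot S_2(a) \;\xrightarrow[a\to\infty]{}\; \tfrac{2}{\pi}\int_0^\pi \log\sin^2 t \cdot \sin^2 t \,\dd t \;=\; \log(\e/4) \;<\; 0,
\]
with a quantitative rate. Because $\log(\e/4)$ is a negative constant and $|P_q(a)| \sim_q q^a/a$ (Lemma \ref{lemm.estimates}), this gives $S_2(a) \geq -C \cdot q^a/a$ for some absolute $C > 0$. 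Recalling $b = 3(q^a - 1)$ so that $\log(q^b) = b\log q$ and $\log b \sim a\log q$, division yields $\log L^*/\log(q^b) \geq -C_1/\log b$, as required.

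The technical heart of the argument is the effective form of the displayed limit. The function $t \mapsto \log \sin^2 t$ is integrable against $\muST$ but has unbounded total variation on $[0,\pi]$, so Theorem \ref{theo.ST.eff} cannot be applied directly. The plan is to truncate: set $\epsilon_a := q^{-a c_p}$ with $c_p$ as in Corollary \ref{coro.minangle}, and replace $\log \sin^2$ by a $\mathscr{C}^1$ approximation $g_a$ that agrees with it on $[\epsilon_a, \pi - \epsilon_a]$ and is flattened on $[0, \epsilon_a]$ and $[\pi - \epsilon_a, \pi]$, with $\int_0^\pi |g_a'(t)|\,\dd t \ll a\log q$. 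By Corollary \ref{coro.minangle} every angle $\angn_\gamma(v)$ lies in $[\epsilon_a, \pi - \epsilon_a]$, so $g_a(\angn_\gamma(v)) = \log \sin^2 \angn_\gamma(v)$ for every $v \in P_q(a)$ and no sample point is perturbed by the truncation. Theorem \ref{theo.ST.eff} then bounds the equidistribution error $\bigl|\int g_a\,\dd\nu_a - \int g_a\,\dd\muST\bigr|$ by $O_q(a^{3/2}q^{-a/4})$, while a direct estimate shows $\bigl|\int g_a\,\dd\muST - \log(\e/4)\bigr| = O(\epsilon_a^2 \log(1/\epsilon_a))$; both are negligible relative to $|P_q(a)|$.

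The main obstacle is choosing the truncation $\epsilon_a$ so as to simultaneously (i) avoid perturbing any of the sample points $\angn_\gamma(v)$ --- which is precisely why the diophantine estimate of Corollary \ref{coro.minangle} is indispensable --- and (ii) keep $\int |g_a'|$ only polynomially large in $a$, so that the exponential decay rate $a^{1/2} q^{-a/4}$ from Theorem \ref{theo.ST.eff} is not swamped. Because $c_p$ depends only on $p$, the choice $\epsilon_a = q^{-a c_p}$ threads both needles and completes the argument.
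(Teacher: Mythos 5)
Your proposal is correct and follows essentially the same route as the paper: take logarithms in \eqref{eq.expr.spval.alt}, bound the positive sum $S_1(a)$ by $|P_q(a)|\log(4a)\ll_q (q^a\log a)/a$, and control the negative sum $S_2(a)$ by computing the Sato--Tate limit of $\frac{1}{|P_q(a)|}\sum\log\sin^2\angn_\gamma(v)$ via a truncated $\mathscr{C}^1$ approximation to $\log\sin^2$, with Corollary~\ref{coro.minangle} supplying the diophantine lower bound on the angles that makes the truncation invisible to the sample points and Theorem~\ref{theo.ST.eff} supplying the effective equidistribution. The only structural difference is cosmetic: the paper isolates the Sato--Tate limit as a standalone result (Theorem~\ref{theo.ST.ext}, proved via the auxiliary functions $w$, $\phi_\epsilon$, $w_\epsilon$ and Lemma~\ref{lemm.anal.estim}), whereas you fold that argument directly into the proof; also the paper takes $\epsilon=q^{-a\gamma}$ with $\gamma>\max\{c_p,1/4\}$ strictly rather than $\epsilon_a=q^{-ac_p}$, but since the cutoff agrees with $\log\sin^2$ on the closed interval $[\epsilon,\pi-\epsilon]$ and the angles lie there by Corollary~\ref{coro.minangle}, your choice works equally well.
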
 

We will prove this Theorem in \S\ref{subsec.proof.bnd.spval}. 
The upper bound in \eqref{eq.bnd.spval} does not radically improve on the upper bound of Hindry and Pacheco (\emph{loc. cit.}) 
but, since our proof is rather short and elementary, 
we decided to include it here for completeness.
Our proof of the lower bound in \eqref{eq.bnd.spval}, on the other hand, is much more involved: the crucial step is the computation of a  ``Sato--Tate limit'', using the results of sections \ref{sec.small.angles} and \ref{sec.distrib} (see the next subsection). 
For later use (in section \ref{sec.BS}), we note that Theorem \ref{theo.bnd.spval} implies that
\begin{equation}\notag
\left|\log L^\ast(E_{a, \gamma}, 1)\right| = o\big(b(E_{a, \gamma})\big)  
\qquad (\text{as } a\to\infty).
\end{equation}

\subsection{Evaluation of a Sato--Tate limit}
\label{subsec.ST.limit}
 
In this subsection, we show the following result, which is the crucial input in our proof of the lower bound in Theorem \ref{theo.bnd.spval}.
For any integer $a\geq 1$, we again denote by $\nu_a=\nu(\F_q, \psi_q, \gamma;a)$ the probability measure on~$[0,\pi]$ introduced in \S\ref{subsec.setting}.

\begin{theo}\label{theo.ST.ext} 
Let $\F_q$ be a finite field equipped with a nontrivial additive character $\psi_q$, and $\gamma\in\F_q^\times$.
Then 
\begin{equation}\label{eq.ST.ext}
\int_{[0,\pi]}\log\sin^2 \dd\nu_a  \xrightarrow[a\to\infty]{} \int_{[0,\pi]}{\log\sin^2}\dd\muST. 
\end{equation}
\end{theo}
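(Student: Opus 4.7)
The main obstacle is that $t\mapsto\log\sin^2 t$ is unbounded on $[0,\pi]$ (it blows up at the endpoints), so neither Theorem~\ref{theo.ST} nor the effective version Theorem~\ref{theo.ST.eff} can be applied directly to this test function. The plan is to approximate $\log\sin^2$ by a truncated function that is constant near the endpoints, with the cutoff parameter chosen to match the scale at which the angles $\angn_\gamma(v)$ are provably bounded away from~$\{0,\pi\}$.

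Concretely, for any $\epsilon\in(0,\pi/2)$, let $g_\epsilon:[0,\pi]\to\R$ be the continuous function that coincides with $\log\sin^2 t$ on $[\epsilon,\pi-\epsilon]$ and equals $\log\sin^2\epsilon$ on $[0,\epsilon]\cup[\pi-\epsilon,\pi]$. Writing
\[
\int\log\sin^2\dd\nu_a - \int\log\sin^2\dd\muST
= \int(\log\sin^2 - g_\epsilon)\dd\nu_a
+ \int g_\epsilon\,(\dd\nu_a - \dd\muST)
+ \int(g_\epsilon - \log\sin^2)\dd\muST,
\]
I would now specialise to $\epsilon_a := (q^a)^{-c_p}$, where $c_p$ is the constant from Corollary~\ref{coro.minangle}. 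By that corollary, the support of $\nu_a$ is contained in $[\epsilon_a,\pi-\epsilon_a]$, so $\log\sin^2$ and $g_{\epsilon_a}$ agree on $\mathrm{supp}(\nu_a)$ and the first term vanishes identically.

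For the third term, a direct estimate using $\sin^2 t \le t^2$ near $t=0$ gives
\[
\left|\int(g_{\epsilon_a}-\log\sin^2)\dd\muST\right|
\ll \int_0^{\epsilon_a} |\log\sin^2 t|\cdot\sin^2 t\dd t
\ll \epsilon_a^3\,|\log\epsilon_a|,
\]
which tends to $0$ (very rapidly) as $a\to\infty$. For the middle term, I would invoke Theorem~\ref{theo.koksma}: although $g_{\epsilon_a}$ is not $C^1$, it has finite total variation $\totvar{g_{\epsilon_a}} = 2|\log\sin^2\epsilon_a| \ll a$, so
\[
\left|\int g_{\epsilon_a}\,(\dd\nu_a-\dd\muST)\right|
\le \dis_{q,\gamma}(a)\cdot\totvar{g_{\epsilon_a}}
\ll_q \frac{a^{1/2}}{q^{a/4}}\cdot a
= \frac{a^{3/2}}{q^{a/4}} \xrightarrow[a\to\infty]{} 0.
\]

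Combining the three estimates proves \eqref{eq.ST.ext}. The core observation, and the reason this works, is precisely the matching of two scales: the diophantine lower bound $(q^a)^{-c_p}$ from Theorem~\ref{theo.minangle} on the distance of $\angn_\gamma(v)$ to $\{0,\pi\}$ makes the first (and \emph{a priori} problematic) tail term vanish, while the total variation of the truncation grows only like $a$, which is absorbed by the exponential decay $q^{-a/4}$ in the star-discrepancy bound. The genuinely delicate step—and the only place where all the work of sections~\ref{sec.small.angles} and~\ref{sec.distrib} is used simultaneously—is the verification that these two scales are indeed compatible, i.e.\ that $a\cdot a^{1/2}q^{-a/4}\to 0$; everything else is routine truncation.
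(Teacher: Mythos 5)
Your proof is correct and follows essentially the same three-term decomposition as the paper, with the identical crucial observation (matching the diophantine lower bound $(q^a)^{-c_p}$ from Corollary~\ref{coro.minangle} on the support of $\nu_a$ against the discrepancy bound $a^{1/2}q^{-a/4}$, so that the exponential decay absorbs the logarithmically growing total variation). The only minor difference is the truncation: you extend $\log\sin^2$ by a constant near the endpoints and invoke Theorem~\ref{theo.koksma} directly for functions of bounded variation, whereas the paper multiplies by a smooth cutoff $\phi_\epsilon$ to produce a $C^1$ function $w_\epsilon$ (vanishing near the endpoints) and applies Theorem~\ref{theo.ST.eff}; your variant is slightly more economical since it bypasses the $\int|w_\epsilon'|$ estimate of Lemma~\ref{lemm.anal.estim}.
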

More concretely, this statement means that
\[\frac{1}{|P_q(a)|} \sum_{v\in P_q(a)} \log\left(\sin^2 \angn_{\gamma}(v) \right)
\xrightarrow[a\to\infty]{}
\frac{2}{\pi} \int_0^\pi \log(\sin^2t )\cdot \sin^2 t  \dd t = \log\frac{e}{4},\]
the evaluation of the integral on the right-hand side being a routine exercise in calculus.

	\begin{proof} 
	For conciseness, we denote by $w:[0,\pi]\to\R$ the function given by $w(t) := -\log(\sin^2 t)$ if $t\neq 0, \pi$ and $w(0)=w(\pi):=0$.
	Choose a  nondecreasing continuously differentiable function  $\phi_0:[0,1]\to\R$ such that $\phi_0 \equiv 0$ on $[0,1/3]$ and $\phi_0\equiv 1$ on $[2/3, 1]$.
	For all $\epsilon\in(0, 1)$, we define a function $\phi_\epsilon:[0,\pi]\to\R$ by
	\[ \phi_\epsilon(t) =
	\begin{cases}
	\phi_0(t/\epsilon) & \text{ if } t\in[0,\epsilon], \\
	1& \text{ if } t\in[\epsilon, \pi-\epsilon], \\
	\phi_0((\pi-t)/\epsilon) & \text{ if } t\in[\pi-\epsilon, \pi],
	\end{cases} 
	\]
	and we let $w_\epsilon := w\cdot \phi_\epsilon$.  By construction, $w_\epsilon$ is a continuously differentiable function on $[0,\pi]$ such that $w_\epsilon\leq w$ on $[0,\pi]$, $w\equiv w_\epsilon$ on $[\epsilon, \pi-\epsilon]$, and $w_\epsilon \equiv 0$ on $[0, \epsilon/3]\cup [\pi-\epsilon/3, \pi]$. 
	Furthermore, we have the following analytic estimates:

	\begin{lemm}\label{lemm.anal.estim}
	 Notations being as above, for all $\epsilon\in (0, 1)$, we have
	\begin{multicols}{2}
	\begin{enumerate}[(i)]
	\item\label{anal.item.1} $\displaystyle\int_0^\pi |w'_\epsilon(t)|\dd t \ll |\log\epsilon|$,
	\item\label{anal.item.2} $\displaystyle\int_0^\pi (w(t)-w_\epsilon(t))\cdot\sin^2(t) \cdot \dd t \ll \epsilon |\log\epsilon|.$
	\end{enumerate}
	\end{multicols}
	The constants depend only on the choice of $\phi_0$. 
	\end{lemm}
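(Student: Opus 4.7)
The plan is to treat each estimate separately by exploiting the explicit form of $w_\epsilon = w\cdot\phi_\epsilon$ and the standard behaviour $w(t) = -2\log\sin t$ near $t=0$ and $t=\pi$. Throughout, we use that $w$ is nonnegative on $[0,\pi]$ and differentiable on $(0,\pi)$ with $w'(t) = -2\cot t$; also that $w$ is monotonically decreasing on $(0,\pi/2]$ and increasing on $[\pi/2,\pi)$, with $w(\pi/2)=0$ and $w(\epsilon) = -2\log\sin\epsilon \ll |\log\epsilon|$ as $\epsilon\to 0$. In what follows the implicit constants will depend only on $\|\phi_0\|_\infty$ and $\|\phi_0'\|_\infty$.

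To prove \eqref{anal.item.1}, I would split $[0,\pi]$ into the three intervals $[0,\epsilon]$, $[\epsilon,\pi-\epsilon]$ and $[\pi-\epsilon,\pi]$. On the middle interval, $\phi_\epsilon\equiv 1$ so $w_\epsilon = w$; by monotonicity of $w$ on each side of $\pi/2$, we obtain
\[
\int_\epsilon^{\pi-\epsilon}|w'(t)|\,\dd t = 2\big(w(\epsilon) - w(\pi/2)\big) + 2\big(w(\pi-\epsilon) - w(\pi/2)\big) \ll |\log\epsilon|.
\]
On $[0,\epsilon]$, Leibniz gives $w_\epsilon'(t) = w'(t)\phi_0(t/\epsilon) + w(t)\phi_0'(t/\epsilon)/\epsilon$. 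The first summand is supported on $[\epsilon/3,\epsilon]$ (since $\phi_0\equiv 0$ on $[0,1/3]$) so its $L^1$-contribution is at most $\int_{\epsilon/3}^\epsilon 2\cot t\,\dd t = 2\log\frac{\sin\epsilon}{\sin(\epsilon/3)} = O(1)$. The second summand is supported on $[\epsilon/3,2\epsilon/3]$ where $|w(t)|\ll|\log\epsilon|$ and $|\phi_0'(t/\epsilon)|/\epsilon \ll 1/\epsilon$, giving a contribution $\ll |\log\epsilon|$. The integral over $[\pi-\epsilon,\pi]$ is handled symmetrically, yielding \eqref{anal.item.1}.

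For \eqref{anal.item.2}, the key observation is that $w-w_\epsilon = w\cdot(1-\phi_\epsilon)$ vanishes outside $[0,2\epsilon/3]\cup[\pi-2\epsilon/3,\pi]$, and satisfies $0\leq w-w_\epsilon\leq w$ everywhere. Using $\sin^2 t\leq t^2$ and the standard bound $w(t)\ll|\log t|$ for $t\in(0,\pi/2]$, one reduces the problem to estimating
\[
\int_0^\epsilon |\log t|\cdot t^2\,\dd t \ll \epsilon^3|\log\epsilon|,
\]
which is comfortably smaller than the claimed bound $\epsilon|\log\epsilon|$; the same argument applies near $\pi$. This yields \eqref{anal.item.2}.

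No step here is a serious obstacle: the entire proof is a routine exercise in one-variable calculus. The only mildly subtle point is verifying that the ``dangerous'' term $w(t)\phi_0'(t/\epsilon)/\epsilon$ in \eqref{anal.item.1} does not blow up, which is handled by noticing that the $1/\epsilon$ factor is exactly compensated by the length $\sim\epsilon$ of the region where $\phi_0'(t/\epsilon)$ is nonzero, leaving only the $|w|\ll|\log\epsilon|$ factor to be absorbed.
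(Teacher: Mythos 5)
Your proof is correct and follows essentially the same strategy as the paper: exploit the Leibniz rule for $w_\epsilon = w\phi_\epsilon$, use the support properties of $\phi_0$ and $\phi_0'$ to localise the ``dangerous'' term $w(t)\phi_0'(t/\epsilon)/\epsilon$ on an interval of length $\sim\epsilon$ where $|w|\ll|\log\epsilon|$, and then integrate. A couple of minor remarks. For part (i), your observation that the term $w'(t)\phi_0(t/\epsilon)$ is supported on $[\epsilon/3,\epsilon]$ and contributes only $O(1)$ is slightly sharper than the paper, which bounds the corresponding term by $3\pi/\epsilon$ pointwise and integrates to get $O(1)$; both work. For part (ii), you keep the factor $\sin^2 t\leq t^2$ whereas the paper crudely bounds $\sin^2 t\leq 1$; this buys you the stronger estimate $\epsilon^3|\log\epsilon|$, which is fine (and even preferable) since any bound $\ll\epsilon|\log\epsilon|$ suffices. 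Two very small slips worth noting: the display $\int_\epsilon^{\pi-\epsilon}|w'| = 2(w(\epsilon)-w(\pi/2)) + 2(w(\pi-\epsilon)-w(\pi/2))$ has spurious factors of $2$ (the actual value is $w(\epsilon)+w(\pi-\epsilon)$, since $w(\pi/2)=0$), though this is harmless for the order of magnitude; and the bound $w(t)\ll|\log t|$ is false at $t=1$ (where $|\log t|=0$ but $w(1)>0$) — the precise bound is $w(t)\leq 2\log(\pi/(2t))$ as in the paper — but you only use it for $t\leq\epsilon$ where it is fine.
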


	We postpone the proof of this Lemma until the end of the subsection, and we now prove that 
	$\int_{[0, \pi]} w \dd\nu_a$ converges to  $\int_{[0, \pi]} w\dd \muST $ when $a\to\infty$. 
	For any $\epsilon\in (0,1)$, note that 
	\begin{align*}
	\left|\int  w \dd \nu_a - \int  w \dd \muST  \right|
	&\leq \underbrace{\int |w-w_\epsilon| \dd \nu_a}_{:=T_1}
	+  \underbrace{\left|\int  w_\epsilon \dd \nu_a  - \int w_\epsilon \dd \muST  \right|}_{:=T_2}
	+  \underbrace{ \int | w_\epsilon - w| \dd \muST }_{:=T_3}.
	\end{align*}
	Let us bound each of these three terms using the results in sections \ref{sec.small.angles} and \ref{sec.distrib}. 

	For $\epsilon>0$ sufficiently small, the first term $T_1$ vanishes. 
	Indeed, $w\equiv w_\epsilon$ on $[\epsilon, \pi-\epsilon]$ and, as we have proved,  the support of $\nu_a$ is contained in $[(q^a)^{-c_p}, \pi - (q^a)^{-c_p}]$ (see Remark \ref{rema.minangle.suppmeas}). 
	Therefore, for any $\epsilon < {(q^a)}^{-c_p} $, we have $T_1=0$.

	The function $w_\epsilon$ being continuously differentiable on $[0,\pi]$, we can use our effective equidistribution result (Theorem~\ref{theo.ST.eff}) to control the second term $T_2$. 
	Precisely, Theorem \ref{theo.ST.eff} yields
	\begin{equation*}
	T_2 = \left|\int  w_\epsilon \dd \nu_a  - \int w_\epsilon \dd \muST  \right|
	\ll_q \frac{a^{1/2}}{q^{a/4}} \cdot \int_{0}^\pi |w'_\epsilon(t)|\dd t 
	\ll_q \frac{a^{1/2}}{q^{a/4}}\cdot |\log\epsilon|, 
	\end{equation*}
	where the rightmost upper bound is Lemma \ref{lemm.anal.estim}\eqref{anal.item.1}.
	Finally, Lemma \ref{lemm.anal.estim}\eqref{anal.item.2} proves that $T_3 \ll \epsilon |\log\epsilon|$. 

	In summary, 
	for all $\epsilon>0$ such that $\epsilon < {(q^a)}^{-c_p}$, we have
	\[ \left|\int_{[0,\pi]}  w \dd \nu_a - \int_{[0,\pi]}  w \dd \muST  \right| 
	\ll_q \frac{a^{1/2}}{q^{a/4}}\cdot |\log \epsilon| + \epsilon|\log\epsilon|. \]
	Upon choosing $\epsilon$ of the form $\epsilon= (q^{a})^{ -\gamma}$ for some $\gamma>\max\{c_p, 4^{-1}\}$ (whose choice can be made to depend at most on $p$), 
	we conclude that
	\[\left|\int_{[0,\pi]}  w \dd \nu_a - \int_{[0,\pi]}  w \dd \muST  \right| 
	\ll_q \gamma a  \cdot \frac{   a^{1/2}}{q^{a/4}} + \frac{\gamma}{q^{a\cdot\gamma}}
	\ll_{q, p} \frac{a^{3/2}}{q^{a/4}},
	\]
	which proves Theorem \ref{theo.ST.ext}. 
	We even obtain a more quantitative version of \eqref{eq.ST.ext}: 
	\[\frac{1}{|P_q(a)|} \sum_{v\in P_q(a)} \log\left(\sin^2 \angn_{\gamma}(v)\right)
	= \log({\e}/{4}) + O\left(\frac{a^{3/2}}{q^{a/4}}\right) \quad (\text{as } a\to\infty), \]
	where the implicit constant depends at most on $q, p$ and the choice of the auxiliary function $\phi_0$.
	\ProofEnd \end{proof}

	\begin{proof}[of Lemma \ref{lemm.anal.estim}]
	Since both $w$ and $w_\epsilon$ are symmetric around $\pi/2$, it is sufficient to prove \eqref{anal.item.1} and \eqref{anal.item.2} where the integrals are replaced by integrals over $[0, \pi/2]$. 
	We also note that, for all $t\in(0, \pi/2]$, one has ${0\leq w(t)\leq 2\log\frac{\pi}{2t} }$. This follows from the classical estimate:  $\sin t\geq \frac{2t}{\pi}$ for $t\in [0, \pi/2]$.
	 
	To prove \eqref{anal.item.1}, we study separately the integrals over $(0,\epsilon)$ and over $[\epsilon, \pi/2]$.
	Since $w$ and $w_\epsilon$ coincide on $[\epsilon, \pi/2]$, we have 
	\[\forall t\in[\epsilon, \pi/2], \quad
	|w'_\epsilon(t)| = |w'(t)|  =2\left|\frac{\cos t}{\sin t}\right| \leq \frac{2}{|\sin t|} \leq \frac{\pi}{t},\]
	by the classical estimate mentioned above. 
	Hence we have
	$ \int_\epsilon^{\pi/2} |w'_\epsilon(t)|\dd t 
	\leq \pi \int_\epsilon^{\pi/2}t^{-1}\dd t
	\ll |\log\epsilon|$.
	On the interval $(0,\epsilon)$, we use the fact that $w_\epsilon = w\phi_\epsilon$ to deduce that
	\[\forall t\in(0,\epsilon), \quad |w'_\epsilon(t)| 
	\leq |w'(t)|\cdot \phi_\epsilon(t) + |w(t)| \cdot |\phi'_\epsilon(t)|
	\leq \frac{\pi}{t}\cdot \phi_\epsilon(t) + |w(t)| \cdot \frac{\|\phi'_0\|_\infty}{\epsilon}, 
	\]
	where $\|\phi_0'\|_\infty$ denotes the supnorm of $\phi'_0$ on $[0,\pi]$. 
	By the upper bound on $w(t)$ at the beginning of the proof and the fact that $\phi_\epsilon\equiv 0$ on $[0,\epsilon/3)$, we deduce that 
	\[\forall t\in(0,\epsilon), \quad |w'_\epsilon(t)| 
	\leq \frac{3\pi}{\epsilon} + \frac{2\|\phi'_0\|_\infty}{\epsilon} \cdot \log \frac{\pi}{2t},\]
	and from there, that 
	\[ \int_{0}^\epsilon |w'_\epsilon(t)|\dd t 
	\leq   3\pi  + \frac{2\|\phi'_0\|_\infty}{\epsilon} \cdot  \int_0^\epsilon \log \frac{\pi}{2t} \dd t
	 \ll 1 + \frac{\|\phi'_0\|_\infty}{\epsilon} \cdot \epsilon|\log \epsilon| 
	 \ll |\log \epsilon|.\]
	Summing the contributions of $\int_0^\epsilon$ and $\int_\epsilon^{\pi/2}$, we conclude that \eqref{anal.item.1} holds, with a constant depending only on~$\phi_0$.  We now show that \eqref{anal.item.2} holds: by the symmetry of $w-w_\epsilon$  and 
	since $w\equiv w_\epsilon$ on $[\epsilon, \pi/2]$, it suffices to prove that $\int_0^\epsilon |w(t)-w_\epsilon(t)|\sin^2t\dd t \ll \epsilon|\log\epsilon|$. By construction of $\phi_\epsilon$, we notice that 
	\[\int_0^\epsilon |w(t)-w_\epsilon(t)|\cdot\sin^2t \dd t 
	\leq\int_0^\epsilon  w(t) \cdot\sin^2t \dd t  
	\leq\int_0^\epsilon  w(t)   \dd t 
	\stackrel{(*)}{\leq} 2 \int_0^\epsilon \log \frac{\pi}{2t} \dd t
	\ll \epsilon |\log \epsilon|,
	\]
	where inequality $(*)$ follows from the upper bound on $w(t)$ at the beginning of the proof. 
	\ProofEnd \end{proof}

\subsection{Proof of Theorem \ref{theo.bnd.spval}}
\label{subsec.proof.bnd.spval}

 In \eqref{eq.expr.spval} 
 and \eqref{eq.expr.spval.alt}, we have proved that 
 \[ L^\ast(E_{a, \gamma}, 1) = \prod_{v\in P_q(a)}4\deg v \cdot \prod_{v\in P_q(a)} \left(1 -  \frac{\Kln_{\gamma}(v)^2}{4q^{\deg v}}\right) =   \prod_{v\in P_q(a)}4\deg v \cdot \prod_{v\in P_q(a)}  \sin^2\big(\angn_\gamma(v)\big).\]
Therefore, we have
\begin{equation}\label{eq.bnd.spval.inter1}
\frac{\log L^\ast(E_{a, \gamma}, 1) }{\log \big(q^{b(E_{a, \gamma})}\big)}
= \frac{1}{\log \big(q^{b(E_{a, \gamma})}\big)}\cdot\sum_{v\in P_q(a)} \log(4\deg v) + \frac{1}{\log \big(q^{b(E_{a, \gamma})}\big)} \sum_{v\in P_q(a)}  \log\big(\sin^2 \angn_{\gamma}(v)\big),
\end{equation}
and we estimate the two terms separately. First of all, let us prove that 
 \begin{equation}\label{eq.spval.upprbnd}
0\leq  \frac{1}{\log \big(q^{b(E_{a, \gamma})}\big)}\cdot\sum_{v\in P_q(a)} \log(4\deg v) 
 \ll_q \frac{\log a}{ a},  \qquad (\text{as }a\to\infty).
 \end{equation}
 The lower bound is clear, and the upper bound follows from Lemma \ref{lemm.estimates}\eqref{estim.item2}: indeed, we have $b(E_{a, \gamma}) \gg q^a$ (see \eqref{eq.degL}) and thus
  \[\sum_{v\in P_q(a)} \log(4\deg v) 
 \leq \log (4a) \cdot |P_q(a)|
 \ll_q  \frac{\log a \cdot q^a}{a},\]
The upper bound in \eqref{eq.spval.upprbnd} implies the upper bound in Theorem \ref{theo.bnd.spval} since the second term in \eqref{eq.bnd.spval.inter1}  is negative. To conclude the proof, it thus remains to prove that 
\[\left|\frac{1}{\log \big(q^{b(E_{a, \gamma})}\big)} \sum_{v\in P_q(a)}  \log\big(\sin^2 \angn_{\gamma}(v)\big)\right| \ll_{q, p} \frac{1}{\log b(E_{a, \gamma})}.\]
Let us write that
\[\left|\frac{1}{\log \big(q^{b(E_{a, \gamma})}\big)} \sum_{v\in P_q(a)}  \log\big(\sin^2 \angn_{\gamma}(v) \big)\right| 
\leq  \frac{|P_q(a)|}{\log \big(q^{b(E_{a, \gamma})}\big)}  \cdot \left|\frac{1}{|P_q(a)|} \sum_{v\in P_q(a)}  \log\big(\sin^2 \angn_{\gamma}(v) \big)\right|.
\]
By \eqref{eq.degL} again, we have $b(E_{a, \gamma})\gg q^a$, 
and Lemma \ref{lemm.estimates}\eqref{estim.item2} yields that $|P_q(a)|\ll_q q^a/a$. Moreover, by Theorem \ref{theo.ST.ext}, we have
\[\frac{1}{|P_q(a)|} \sum_{v\in P_q(a)}  \log\big(\sin^2 \angn_{\gamma}(v) \big) = \int_{[0,\pi]} \log\sin^2\dd\nu_a = \log(\e/4) + o(1)\qquad (\text{as } a\to\infty). \]
Putting these together, we obtain that 
\[\left|\frac{1}{\log \big(q^{b(E_{a, \gamma})}\big)} \sum_{v\in P_q(a)}  \log\big(\sin^2 \angn_{\gamma}(v) \big)\right| 
\ll_{q, p} \frac{1}{a} \ll_{q, p} \frac{1}{\log b(E_{a, \gamma})}.\]
This concludes the proof of Theorem \ref{theo.bnd.spval}.
\ProofEnd    
 
\section{Application to an analogue of the Brauer--Siegel theorem}
\label{sec.BS}
 
 In this section, we deduce from Theorem \ref{theo.bnd.spval} and from the BSD conjecture that the following theorem holds (stated as Theorem \ref{itheo.BS} in the introduction).
 
\begin{theo}\label{theo.BS}
Let $\F_q$ be a finite field of odd characteristic and $K:=\F_q(t)$.
For all $\gamma\in\F_q^\times$ and all integers $a\geq 1$, consider the elliptic curve $E_{a, \gamma}/K$ as above. 
Then the Tate--Shafarevich group $\sha(E_{a, \gamma})$ is finite and, as $a\to\infty$,
\begin{equation}\label{eq.BS}
\log\big(|\sha(E_{a, \gamma})|\cdot\Reg(E_{a, \gamma})\big) \sim \log H(E_{a, \gamma}). 
\end{equation}
\end{theo}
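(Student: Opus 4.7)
The finiteness of $\sha(E_{a,\gamma})$ is already granted by Theorem \ref{theo.BSD} (Pries--Ulmer), so only the asymptotic equivalence in \eqref{eq.BS} needs to be established. The plan is to read off \eqref{eq.BS} from the BSD formula after controlling each auxiliary factor against the height $H(E_{a,\gamma})$.

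Concretely, I will take logarithms of the BSD identity \eqref{eq.BSD} and solve for the quantity of interest:
\begin{equation*}
\log\big(|\sha(E_{a,\gamma})|\cdot\Reg(E_{a,\gamma})\big)
= \log H(E_{a,\gamma}) + \log L^\ast(E_{a,\gamma},1) - \log \tam(E_{a,\gamma}) + 2\log|E_{a,\gamma}(K)\tors| - \log q.
\end{equation*}
Dividing both sides by $\log H(E_{a,\gamma})$, the statement $\log(|\sha|\cdot\Reg)\sim \log H$ amounts to showing that each of the four correction terms on the right-hand side is $o(\log H(E_{a,\gamma}))$ as $a\to\infty$.

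First I would recall from \S\ref{subsec.invariants} that $\log H(E_{a, \gamma}) = q^a \log q$ and $b(E_{a,\gamma})=3(q^a-1)$, so $\log(q^{b(E_{a,\gamma})})$ and $\log H(E_{a,\gamma})$ are of the same order (they differ by a bounded multiplicative constant). Combining this with Theorem \ref{theo.bnd.spval}, the lower bound gives
\[
|\log L^\ast(E_{a,\gamma}, 1)| \ll_q \frac{\log(q^{b(E_{a,\gamma})})}{\log b(E_{a,\gamma})} \ll_q \frac{q^a}{a},
\]
so $\log L^\ast(E_{a,\gamma},1) = o(\log H(E_{a,\gamma}))$. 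Next, Proposition \ref{prop.dioph.bnd}\textit{(ii)} yields $\log \tam(E_{a,\gamma}) = o(\log H(E_{a,\gamma}))$, while Proposition \ref{prop.dioph.bnd}\textit{(i)} gives $\log|E_{a,\gamma}(K)\tors| \ll_q 1$, which is $o(\log H(E_{a,\gamma}))$ since $\log H(E_{a,\gamma}) \to \infty$; finally, $\log q$ is a constant, hence also negligible.

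Summing these four negligible contributions and dividing through by $\log H(E_{a,\gamma})$ yields
\[
\frac{\log\big(|\sha(E_{a,\gamma})|\cdot\Reg(E_{a,\gamma})\big)}{\log H(E_{a,\gamma})} = 1+o(1),
\]
which is exactly \eqref{eq.BS}. There is no real obstacle here: all the work has been done in Sections \ref{sec.family}--\ref{sec.bnd.spval}. The only step requiring genuine input from this paper (as opposed to Pries--Ulmer's BSD result and the general diophantine bounds of Proposition \ref{prop.dioph.bnd}) is the estimate on $\log L^\ast(E_{a,\gamma}, 1)$, which is supplied by Theorem \ref{theo.bnd.spval}; this is what makes \eqref{eq.BS} a genuine analogue of the Brauer--Siegel theorem rather than just a formal consequence of BSD.
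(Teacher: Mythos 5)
Your proposal is correct and mirrors the paper's proof exactly: both arguments take logarithms of the BSD formula \eqref{eq.BSD}, use Proposition \ref{prop.dioph.bnd} to dispose of the Tamagawa and torsion terms, observe that $\log H(E_{a,\gamma})$ and $b(E_{a,\gamma})$ are of the same order, and invoke Theorem \ref{theo.bnd.spval} to show $|\log L^\ast(E_{a,\gamma},1)| = o(\log H(E_{a,\gamma}))$. The only minor point: when you bound $|\log L^\ast|$ you cite only the lower bound of Theorem \ref{theo.bnd.spval}, whereas controlling the absolute value requires both bounds; this is harmless here since the upper bound is the smaller of the two, but it is cleaner to say so.
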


Alternatively, \eqref{eq.BS} can be rewritten under the form
\begin{equation}\label{eq.BS.alternative}
\forall \epsilon>0, \quad
 H(E_{a, \gamma})^{1-\epsilon} \ll_{q, \epsilon}
 |\sha(E_{a, \gamma})|\cdot\Reg(E_{a, \gamma}) 
 \ll_{q, \epsilon}  H(E_{a, \gamma})^{1+\epsilon} \qquad(\text{as }a\to\infty).
\end{equation}
The upper bound in \eqref{eq.BS.alternative} was essentially conjectured by Lang for elliptic curves over $\Q$ with finite Tate--Shafarevich groups\footnote{Note though that Lang uses a different normalisation of the height: his (naïve) height has an exponent $1/12$ instead of our exponent~$1$.} (see \cite[Conj.1]{Lang_conjecturesEC}). 
Theorem \ref{theo.BS} thus provides 
an unconditional example where this conjecture holds for elliptic curves over $K=\F_q(t)$. 
The lower bound in \eqref{eq.BS.alternative} further proves that the exponent $1$ of the height is optimal, in the sense that $1$ cannot be replaced by any smaller number. 
 
 
 \paragraph{}
One may also view Theorem \ref{theo.BS} as an analogue of the Brauer--Siegel theorem for the elliptic curves $E_{a, \gamma}$.   The Brauer--Siegel theorem states that, as $F$ runs through a sequence of number fields of given degree $n$ over $\Q$ and whose discriminants $\Delta_F$ tend, in absolute value, to $+\infty$, 
one has
\begin{equation} \label{eq.BS.cla}
 \forall \epsilon>0, \quad
 \Delta_F^{1/2-\epsilon} \ll_{n, \epsilon}
 |Cl(F)|\cdot R(F)
 \ll_{n, \epsilon}   \Delta_F^{1/2+\epsilon} \qquad(\text{as } |\Delta_F|\to\infty),
\end{equation}
where $Cl(F)$ denotes the class group of $F$ and $R(F)$ its regulator of units. 
At least formally, \eqref{eq.BS.alternative} is very similar to \eqref{eq.BS.cla}. A more detailed analogy is explained in \cite{Hindry_MW} and \cite{HP15}.
 
	\begin{proof}
	We know from Theorem \ref{theo.BSD} that the BSD conjecture holds for $E_{a, \gamma}$. 
	In particular, the Tate--Shafarevich group $\sha(E_{a, \gamma})$ is indeed finite and the special value $L^\ast(E_{a, \gamma}, 1)$ satisfies  \eqref{eq.BSD}. 
	The BSD formula \eqref{eq.BSD} and Proposition \ref{prop.dioph.bnd} then imply the estimate: 
	\begin{equation}\notag{}
	\frac{\log\big(|\sha(E_{a, \gamma})|\cdot \Reg(E_{a, \gamma})\big)}{\log H(E_{a, \gamma})} = 1 + \frac{\log L^\ast(E_{a, \gamma}, 1)}{\log H(E_{a, \gamma})} + o(1) 
	\qquad (\text{as }a\to\infty).
	\end{equation} 
	Therefore, to conclude the proof of Theorem \ref{theo.BS}, it remains to prove that $|\log L^\ast(E_{a, \gamma}, 1) | = o\big( \log H(E_{a, \gamma})\big)$ or, alternatively,  that 
	\[ |\log L^\ast(E_{a, \gamma}, 1) | = o\big( b(E_{a, \gamma})\big) 
	\qquad (\text{as }a\to\infty)\]
	because $\log H(E_{a, \gamma})$ and $b(E_{a, \gamma})$ have the same order of magnitude as $a\to\infty$ (see \S\ref{subsec.invariants}). 
	But we have already showed in Theorem \ref{theo.bnd.spval} that this asymptotic estimate holds.
	\ProofEnd\end{proof}


\noindent\hfill\rule{7cm}{0.5pt}\hfill\phantom{.}

\paragraph{Acknowledgements} 
 It is a pleasure to thank Marc Hindry and Douglas Ulmer for their encouragements and for their comments on earlier versions of this work.
 The author would also like to thank
 Bruno Anglès, 
 Peter Bruin and 
 Peter Koymans for fruitful discussions about various parts of the paper. 
This work has been carried out at Universiteit Leiden whose financial support and perfect working conditions are gratefully acknowledged.
The author is also partially supported by the ANR Grant ANR-17-CE40-0012 (FLAIR).

\newcommand{\mapolicebackref}[1]{\hspace*{-2pt}{\textcolor{orange}{\footnotesize $\uparrow$ #1}}}
\renewcommand*{\backref}[1]{\mapolicebackref{#1}}
\hypersetup{linkcolor=orange!80}

\small
\bibliographystyle{alpha}
\bibliography{../../Biblio_GENERAL.bib} 

\normalsize

\noindent\rule{7cm}{0.5pt}

\smallskip
 
{\sc Mathematisch Instituut, Universiteit Leiden,} PO Box 9512, 2300 RA Leiden, The Netherlands.
 
{\it E-mail:} \href{mailto:r.m.m.griffon@math.leidenuniv.nl}{{r.m.m.griffon@math.leidenuniv.nl}}

\end{document}